\def\R{\mathbb{R}}
\def\C{\mathbb{C}}
\def\K{\mathbb{K}} 
\def\pr{$\bf{Proof.}$\quad}
\def\fin{\hfill$\square$\\}
   \def\im{{\rm im}\,}   \def\ker{{\rm ker}\,}
     \def\Hom{{\rm Hom}\,}   \def\End{{\rm End}\,}
     \def\der{{\rm der}\,}   \def\coker{{\rm coker}\,}
\newtheorem{theo}{Theorem}
\newtheorem{defi}{Definition}
\newtheorem{ex}{Example}
\newtheorem{rem}{Remark}
\newtheorem{prop}{Proposition}
\newtheorem{cor}{Corollary}
\newtheorem{lem}{Lemma}
\begin{document}
\title{On the string Lie algebra} 

\author{Salim Rivi\`ere\\
        Universit\'e du Luxembourg\\
        salim.riviere@uni.lu\\
\and    Friedrich Wagemann \\
        Universit\'e de Nantes\\
        wagemann@math.univ-nantes.fr}

\maketitle

\begin{abstract}
We construct an abelian representative for the crossed module associated 
to the string Lie algebra. We show how to apply this construction in order 
to define quasi-invariant tensors which serve to categorify the infinitesimal
braiding on the category of ${\mathfrak g}$-modules given by an r-matrix, 
following Cirio-Martins \cite{CirMar}.  
\end{abstract}

\section*{Introduction}

Given a simple compact simply connected Lie group $G$, the string group ${\rm Str}(G)$ is 
in some sense its 3-connected cover. There are various models for ${\rm Str}(G)$ in the 
literature, for example as a weak 2-group with finite-dimensional components or as
a strict 2-group with infinite-dimensional components \cite{BCSS}. But strict 2-groups
are nothing else than crossed modules, thus the string group and its Lie algebra, the 
{\it string Lie algebra}, can be represented by crossed modules, see for example
\cite{Nee} for some model. Crossed modules of Lie algebras are closely related to 
3-cohomology classes, and the 3-cohomology class corresponding to the string Lie 
algebra is the class represented by the {\it Cartan cocycle} 
$\kappa([-,-],)\in Z^3({\mathfrak g},\C)$ for a simple complex Lie algebra ${\mathfrak g}$
and its symmetric, non-degenerate bilinear Killing form $\kappa$. 
Introductive material on crossed modules is given in Section 1, and material on the string 
Lie algebra in Section 3 (closely following \cite{Nee}). 

We want to emphasize in this article that it is interesting to find {\it abelian 
representatives} for a given equivalence class of crossed modules. These are crossed 
modules of the form $\mu:V_2\to V_3\times_{\alpha}{\mathfrak g}$ which are spliced together
from a short exact sequence of ${\mathfrak g}$-modules 
$$0\to V_1\to V_2\stackrel{d}{\to} V_3\to 0$$
and an abelian extension $V_3\times_{\alpha}{\mathfrak g}$ of ${\mathfrak g}$ by $V_3$
via a 2-cocycle $\alpha$. We will almost always consider crossed modules where $V_1=\C$,
the trivial ${\mathfrak g}$-module. The construction of abelian representatives stems 
back to \cite{Wag} and is recalled here in Section 2.  

Unfortunately, the explicit expression for the 2-cocycle $\alpha$ which the second 
author presented in 
\cite{Wag} in order to construct an abelian representative for the Cartan cocycle 
corresponding to a general simple complex Lie algebra is wrong. We thank Lucio Cirio 
for this observation. In the present paper, we propose a new expression for $\alpha$
(see Equation \ref{eq:formule-definition-alpha}) which is kind of abstract, but
which works for any constant valued 3-cocycle. The formula relies on an explicit formula 
for the homotopy contracting the Koszul complex and appeared first (to our knowledge) 
in the first author's doctoral thesis. 
In order to render its expression 
explicit, one has to compute the Eulerian idempotent on the corresponding universal
enveloping algebra, and this is in general a difficult problem. We illustrate this 
at the end of Section 4. 

Cirio and Martins applied the abelian representative of \cite{Wag} for 
${\mathfrak s}{\mathfrak l}_2(\C)$ which is given in terms of vector fields 
and formal differential forms to the problem of categorifying 
the infinitesimal braiding in the category of 
${\mathfrak g}$-modules which is associated to an r-matrix for ${\mathfrak g}$, i.e. a 
symmetric ${\mathfrak g}$-invariant tensor $r\in{\mathfrak g}\otimes{\mathfrak g}$.
They showed that {\it quasi-invariant tensors} (see Definition \ref{quasiinvariant_tensor})
lead to such a braiding. 
As an application of our theory, we show in Section 5 how to construct quasi-invariant tensors 
for a wide class of Lie algebras, including all simple complex Lie algebra, and therefore
extending Cirio and Martins' construction.

For this, we had to formalize one compatibility condition which was automatically satisfied 
in Cirio-Martins' framework. Let $\mu:V_2\to V_3\times_{\alpha}{\mathfrak g}$ be an abelian
representative for a crossed module corresponding to a trivial-valued 3-cocycle $\gamma$. 
Take a linear section $Q$ of the map $d:V_2\to V_3$, and lift $\alpha$ to 
$w:=Q\circ\alpha$. Take its Chevalley-Eilenberg coboundary $d_{CE}w=:\Phi$. 
On the other hand, consider a ${\mathfrak g}$-invariant symmetric 2-tensor 
$r=\sum_is_i\otimes t_i$. 
Then the condition reads for all $X,Y\in{\mathfrak g}$:
 $$\sum_i\Phi(s_i,X,Y)\otimes t_i\,=\,1_{V_2}\otimes[X,Y],\,\,\,\,{\rm and}\,\,\,\,     
\sum_is_i\otimes\Phi(t_i,X,Y)\,=\,[X,Y]\otimes 1_{V_2},$$
where $1_{V_2}$ generates the image of $V_1=\C$ in $V_2$. 

\begin{theo}
Suppose that in the above situation the above compatibility condition is satisfied. 

Then the triple $(\overline{r},\xi,c)$ (defined in Section \ref{construction}) 
is a quasi-invariant tensor for the crossed module 
$\mu:V_2\to V_3\times_{\alpha}{\mathfrak g}$.
\end{theo}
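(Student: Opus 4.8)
The plan is to reduce the statement to the verification of the finitely many identities in Definition~\ref{quasiinvariant_tensor} and to feed the compatibility hypothesis into exactly one of them. So the first step is to write out, side by side, what a quasi-invariant tensor for $\mu\colon V_2\to V_3\ti_\al{\mathfrak g}$ is -- a triple $(\ov r,\xi,c)$, with $\ov r$ in the square of $\widehat{\mathfrak g}:=V_3\ti_\al{\mathfrak g}$, with $\xi$ built from the components $V_2\otimes\widehat{\mathfrak g}$ and $\widehat{\mathfrak g}\otimes V_2$ and witnessing that $\ov r$ is $\mathrm{ad}$-invariant up to a $\mu$-boundary, and with $c$ in $V_2\otimes V_2$ controlling the failure of $\xi$ to be invariant, all subject in addition to a flip-symmetry condition and, following Cirio--Martins \cite{CirMar}, an infinitesimal $2$-Yang--Baxter-type relation -- together with the explicit formulas of Section~\ref{construction}: $\ov r$ is $r=\sum_i s_i\otimes t_i$ pushed into $\widehat{\mathfrak g}^{\otimes 2}$ along the linear splitting ${\mathfrak g}\hookrightarrow\widehat{\mathfrak g}$, while $\xi$ and $c$ come from inserting the lift $w=Q\circ\al$, and hence $\Phi=d_{CE}w$, into one, respectively both, of the tensor legs of $r$. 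Once these are aligned, the whole argument is formal manipulation with Sweedler notation for $r$, the bracket of $\widehat{\mathfrak g}$, and the extended action of $\widehat{\mathfrak g}$ on tensor products; the nontrivial input occurs in a single place.

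Second, I would clear the easy conditions. Flip-symmetry of $\ov r$ is inherited from that of $r$, and the matching (anti)symmetry of $\xi$ and $c$ is built into the symmetric way $w$ and $\Phi$ are inserted. For ``$\mathrm{ad}$-invariance up to a $\mu$-boundary'' one computes, for $Z\in{\mathfrak g}$, the bracket $\mathrm{ad}_{(0,Z)}(\ov r)$ in $\widehat{\mathfrak g}^{\otimes 2}$; its ${\mathfrak g}$-valued part vanishes by ${\mathfrak g}$-invariance of $r$, and what remains is $\sum_i(\al(Z,s_i),0)\otimes(0,t_i)+\sum_i(0,s_i)\otimes(\al(Z,t_i),0)$, which equals $(\mu\otimes\mathrm{id})$ and $(\mathrm{id}\otimes\mu)$ applied to $\sum_i w(Z,s_i)\otimes(0,t_i)$ and $\sum_i(0,s_i)\otimes w(Z,t_i)$, because $\mu\colon v\mapsto(dv,0)$ and $d\circ w=\al$; this is exactly the defining relation for $\xi$. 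The remaining first-order compatibilities, involving the $V_3$-leg of the $\widehat{\mathfrak g}$-action (which is trivial on $V_2$ by the Peiffer identity) and the cocycle identity for $\al\in Z^2({\mathfrak g},V_3)$, reduce once more to $d\circ w=\al$ and to ${\mathfrak g}$-invariance of $r$. None of this is an obstacle.

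Third, and this is the heart of the matter, comes the top coherence condition for $c$, and here the hypothesis is used. Measuring the failure of $\xi$ to be invariant one must differentiate a $w$-term, and $\Phi=d_{CE}w$ rewrites $Z\cdot w(X,Y)-w([Z,X],Y)-w(X,[Z,Y])$ as $\Phi(Z,X,Y)$; letting the first argument run through a leg of $r$ and summing yields exactly $\sum_i\Phi(s_i,X,Y)\otimes t_i$ and $\sum_i s_i\otimes\Phi(t_i,X,Y)$. By the compatibility hypothesis these are $1_{V_2}\otimes[X,Y]$ and $[X,Y]\otimes 1_{V_2}$ -- precisely the term built from the ${\mathfrak g}$-bracket and the distinguished generator of the image of $V_1=\C$ that stands on the right-hand side of the coherence axiom -- and the residue of the computation is absorbed into $c$. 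I expect the real difficulty to be the bookkeeping here: one has to check that the choice of section $Q$ hidden in $w$, the $\al$-twist in the bracket of $\widehat{\mathfrak g}$, and the extended action of $\widehat{\mathfrak g}$ on $\widehat{\mathfrak g}^{\otimes 2}$ all cancel after contraction with the invariant tensor $r$, so that only the central terms pinned down by the compatibility condition survive. Finally, the infinitesimal $2$-Yang--Baxter-type relation reduces, on the ${\mathfrak g}$-part, to the classical infinitesimal braiding identity for the invariant $r$, and on the $V_3$-part to $d\circ w=\al$ and the cocycle identity for $\al$; I would check it last, as it involves nothing beyond the manipulations already set up.
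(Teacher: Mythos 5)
Your overall strategy coincides with the paper's: verify the axioms of Definition \ref{quasiinvariant_tensor} one by one, with conditions \emph{(a)} and \emph{(b)} following from the ${\mathfrak g}$-invariance of $r$, the relation $d\circ\omega=\alpha$ and the triviality of the action on $\ker(d)=\C$, and with the compatibility condition (\ref{compatibility}) entering exactly once, in the cocycle condition \emph{(c)} for $\xi$, through the appearance of $\Phi=d_{CE}\omega$. That localization of the hypothesis is correct. However, there are two genuine problems. First, you are verifying the wrong axioms: in Definition \ref{quasiinvariant_tensor} the element $c$ is an ${\mathfrak n}$-invariant element of $\ker(\mu)\subset V_2$ (not an element of $V_2\otimes V_2$ measuring the failure of invariance of $\xi$), it occurs in none of the conditions \emph{(a)}--\emph{(c)}, and there is no ``infinitesimal $2$-Yang--Baxter'' axiom to check --- that relation is a consequence established by Cirio--Martins, not part of the definition. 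The identity that actually has to close up is $\xi([X,Y])=X\cdot\xi(Y)-Y\cdot\xi(X)$, and what cancels the terms $1_{V_2}\otimes\overline{[X,Y]}+\overline{[X,Y]}\otimes 1_{V_2}$ produced via (\ref{compatibility}) is the summand $C$ of $\xi=-\xi_0-C$, not the element $c$.

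Second, and more seriously, you attribute all the remaining cancellation in condition \emph{(c)} to ``contraction with the invariant tensor $r$''. That only disposes of the genuine bracket terms. The terms carrying the cocycle twist of $V_3\times_\alpha{\mathfrak g}$, such as $\omega(s_i,Y)\otimes\overline{\alpha(X,t_i)}$ against $\overline{\alpha(s_i,Y)}\otimes\omega(X,t_i)$, do not cancel in $(V_2\otimes{\mathfrak n})\oplus({\mathfrak n}\otimes V_2)$ at all; they cancel only after passing to the quotient ${\mathcal U}^{(2)}$, by Lemma \ref{(66)} ($\beta(a)b=a\beta(b)$), i.e.\ by the defining relation $x\mu(u)yvz=xuy\mu(v)z$. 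The same quotient is needed to identify $1_{V_2}\otimes[\overline{X},\overline{Y}]$ with $1_{V_2}\otimes\overline{[X,Y]}$ when matching the $C$-contribution against the output of (\ref{compatibility}). Your proposal never invokes ${\mathcal U}^{(2)}$ or Lemma \ref{(66)}, so the ``bookkeeping'' you defer would in fact fail as stated; this is the one additional idea, beyond the compatibility condition, that the proof needs. (The mixed case $\xi([\overline{X},\overline{h}])$ with $h\in V_3$, which the paper treats separately and which again uses Lemma \ref{(66)} together with the near-equivariance of the section $Q$, is also missing from your outline.)
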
  

We show large classes of examples where this condition is satisfied, in particular 
it is satisfied for all (finite-dimensional) simple complex Lie algebras. 

\vspace{1cm}

\noindent{\bf Acknowledgements:}

FW thanks Lucio Cirio for various useful discussion on the matter of this paper.

\section{Crossed modules}

Fix a field $\K$. Mainly we will consider $\K=\C$. 

\begin{defi}   \label{def_crmod}
A crossed module of Lie algebras is a homomorphism of
Lie algebras $\mu:\mathfrak{m}\to\mathfrak{n}$ together with 
an action of $\mathfrak{n}$ on $\mathfrak{m}$ by derivations, 
denoted $m\mapsto n\cdot m$, such that

(a) $\mu(n\cdot m)\,=\,[n,\mu(m)]$ for all $n\in\mathfrak{n}$
and all $m\in\mathfrak{m}$,

(b) $\mu(m)\cdot m'\,=\,[m,m']$ for all $m,m'\in\mathfrak{m}$.
\end{defi}

\begin{rem}    \label{exact_sequence}
To each crossed module of Lie algebras $\mu:\mathfrak{m}\to\mathfrak{n}$, 
one associates a four term exact sequence
\begin{displaymath}
0\to
V\stackrel{i}{\to}\mathfrak{m}\stackrel{\mu}{\to}
\mathfrak{n}\stackrel{\pi}{\to}\mathfrak{g}\to
0
\end{displaymath}
where $\ker(\mu)\,=:\,V$ and $\mathfrak{g}\,:=\,\coker(\mu)$.
\end{rem}

\begin{rem}
\begin{itemize}
\item By (a), $\mathfrak{g}$ is a Lie algebra, 
because $\im\,(\mu)$ is an ideal. 
\item By (b), $V$ is a central Lie subalgebra of 
$\mathfrak{m}$, and in particular abelian.
\item By (a), the action of $\mathfrak{n}$ on $\mathfrak{m}$ 
induces a structure of a $\mathfrak{g}$-module on $V$. 
\item Note that in general
$\mathfrak{m}$ and $\mathfrak{n}$ are not $\mathfrak{g}$-modules.
\end{itemize}
\end{rem}
 
Let us recall how to associate to a crossed module a $3$-cocycle of 
$\mathfrak{g}$ with values in $V$. Recall the exact sequence from Remark \ref{exact_sequence}:
\begin{displaymath}
0\to
V\stackrel{i}{\to}\mathfrak{m}\stackrel{\mu}{\to}
\mathfrak{n}\stackrel{\pi}{\to}\mathfrak{g}\to
0
\end{displaymath}
The first step is to take a linear section $\rho$ of $\pi$ and
to compute the default of $\rho$ to be a Lie algebra homomorphism,
i.e.
\begin{displaymath}
\alpha(x_1,x_2):=[\rho(x_1),\rho(x_2)]-\rho([x_1,x_2]).
\end{displaymath}
Here, $x_1,x_2\in\mathfrak{g}$. $\alpha$ is bilinear and skewsymmetric 
in $x_1,x_2$. We have obviously $\pi(\alpha(x_1,x_2))=0$, because $\pi$ is a Lie
algebra homomorphism, so $\alpha(x_1,x_2)\in \im(\mu)=\ker(\pi)$. By exactness,
there exists $\beta(x_1,x_2)\in\mathfrak{m}$ such that
\begin{displaymath}
\mu(\beta(x_1,x_2))\,=\,\alpha(x_1,x_2).
\end{displaymath}
Choosing a linear section $\sigma$ on $\im(\mu)$, one can choose $\beta$ as 
\begin{displaymath}
\beta(x_1,x_2)\,=\,\sigma(\alpha(x_1,x_2))
\end{displaymath}
showing that we can suppose $\beta$ bilinear and skewsymmetric in $x_1,x_2$.

Now, one computes $\mu(d^{\mathfrak{m}}\beta(x_1,x_2,x_3))=0$, where 
$d^{\mathfrak{m}}$ is the formal expression of the Lie algebra coboundary operator, 
while the map $\eta\circ\rho$ is not an action in general.

Choosing a linear section $\tau$ on $i(V)=\ker(\mu)$, one can choose 
$\gamma$ to be 
\begin{equation}  \label{def_3_cocycle}  
\gamma\,:=\,\tau\circ d^{\mathfrak{m}}\beta
\end{equation} 
gaining that $\gamma$ is 
also trilinear and skewsymmetric in $x_1,x_2$ and $x_3$. One computes 
that $\gamma$ is a $3$-cocycle of $\mathfrak{g}$ with values in $V$. 

On the other hand, one may introduce an equivalence 
relation on the class of crossed modules and an abelian group structure. 
With this structure at hand, the above construction of the cocycle $\gamma$
leads to the following theorem.  

\begin{theo}[Gerstenhaber]
There is an isomorphism of abelian groups
\begin{displaymath}
b:{\rm crmod}(\mathfrak{g},V)\,\cong\,H^3(\mathfrak{g},V).
\end{displaymath}
\end{theo}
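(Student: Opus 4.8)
The plan is to exhibit the map $b$ explicitly, check it is well-defined, and then build an inverse. First I would verify that the cochain $\gamma = \tau\circ d^{\mathfrak m}\beta$ constructed above is indeed a $3$-cocycle in $Z^3(\mathfrak g,V)$: this is a direct computation using $(d^{\mathfrak m})^2=0$ together with the crossed module axioms (a) and (b) from Definition \ref{def_crmod}, which are precisely what is needed to commute $\tau$ and $\mu$ past the coboundary and to identify the $\mathfrak g$-action on $V$ with the $\mathfrak n$-action restricted along $i$. Next I would show that the class $[\gamma]\in H^3(\mathfrak g,V)$ does not depend on the choices of the linear sections $\rho$, $\sigma$, $\tau$: changing $\rho$ by a linear map $\mathfrak g\to\ker\pi = \im\mu$ changes $\alpha$ by a coboundary in the $\im\mu$-valued complex, and chasing this through the definitions changes $\gamma$ by $d_{CE}$ of a $2$-cochain; similarly for $\sigma$ and $\tau$. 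The same argument, applied to an elementary equivalence of crossed modules, shows $b$ descends to ${\rm crmod}(\mathfrak g,V)$.

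Then I would check that $b$ is a homomorphism of abelian groups. This requires spelling out the Baer-sum-type addition on ${\rm crmod}(\mathfrak g,V)$ (pullback along the diagonal $\mathfrak g\to\mathfrak g\times\mathfrak g$, pushout along the sum $V\times V\to V$, of the external direct sum of two crossed modules) and verifying that the cocycle of the sum is, up to coboundary, the sum of the cocycles; this is bookkeeping once the sections for the sum are chosen to be induced from the sections of the summands. One also checks that the split crossed module $V\xrightarrow{\;0\;}\mathfrak g\ltimes V$ maps to $0$, giving the unit.

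The main work, and the part I expect to be the real obstacle, is surjectivity and injectivity of $b$, i.e.\ constructing the inverse. Given a cocycle $\gamma\in Z^3(\mathfrak g,V)$, I would build a crossed module realizing it: take $\mathfrak n$ to be an abelian (or more generally a suitable) extension encoding $\gamma$ in degree, and $\mathfrak m$ built from $V$ and the $2$-cochains, with $\mu$ the natural map; the bracket on $\mathfrak m$ and the $\mathfrak n$-action are defined by formulas whose Jacobi identity and crossed module axioms reduce exactly to the cocycle condition $d_{CE}\gamma=0$. This is the construction underlying the notion of abelian representative recalled in Section 2, so in fact I would quote that section: an abelian representative $\mu:V_2\to V_3\times_\alpha\mathfrak g$ spliced from $0\to V\to V_2\xrightarrow{d}V_3\to 0$ and a $2$-cocycle $\alpha$ provides, for any given $\gamma$, a crossed module with $b(\text{class}) = [\gamma]$, giving surjectivity. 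For injectivity, if $b(\mu) = b(\mu') = [\gamma]$ one compares both crossed modules to the abelian representative built from $\gamma$: a cocycle-level identity $\gamma - \gamma' = d_{CE}\psi$ is promoted to an explicit elementary equivalence of crossed modules, with $\psi$ controlling the needed isomorphism on the middle terms. The delicate point throughout is that several non-canonical lifts ($\rho,\sigma,\tau$, and their analogues on the morphism side) have to be juggled simultaneously while keeping every map a Lie algebra morphism or a derivation; organizing these choices compatibly, rather than any single computation, is where the difficulty lies.
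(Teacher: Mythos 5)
The paper does not actually prove this theorem: it attributes the result to Gerstenhaber, cites \cite{Ger64}, \cite{Ger66} and \cite{ML}, and refers to \cite{Wag} for the computational details. So there is no in-paper proof to compare against line by line. That said, your outline follows exactly the route the surrounding text sets up: the map $b$ is the construction of $\gamma=\tau\circ d^{\mathfrak m}\beta$ given just before the theorem, and surjectivity is precisely what Section~2 delivers (embed $V$ into an injective module $V_2$, use that the connecting homomorphism $\partial:H^2(\mathfrak g,V_3)\to H^3(\mathfrak g,V_1)$ is then an isomorphism, and splice the resulting abelian representative). Your plan is the standard one and is consistent with everything the paper does state.

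Two places where your plan is thinner than it looks. First, you speak of ``a coboundary in the $\im\mu$-valued complex'' when varying $\rho$; but the paper is explicit that $\mathfrak m$ and $\mathfrak n$ are in general \emph{not} $\mathfrak g$-modules, so $d^{\mathfrak m}$ is only a formal expression and the usual coboundary bookkeeping does not apply verbatim --- this is exactly why the cocycle property of $\gamma$ and the independence of choices require the crossed module axioms (a) and (b) and cannot be reduced to a routine chain-complex argument. Second, for injectivity you need to be precise that the equivalence relation on crossed modules is the one \emph{generated} by elementary morphisms (zigzags through a common refinement), since an elementary morphism inducing the identity on $V$ and $\mathfrak g$ need not be invertible; your sentence ``promoted to an explicit elementary equivalence'' hides the fact that one typically has to compare both crossed modules to a third one (e.g.\ the abelian representative) rather than map one directly to the other. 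Neither point is a fatal gap, but both are where the actual work in \cite{Ger64}, \cite{Ger66} and \cite{Wag} lives, and your proposal defers them entirely.
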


A proof of this theorem is (more or less) contained in \cite{Ger64} and \cite{Ger66}.
According to \cite{ML}, we attribute it to Murray Gerstenhaber. 
We refer to \cite{Wag} for further computational details on this theorem. 

The main question we want to address here is how to construct crossed modules which
relate to a given $3$-cocycle under the correspondence of this theorem. 

\section{Construction of crossed modules}

\subsection{Construction using a short exact sequence of modules}

Let us ask the following question: Given a Lie algebra $\mathfrak{g}$, a short exact 
sequence of $\mathfrak{g}$-modules
\begin{equation}    \label{*}
0\to V_1\to V_2\to V_3\to 0
\end{equation}
(regarded as a short exact sequence of abelian Lie algebras) and an abelian extension 
${\mathfrak r}$ of $\mathfrak{g}$ by the abelian Lie algebra $V_3$
\begin{equation}   \label{**}
0\to V_3\to \mathfrak{r}\to \mathfrak{g}\to 0,
\end{equation}
is the Yoneda product of (\ref{*}) and (\ref{**}) a crossed module ? 
In case (\ref{**}) is given by a $2$-cocycle $\alpha$, we use the notation 
$\mathfrak{r}=V_3\times_{\alpha}{\mathfrak g}$.

\begin{theo}  \label{construction_thm}
In the above situation, the Yoneda product of $(\ref{*})$ and $(\ref{**})$, namely
$$0\to V_1\to V_2\to \mathfrak{r}=V_3\times_{\alpha}{\mathfrak g}\to \mathfrak{g}\to 0,$$
is a crossed module, 
the associated $3$-cocycle of which is (cohomologuous to) the image 
$\partial\alpha$ of $\alpha$,
i.e. the $2$-cocycle defining the central extension $(\ref{**})$, 
under the connecting homomorphism 
$\partial$ in the long exact cohomology sequence associated to $(\ref{*})$.
\end{theo}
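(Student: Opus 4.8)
The plan is to verify directly that the Yoneda splice $0\to V_1\to V_2\stackrel{\mu}{\to}\mathfrak{r}\to\mathfrak{g}\to 0$ carries a crossed module structure, and then to identify its Gerstenhaber class via the explicit cocycle construction recalled in Section 1. First I would write down the data: $\mathfrak{r}=V_3\times_{\alpha}\mathfrak{g}$ has bracket $[(v,x),(w,y)]=(x\cdot w-y\cdot v+\alpha(x,y),[x,y])$, the map $\mu:V_2\to\mathfrak{r}$ is $v_2\mapsto (d(v_2),0)$ where $d:V_2\to V_3$ is the surjection in (\ref{*}), and the action of $\mathfrak{r}$ on $V_2$ is defined by $(v,x)\cdot v_2:=x\cdot v_2$, i.e. the $V_3$-part acts trivially (legitimate since $V_2$ is a $\mathfrak{g}$-module and $\mathfrak{g}=\coker(\mu)$). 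Then axiom (a) of Definition \ref{def_crmod} reads $\mu((v,x)\cdot v_2)=(d(x\cdot v_2),0)=(x\cdot d(v_2),0)$, which must equal $[(v,x),\mu(v_2)]=[(v,x),(d(v_2),0)]=(x\cdot d(v_2),0)$ — this holds because $d$ is $\mathfrak{g}$-equivariant. Axiom (b) reads $\mu(v_2)\cdot v_2'=(d(v_2),0)\cdot v_2'=0\cdot v_2'=0$, which must equal $[v_2,v_2']=0$ since $V_2$ is abelian; so (b) holds trivially. One also checks that the action is by derivations (immediate, as the bracket on $V_2$ is zero and $\mathfrak{g}$ acts by module structure) and that $\ker\mu=\ker d=V_1$, $\coker\mu=\mathfrak{r}/\mathrm{im}(d,0)=\mathfrak{g}$, so the four-term sequence is indeed the one claimed.

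Next I would compute the associated $3$-cocycle following the recipe around (\ref{def_3_cocycle}), making convenient choices of sections. Take $\rho:\mathfrak{g}\to\mathfrak{r}$, $x\mapsto(0,x)$, the standard linear section of $\pi:\mathfrak{r}\to\mathfrak{g}$. Then its default of multiplicativity is $\alpha^{\mathfrak{r}}(x_1,x_2):=[\rho(x_1),\rho(x_2)]-\rho([x_1,x_2])=(\alpha(x_1,x_2),0)$. Since $\mu$ identifies $V_2$ with its image and $\mathrm{im}(\mu)=\{(w,0):w\in V_3\}$, lifting $\alpha^{\mathfrak{r}}$ through $\mu$ means choosing $\beta(x_1,x_2)\in V_2$ with $d(\beta(x_1,x_2))=\alpha(x_1,x_2)$; one does this with a fixed linear section $Q$ of $d$, so $\beta=Q\circ\alpha$. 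The next step of the recipe forms $d^{V_2}\beta$ — the Chevalley--Eilenberg coboundary of the $V_2$-valued $2$-cochain $\beta$ — and, since its values land in $\ker d=V_1$, composes with a section onto $V_1$; but here $\ker d=V_1$ already, so $\gamma:=d^{V_2}(Q\circ\alpha)$ is literally a $V_1$-valued $3$-cochain. The key point is then that $d^{V_2}\beta=d^{V_2}(Q\circ\alpha)$, when we remember $d\circ Q=\mathrm{id}_{V_3}$ and $d\circ(d^{V_2}(Q\circ\alpha))=d^{V_3}(d\circ Q\circ\alpha)=d^{V_3}\alpha=0$ (using that $\alpha$ is a $2$-cocycle for the $V_3$-valued cohomology), represents exactly the connecting homomorphism $\partial:H^2(\mathfrak{g},V_3)\to H^3(\mathfrak{g},V_1)$ applied to $[\alpha]$: this is precisely the snake-lemma / dimension-shift description of $\partial$ at the cochain level, "lift along $d$, apply $d^{V_2}$, land in $V_1$''. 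Hence $\gamma$ is cohomologous to $\partial\alpha$, as claimed.

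I expect the main obstacle to be purely bookkeeping rather than conceptual: one must be careful that the recipe of Section 1 is stated for a general crossed module with several intermediate sections ($\rho$, $\sigma$, $\tau$) and several coboundary operators, and one has to check that in the Yoneda-splice situation these collapse correctly — in particular that the section $\sigma$ on $\mathrm{im}(\mu)$ and the section $\tau$ on $i(V)$ can be absorbed into the single choice of $Q$, and that the ``formal Lie algebra coboundary'' $d^{\mathfrak m}$ computed inside $\mathfrak r$ restricts to the genuine $V_2$-valued Chevalley--Eilenberg differential when evaluated on $\beta$ (this uses that the $V_3$-part of $\mathfrak r$ acts trivially on $V_2$, so no extra $\alpha$-terms intrude). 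Once these identifications are pinned down, the equality of $\gamma$ with the image of $\alpha$ under $\partial$ is exactly the standard cochain-level formula for a connecting homomorphism in a long exact sequence, so I would either cite that or include the one-line verification $d^{V_3}\alpha=0 \Rightarrow d^{V_2}(Q\alpha)\in C^3(\mathfrak g,V_1)$ and $[\,d^{V_2}(Q\alpha)\,]=\partial[\alpha]$. I would close by remarking that changing $Q$ changes $\gamma$ by a coboundary, consistent with the class $\partial[\alpha]$ being well defined, which matches the ``cohomologous to'' in the statement.
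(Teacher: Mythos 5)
Your proposal is correct. Note that the paper itself does not prove Theorem \ref{construction_thm} but defers to the reference \cite{Wag}; your direct verification (trivial crossed-module axioms because $V_2$ and $\im(\mu)=V_3\times\{0\}$ are abelian and the $V_3$-component acts trivially, followed by the observation that with $\rho(x)=(0,x)$ and $\beta=Q\circ\alpha$ the formal coboundary $d^{\mathfrak m}\beta$ becomes the genuine Chevalley--Eilenberg coboundary $d_{CE}(Q\alpha)$, which is exactly the cochain-level description of $\partial[\alpha]$) is precisely the expected argument and supplies the proof the paper omits.
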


This theorem is proved in \cite{Wag}. It is our main device to construct crossed modules
for a given $3$-cohomology class. In fact, any equivalence class of crossed modules 
contains a representative of the above form:

\begin{prop}
Let $\mu:{\mathfrak m}\to{\mathfrak n}$ be a crossed module of Lie algebras. Then there 
exists a crossed module of the above constructed type 
$$\mu_1:V_2\to \mathfrak{r}=V_3\times_{\alpha}{\mathfrak g}$$
which is equivalent to $\mu:{\mathfrak m}\to{\mathfrak n}$.
\end{prop}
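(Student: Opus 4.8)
The plan is to start from the crossed module $\mu:\mathfrak{m}\to\mathfrak{n}$ with its four-term exact sequence $0\to V\stackrel{i}{\to}\mathfrak{m}\stackrel{\mu}{\to}\mathfrak{n}\stackrel{\pi}{\to}\mathfrak{g}\to 0$ from Remark \ref{exact_sequence}, and to massage it into the spliced form by \emph{pushing out} along a suitable map. Concretely, I would set $V_3:=\im(\mu)=\ker(\pi)$ and try to realize $\mathfrak{n}$ itself (or a modification of it) as the abelian extension $\mathfrak{r}=V_3\times_\alpha\mathfrak{g}$: since $V_3$ is an ideal of $\mathfrak{n}$ on which $\mathfrak{n}$, hence $\mathfrak{g}$ via a linear section $\rho$ of $\pi$, acts — and since $V_3$ is abelian because it is a quotient of $\mathfrak{m}$ by the central subalgebra $V$ — the extension $0\to V_3\to\mathfrak{n}\to\mathfrak{g}\to 0$ \emph{is} already an abelian extension, classified by a 2-cocycle $\alpha\in Z^2(\mathfrak{g},V_3)$, namely the default $\alpha(x_1,x_2)=[\rho(x_1),\rho(x_2)]-\rho([x_1,x_2])$ appearing in the cocycle construction recalled in Section 1. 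So one candidate is already $\mathfrak{r}=\mathfrak{n}=V_3\times_\alpha\mathfrak{g}$, and $V_2:=\mathfrak{m}$, $V_1:=V$, with the first three terms $0\to V\to\mathfrak{m}\to V_3\to 0$ being exactly the short exact sequence of $\mathfrak{g}$-modules one needs (it is a sequence of $\mathfrak{g}$-modules precisely because $V_3=\im\mu$ is an ideal and the $\mathfrak{n}$-action descends). Then the Yoneda splice of $0\to V\to\mathfrak{m}\to V_3\to 0$ with $0\to V_3\to\mathfrak{n}\to\mathfrak{g}\to 0$ recovers the original four-term sequence, and by Theorem \ref{construction_thm} this splice carries a crossed module structure.

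The remaining — and genuinely substantive — point is that this reconstructed crossed module is \emph{equivalent} to the original $\mu:\mathfrak{m}\to\mathfrak{n}$, not merely isomorphic as a four-term exact sequence. Here I would invoke Gerstenhaber's theorem: both crossed modules sit over the same $\mathfrak{g}$ and the same $V=\ker\mu$, and under the isomorphism $b:\mathrm{crmod}(\mathfrak{g},V)\cong H^3(\mathfrak{g},V)$ it suffices to check that they have the same associated 3-cohomology class. For the original crossed module this class is $[\gamma]$ with $\gamma=\tau\circ d^{\mathfrak{m}}\beta$ from Equation \eqref{def_3_cocycle}; for the spliced one Theorem \ref{construction_thm} identifies the class as $[\partial\alpha]$, the image of $\alpha$ under the connecting homomorphism $\partial:H^2(\mathfrak{g},V_3)\to H^3(\mathfrak{g},V)$ of the long exact sequence of $0\to V\to\mathfrak{m}\to V_3\to 0$. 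So I would verify that $\gamma$ and $\partial\alpha$ agree in $H^3(\mathfrak{g},V)$ by unwinding the definition of the connecting homomorphism: lift $\alpha$ through $\mu:\mathfrak{m}\to V_3$ to $\beta$, apply $d^{\mathfrak{m}}$, and observe it lands in $i(V)$; this is \emph{verbatim} the construction of $\gamma$ in Section 1. Hence the two classes coincide, and Gerstenhaber's theorem gives the desired equivalence.

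There is one technical subtlety I should be careful about, and I expect it to be the main obstacle: whether $\mathfrak{n}$ literally has the form $V_3\times_\alpha\mathfrak{g}$ \emph{as a Lie algebra} depends on choosing the right data, and more importantly one must check that the crossed module structure (the $\mathfrak{r}$-action on $V_2$ and the two Peiffer-type identities of Definition \ref{def_crmod}) on the spliced object agrees with the one transported from $\mu:\mathfrak{m}\to\mathfrak{n}$, so that the equivalence is an equivalence \emph{of crossed modules} and not just a coincidence of 3-classes read off abstractly. In fact, since $b$ is an isomorphism, equality of 3-classes already \emph{forces} equivalence, so this subtlety is absorbed into the appeal to Gerstenhaber's theorem; the only thing one truly has to nail down is that the section choices ($\rho$ of $\pi$, $\sigma$ of $\mu$, $\tau$ of $i$) used to build $\alpha$ and then $\partial\alpha$ are compatible with those used to build $\gamma$, which is a matter of bookkeeping rather than a real difficulty. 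I would therefore organize the proof as: (1) extract $V_1=V$, $V_2=\mathfrak{m}$, $V_3=\im\mu$ and the short exact sequence \eqref{*}; (2) exhibit $\mathfrak{n}=V_3\times_\alpha\mathfrak{g}$ as an abelian extension \eqref{**} via $\alpha=$ the default-of-$\rho$ cocycle; (3) note the Yoneda splice reproduces the original four-term sequence, so Theorem \ref{construction_thm} applies and gives a crossed module $\mu_1:V_2\to\mathfrak{r}$; (4) compute its 3-class as $[\partial\alpha]$ and identify it with $[\gamma]$ by unwinding $\partial$; (5) conclude equivalence via Gerstenhaber's isomorphism $b$.
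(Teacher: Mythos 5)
There is a genuine gap, and it sits at the very first step of your plan. You set $V_2:=\mathfrak{m}$ and $V_3:=\im(\mu)$, and you justify this by claiming (i) that $V_3$ is abelian ``because it is a quotient of $\mathfrak{m}$ by the central subalgebra $V$'' and (ii) that $0\to V\to\mathfrak{m}\to V_3\to 0$ is a sequence of $\mathfrak{g}$-modules. Both claims fail in general. A quotient of a Lie algebra by a central ideal is abelian only when the derived algebra is contained in that ideal; here $\im(\mu)\cong\mathfrak{m}/V$ is abelian iff $[\im\mu,\im\mu]=0$, which is a genuine restriction (take $\mu:\mathfrak{h}\hookrightarrow\mathfrak{n}$ the inclusion of a non-abelian ideal with the adjoint action: this is a crossed module with $V=0$ and $\im\mu=\mathfrak{h}$ non-abelian). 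Consequently $0\to V_3\to\mathfrak{n}\to\mathfrak{g}\to 0$ is not an abelian extension and cannot be written as $V_3\times_\alpha\mathfrak{g}$ for a $2$-cocycle $\alpha\in Z^2(\mathfrak{g},V_3)$. Likewise, the paper explicitly remarks after Definition \ref{def_crmod} that in general $\mathfrak{m}$ and $\mathfrak{n}$ are \emph{not} $\mathfrak{g}$-modules: the $\mathfrak{n}$-action on $\mathfrak{m}$ does not descend to $\mathfrak{g}=\mathfrak{n}/\im\mu$ because $\im\mu$ acts on $\mathfrak{m}$ by $\mu(m)\cdot m'=[m,m']$, which is nonzero in general. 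So the data you feed into Theorem \ref{construction_thm} does not satisfy its hypotheses, and steps (1)--(3) of your outline collapse.

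The paper's proof sidesteps this entirely: it does not try to reuse $\mathfrak{m}$ and $\mathfrak{n}$ at all. It takes the class $[\gamma]\in H^3(\mathfrak{g},V)$ given by Gerstenhaber's theorem, embeds $V=V_1$ into an \emph{injective} $\mathfrak{g}$-module $V_2$ with quotient $V_3$ (these are honest $\mathfrak{g}$-modules, viewed as abelian Lie algebras, by construction), uses injectivity of $V_2$ to see that the connecting homomorphism $\partial:H^2(\mathfrak{g},V_3)\to H^3(\mathfrak{g},V_1)$ is an isomorphism, chooses $[\alpha]:=\partial^{-1}[\gamma]$, and then applies Theorem \ref{construction_thm}. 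Your step (5) -- concluding equivalence from equality of $3$-classes via the isomorphism $b$ -- is exactly the right final move and matches the paper; but the representative you propose to compare classes with does not exist as a crossed module of the required abelian type. If you want to salvage your idea of staying close to the original $\mathfrak{m}$ and $\mathfrak{n}$, you would first have to replace $\mu$ by an equivalent crossed module in which $\im\mu$ is abelian and acts trivially on the kernel of the projection, which is essentially what the injective-embedding construction accomplishes in one stroke.
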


\pr To $\mu:{\mathfrak m}\to{\mathfrak n}$ belongs an exact sequence
\begin{displaymath}
0\to
V\stackrel{i}{\to}\mathfrak{m}\stackrel{\mu}{\to}
\mathfrak{n}\stackrel{\pi}{\to}\mathfrak{g}\to
0
\end{displaymath}
with $\ker(\mu)\,=:\,V$ and $\mathfrak{g}\,:=\,\coker(\mu)$, and a cohomology class 
$[\gamma]\in H^3(\mathfrak{g},V)$ by Gerstenhaber's theorem. Now, embed the 
$\mathfrak{g}$-module $V=V_1$ into an injective $\mathfrak{g}$-module $V_2$, 
and form the quotient $\mathfrak{g}$-module $V_3$ yielding an exact sequence
$$0\to V_1\to V_2\to V_3\to 0.$$
As $V_2$ is injective, the connecting homomorphism
$$\partial:H^2(\mathfrak{g},V_3)\to H^3(\mathfrak{g},V_1)$$
is an isomorphism, thus the class $[\gamma]$ has a preimage $[\alpha]\in 
H^2(\mathfrak{g},V_3)$. By Theorem \ref{construction_thm}, the data defines
a crossed module of the desired type.\fin 

Having seen that we can always find {\it some} $2$-cocycle $\alpha$ such that 
the above glued crossed module represents a given cohomology class, 
the only difficulty in this approach is to find a 
{\it meaningful} $2$-cocycle $\alpha$ associated to a given crossed module.

\begin{defi}
Given an equivalence class of crossed modules $[\mu:{\mathfrak m}\to{\mathfrak n}]
\in{\rm crmod}({\mathfrak g},V)$ (or, equivalently, a class 
$[\gamma]\in H^3({\mathfrak g},V)$), a crossed module of the form 
$$\mu':V_2\to V_3\times_{\alpha}{\mathfrak g}$$
for some ${\mathfrak g}$-modules $V_2$ and $V_3$ and some 2-cocycle 
$\alpha\in Z^2({\mathfrak g},V_3)$ which is equivalent to $\mu$  is called 
an abelian representative of the class $[\mu:{\mathfrak m}\to{\mathfrak n}]$.
\end{defi}   

\subsection{Construction from a central extension of an ideal}

Following Neeb \cite{Nee}, we look in this section at a crossed module of 
Lie algebras as the data of a general extension of Lie algebras
\begin{equation} \label{general_extension}
0\to {\mathfrak l}\stackrel{\iota}{\to}\widehat{\mathfrak g}\stackrel{\pi}{\to} 
{\mathfrak g}\to 0,
\end{equation}
together with a central extension 
$$0\to {\mathfrak z}\to\widehat{\mathfrak l}\stackrel{p}{\to}{\mathfrak l}
\to 0$$
of the ideal ${\mathfrak l}$ of the extension (\ref{general_extension}).

If we want that the given general extension and central extension of 
${\mathfrak l}$ form a crossed module 
$$\mu:=\widehat{\mathfrak l}\stackrel{p}{\to}{\mathfrak l}\stackrel{\iota}{\to}
\widehat{\mathfrak g},$$
they have to be compatible in the sense that the action of ${\mathfrak l}$ on 
$\widehat{\mathfrak l}$ extends 
to an action of $\widehat{\mathfrak g}$ on $\widehat{\mathfrak l}$ satisfying 
the requirements of Definition \ref{def_crmod}.

In the following, we will use this slightly different framework to derive 
once again the cohomology class $[\gamma]$ associated to a crossed module,
i.e. in the following, we assume the data of the two extensions together with
the extension of the action to form a crossed module. In particular, there 
is a fixed action of ${\mathfrak g}$ on ${\mathfrak z}$.    

The Lie bracket on the central extension $\widehat{\mathfrak l}$ can be given 
in a very explicit way. Namely, we may assume 
$\widehat{\mathfrak l}={\mathfrak z}\oplus_{\omega}{\mathfrak l}$, meaning
that the bracket reads as
$$[(z_1,l_1),(z_2,l_2)]=(\omega(l_1,l_2),[l_1,l_2]).$$ 
Here $\omega\in Z^2({\mathfrak l},{\mathfrak z})$ is the $2$-cocycle defining
the central extension. Note that ${\mathfrak l}$ acts trivially on 
${\mathfrak z}$, and the given ${\mathfrak g}$-action on ${\mathfrak z}$ 
extends trivially to a $\widehat{\mathfrak g}$-action, still denoted
$z\mapsto x\cdot z$.  

Now this $\widehat{\mathfrak g}$-action on ${\mathfrak z}$ is supposed to
extend to an action on $\widehat{\mathfrak l}$, denoted by
\begin{equation}  \label{definition_of_theta}
x\cdot (z,l)= (x\cdot z + \theta(x,l),[x,l])
\end{equation}
with some linear map $\theta:\widehat{\mathfrak g}\oplus{\mathfrak l}\to
{\mathfrak z}$. Property {\it (b)} of Definition \ref{def_crmod} implies
for $l_1\in{\mathfrak l}\subset\widehat{\mathfrak g}$
$$l_1\cdot (z,l_2)=[(0,l_1),(z,l_2)]=(\omega(l_1,l_2),[l_1,l_2]),$$
meaning that $\theta|_{{\mathfrak l}\times{\mathfrak l}}=\omega$. 

\begin{lem} \label{lemma_conditions_on_theta}
Fix an element $x\in\widehat{\mathfrak g}$.
\begin{enumerate}
\item[(a)] A linear map $\theta_x\in\Hom({\mathfrak l},{\mathfrak z})$ defines a
derivation
$$\zeta(x):\widehat{\mathfrak l}\to\widehat{\mathfrak l},\quad(z,l)
\mapsto(x\cdot z+\theta_x(l),[x,l])$$
if and only if $d^{\mathfrak l}\theta_x=x\cdot\omega$, where $d^{\mathfrak l}$ is
the Lie algebra coboundary operator of ${\mathfrak l}$ with values in 
$\Hom({\mathfrak l},{\mathfrak z})$ and $\omega$ is seen as
$\omega\in C^1({\mathfrak l},\Hom({\mathfrak l},{\mathfrak z}))$. 

The condition 
$d^{\mathfrak l}\theta_x=x\cdot\omega$ means explicitely for all 
$l,l'\in{\mathfrak l}$
$$x\cdot\omega(l,l')-\omega([x,l],l')-\omega([l',x],l)+\theta_x([l,l'])=0.$$
\item[(b)] Suppose that the linear map  $\theta:\widehat{\mathfrak g}\to
\Hom({\mathfrak l},{\mathfrak z}),x\mapsto\theta_x$ satisfies condition (a).

Then $\theta$ defines a representation of $\widehat{\mathfrak g}$ on 
$\widehat{\mathfrak l}$ via the formula
$$x\cdot(z,l)=(x\cdot z+\theta_x(l),[x,l])$$
if and only if $\theta$ is a $1$-cocycle w.r.t. the natural 
$\widehat{\mathfrak g}$-structure on 
$C^1({\mathfrak l},{\mathfrak z})=\Hom({\mathfrak l},{\mathfrak z})$.

This condition means explicitely for $x,x'\in\widehat{\mathfrak g}$ and 
$l\in{\mathfrak l}$
$$x\cdot\theta(x',l)-x'\cdot\theta(x,l)-\theta([x,x'],l)+\theta(x,[x',l])+
\theta(x',[l,x])=0.$$
\end{enumerate}
\end{lem}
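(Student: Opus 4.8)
The plan is to prove Lemma \ref{lemma_conditions_on_theta} by direct computation, unwinding what it means for the proposed linear maps to be a derivation (part (a)) and then a Lie algebra representation (part (b)), and matching these requirements against the stated cocycle conditions.

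First I would treat part (a). Fix $x\in\widehat{\mathfrak g}$ and consider the candidate map $\zeta(x):(z,l)\mapsto(x\cdot z+\theta_x(l),[x,l])$. This is manifestly linear, so the only thing to check is the Leibniz rule $\zeta(x)[(z_1,l_1),(z_2,l_2)]=[\zeta(x)(z_1,l_1),(z_2,l_2)]+[(z_1,l_1),\zeta(x)(z_2,l_2)]$. I would expand both sides using the explicit bracket $[(z_1,l_1),(z_2,l_2)]=(\omega(l_1,l_2),[l_1,l_2])$. The second component of the Leibniz identity is automatic: it just says $[x,\cdot]$ is a derivation of ${\mathfrak l}$, which holds by the Jacobi identity. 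The first component, after cancelling the terms $x\cdot\omega(l_1,l_2)$-type pieces that come from $x$ acting on ${\mathfrak z}$ versus the derivation property of $x$ on ${\mathfrak l}$ paired with $\omega$, collapses exactly to
$$x\cdot\omega(l_1,l_2)-\omega([x,l_1],l_2)-\omega(l_1,[x,l_2])+\theta_x([l_1,l_2])=0,$$
which is the asserted relation $d^{\mathfrak l}\theta_x=x\cdot\omega$ once one recalls how the coboundary on $C^1({\mathfrak l},\Hom({\mathfrak l},{\mathfrak z}))$ and the action $x\cdot\omega$ are defined (with $\omega([l_2,x],l_1)=-\omega([x,l_2],l_1)=-\omega(l_1,[x,l_2])$ by skewsymmetry of $\omega$, matching the sign in the statement). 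This gives the equivalence in (a), and I would simply record the resulting explicit formula.

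Next, part (b). Assume each $\theta_x$ satisfies (a), so $x\mapsto\zeta(x)$ is a well-defined map $\widehat{\mathfrak g}\to\der(\widehat{\mathfrak l})$. Being a representation means $\zeta([x,x'])=[\zeta(x),\zeta(x')]=\zeta(x)\zeta(x')-\zeta(x')\zeta(x)$ as operators on $\widehat{\mathfrak l}$. I would evaluate both sides on a general element $(z,l)$. The second component reduces to $[[x,x'],l]=[x,[x',l]]-[x',[x,l]]$, which is the Jacobi identity in $\widehat{\mathfrak g}$ (using that ${\mathfrak l}$ is an ideal so $[x,l]\in{\mathfrak l}$). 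For the ${\mathfrak z}$-component, I would compute $\zeta(x)\zeta(x')(z,l)$: the inner map sends $(z,l)$ to $(x'\cdot z+\theta_{x'}(l),[x',l])$, and then $\zeta(x)$ of that has ${\mathfrak z}$-part $x\cdot(x'\cdot z)+x\cdot\theta_{x'}(l)+\theta_x([x',l])$. Subtracting the $x\leftrightarrow x'$ version, the terms $x\cdot(x'\cdot z)-x'\cdot(x\cdot z)=[x,x']\cdot z$ match the corresponding term in $\zeta([x,x'])(z,l)$ (because ${\mathfrak z}$ is already a $\widehat{\mathfrak g}$-module), and what remains is precisely
$$x\cdot\theta_{x'}(l)-x'\cdot\theta_x(l)-\theta_{[x,x']}(l)+\theta_x([x',l])-\theta_{x'}([x,l])=0,$$
i.e. the $1$-cocycle condition for $\theta\in C^1(\widehat{\mathfrak g},\Hom({\mathfrak l},{\mathfrak z}))$ with respect to the natural $\widehat{\mathfrak g}$-action on $\Hom({\mathfrak l},{\mathfrak z})$ (note $\theta(x',[l,x])=-\theta_{x'}([x,l])$ accounts for the last sign in the displayed formula in the statement). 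Reading the equivalence in both directions finishes (b).

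I do not expect any serious obstacle here; the content is bookkeeping. The one delicate point — the thing I would be careful about — is getting the $\widehat{\mathfrak g}$-module structure on $C^1({\mathfrak l},{\mathfrak z})=\Hom({\mathfrak l},{\mathfrak z})$ and the meaning of $d^{\mathfrak l}\theta_x$ and of $x\cdot\omega$ exactly right, since these carry the signs that make the explicit relations come out as stated. Concretely, $\Hom({\mathfrak l},{\mathfrak z})$ is a $\widehat{\mathfrak g}$-module via $(x\cdot f)(l)=x\cdot f(l)-f([x,l])$ (using that ${\mathfrak l}$ is an ideal), and $d^{\mathfrak l}$ is the ${\mathfrak l}$-cohomology differential for the coefficients $\Hom({\mathfrak l},{\mathfrak z})$ on which ${\mathfrak l}$ acts by $(l\cdot f)(l')=-f([l,l'])$ since ${\mathfrak l}$ acts trivially on ${\mathfrak z}$; with these conventions the two explicit formulas in the lemma are exactly the unwound Leibniz and representation identities, so the proof amounts to performing those two expansions and comparing.
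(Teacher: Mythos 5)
Your computation is correct and is the standard direct verification of both equivalences; the paper itself offers no argument here (it simply defers to Neeb), and unwinding the Leibniz rule for $\zeta(x)$ against the bracket $[(z_1,l_1),(z_2,l_2)]=(\omega(l_1,l_2),[l_1,l_2])$ and then the identity $\zeta([x,x'])=\zeta(x)\zeta(x')-\zeta(x')\zeta(x)$ is exactly what that reference does, including your use of the genuine $\widehat{\mathfrak g}$-module structure on ${\mathfrak z}$ to absorb the $x\cdot(x'\cdot z)-x'\cdot(x\cdot z)=[x,x']\cdot z$ terms. One cosmetic remark: in your parenthetical sign check for (a), the chain $\omega([l_2,x],l_1)=-\omega([x,l_2],l_1)=-\omega(l_1,[x,l_2])$ has a slip in the last equality (skewsymmetry gives $-\omega([x,l_2],l_1)=+\omega(l_1,[x,l_2])$, so $\omega([l_2,x],l_1)=\omega(l_1,[x,l_2])$), but the conclusion you need --- that the term $-\omega([l',x],l)$ of the stated formula equals the term $-\omega(l_1,[x,l_2])$ of your expansion --- is nevertheless correct, so the proof stands.
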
 

\pr see \cite{Nee}.\fin

We saw that the action of $\widehat{\mathfrak g}$ on $\widehat{\mathfrak l}$
is determined by the map $\theta:\widehat{\mathfrak g}\times{\mathfrak l}\to
{\mathfrak z}$. The preceding Lemma states conditions on $\theta$ coming from 
the fact that $\widehat{\mathfrak g}\to\End(\widehat{\mathfrak l})$ is a 
representation (condition {\it (b)}) and that it is a representations by 
derivations, i.e. in fact $\widehat{\mathfrak g}\to\der(\widehat{\mathfrak l})$
(condition {\it (a)}).

Now we define the $3$-cocycle associated in this context to the crossed 
modules we started with. $\theta|_{{\mathfrak l}\times{\mathfrak l}}$ is alternating,
thus we can extend $\theta:\widehat{\mathfrak g}\times{\mathfrak l}\to
{\mathfrak z}$ by skewsymmetry to a bilinear map 
\begin{equation}   \label{definition_theta_tilde}
\widetilde{\theta}:\widehat{\mathfrak g}\times\widehat{\mathfrak g}\to
{\mathfrak z}.
\end{equation}
Then $d^{\widehat{\mathfrak g}}\widetilde{\theta}\in Z^3(\widehat{\mathfrak g},
{\mathfrak z})$ is a $3$-cocycle which vanishes on 
${\mathfrak l}\times\widehat{\mathfrak g}^2$ by condition {\it (b)} of the Lemma.
This means that as soon as the cocycle is evaluated on a triple of elements 
where at least one comes from ${\mathfrak l}$, it vanishes.
Therefore, it passes to the quotient ${\mathfrak g}=\widehat{\mathfrak g}/
{\mathfrak l}$ to a $3$-cocycle $\chi\in Z^3({\mathfrak g},{\mathfrak z})$
with $d^{\widehat{\mathfrak g}}\widetilde{\theta}=\pi^*\chi$.

\begin{rem}
Observe that one advantage of this approach is that here the coboundary 
operator $d^{\widehat{\mathfrak g}}$ refers to a genuine action of 
$\widehat{\mathfrak g}$ on ${\mathfrak z}$, and we need not deal with 
formal expressions of coboundary operators.
\end{rem}
   
\begin{lem} \label{def_3_cocycle_bis}
The $3$-cocycle $\chi$ defines a class $[\chi]\in H^3({\mathfrak g},
{\mathfrak z})$ which does not depend on the choice of the cocycle $\omega$,
neither on the choice of the extension $\widetilde{\theta}$ of $\theta$.    
\end{lem}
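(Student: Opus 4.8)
The plan is to show that $[\chi]$ is independent of the two choices by a standard "difference is a coboundary" argument, carried out at the level of the big algebra $\widehat{\mathfrak g}$ and then transported to ${\mathfrak g}$ via $\pi^*$, which is injective on cochains (since $\pi$ is surjective). First I would deal with the choice of the extension $\widetilde\theta$. Suppose $\widetilde\theta$ and $\widetilde\theta'$ are two skewsymmetric bilinear extensions of the \emph{same} $\theta:\widehat{\mathfrak g}\times{\mathfrak l}\to{\mathfrak z}$. Then $\psi:=\widetilde\theta-\widetilde\theta'\in C^2(\widehat{\mathfrak g},{\mathfrak z})$ vanishes as soon as one argument lies in ${\mathfrak l}$, hence $\psi=\pi^*\psi_0$ for a unique $\psi_0\in C^2({\mathfrak g},{\mathfrak z})$. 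Moreover $d^{\widehat{\mathfrak g}}\widetilde\theta-d^{\widehat{\mathfrak g}}\widetilde\theta'=d^{\widehat{\mathfrak g}}\psi=\pi^*(d^{\mathfrak g}\psi_0)$, and since both sides are of the form $\pi^*(\cdot)$ and $\pi^*$ is injective on $3$-cochains, we get $\chi-\chi'=d^{\mathfrak g}\psi_0$, so $[\chi]=[\chi']$ in $H^3({\mathfrak g},{\mathfrak z})$.

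Next I would handle the choice of the cocycle $\omega$ defining the central extension $\widehat{\mathfrak l}$. Two cocycles $\omega,\omega'\in Z^2({\mathfrak l},{\mathfrak z})$ define equivalent (indeed, the same up to isomorphism) central extensions iff $\omega-\omega'=d^{\mathfrak l}\eta$ for some $\eta\in C^1({\mathfrak l},{\mathfrak z})=\Hom({\mathfrak l},{\mathfrak z})$. I would check how $\theta$ must change: if $\theta_x$ satisfies $d^{\mathfrak l}\theta_x=x\cdot\omega$ (Lemma \ref{lemma_conditions_on_theta}(a)) then $\theta'_x:=\theta_x-x\cdot\eta$ satisfies $d^{\mathfrak l}\theta'_x=x\cdot\omega-d^{\mathfrak l}(x\cdot\eta)=x\cdot\omega-x\cdot(d^{\mathfrak l}\eta)=x\cdot\omega'$, using that $x\cdot$ commutes with $d^{\mathfrak l}$ (the action is by the coadjoint-type $\widehat{\mathfrak g}$-module structure on cochains, for which the differential is equivariant). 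One also checks $\theta'$ still satisfies the $1$-cocycle condition (b), because $x\mapsto x\cdot\eta$ is itself a $1$-coboundary in $C^1(\widehat{\mathfrak g},\Hom({\mathfrak l},{\mathfrak z}))$ — this is exactly the statement that $\eta$, viewed in $C^0$, has $\widehat{\mathfrak g}$-differential $x\mapsto x\cdot\eta$. Then, extending $\theta'$ skewsymmetrically, $\widetilde{\theta'}=\widetilde\theta-\widetilde{x\cdot\eta}$ where the correction term is the skewsymmetric extension of $(x,l)\mapsto x\cdot\eta(l)$; applying $d^{\widehat{\mathfrak g}}$ and pushing through $\pi^*$ again shows $\chi$ changes by a coboundary over ${\mathfrak g}$.

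Finally I would combine the two: an arbitrary change of data replaces $(\omega,\widetilde\theta)$ by $(\omega',\widetilde{\theta'})$ with $\omega'=\omega+d^{\mathfrak l}\eta$ and $\widetilde{\theta'}$ any skewsymmetric extension of a valid $\theta'$; by the first paragraph the ambiguity in the extension does not matter, and by the second the change of $\omega$ shifts $\chi$ by a coboundary, so $[\chi]$ is well defined. The main obstacle I expect is the bookkeeping around the $\widehat{\mathfrak g}$-module structure on the cochain complex $C^\bullet({\mathfrak l},{\mathfrak z})$: one must be careful that the differential $d^{\mathfrak l}$ is $\widehat{\mathfrak g}$-equivariant (so that $x\cdot\omega$ is again exact when $\omega$ is, etc.) and that "$\theta$ is a $1$-cocycle for $d^{\widehat{\mathfrak g}}$ on $C^1({\mathfrak l},{\mathfrak z})$" interacts correctly with the skewsymmetric extension $\widetilde\theta$ and with $d^{\widehat{\mathfrak g}}$ on $C^2(\widehat{\mathfrak g},{\mathfrak z})$; verifying that $\psi=\widetilde\theta-\widetilde{\theta'}$ genuinely descends along $\pi$ (i.e. really is of the form $\pi^*\psi_0$), which needs that it is not only zero on ${\mathfrak l}\times\widehat{\mathfrak g}$ but that its $d^{\widehat{\mathfrak g}}$-image does too — but that is automatic since $d^{\widehat{\mathfrak g}}$ preserves the subcomplex of cochains vanishing on ${\mathfrak l}$, exactly as used in the construction of $\chi$ itself. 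Everything else is routine diagram-chasing in low-degree Lie algebra cohomology.
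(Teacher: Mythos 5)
The paper itself does not prove this lemma --- the text simply refers the reader to Neeb's article --- so there is no in-paper argument to compare yours with; I can only assess your proof on its own terms. Your first part (independence of the extension $\widetilde{\theta}$ for a fixed $\theta$) is correct and complete: the difference of two skewsymmetric extensions vanishes whenever one argument lies in ${\mathfrak l}$, hence equals $\pi^*\psi_0$ for a unique $\psi_0\in C^2({\mathfrak g},{\mathfrak z})$, and since the $\widehat{\mathfrak g}$-action on ${\mathfrak z}$ is pulled back from ${\mathfrak g}$, $\pi^*$ is an injective chain map, giving $\chi-\chi'=d^{\mathfrak g}\psi_0$. Your transformation rule $\theta'_x=\theta_x-x\cdot\eta$ accompanying the change $\omega'=\omega-d^{\mathfrak l}\eta$ is also the right one: it is exactly what one obtains by transporting the \emph{fixed} $\widehat{\mathfrak g}$-action on $\widehat{\mathfrak l}$ through the isomorphism $(z,l)\mapsto(z+\eta(l),l)$ of the two presentations of the central extension.

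The genuine gap is in the last step of the second part. The correction term $\sigma:=\widetilde{\theta}-\widetilde{\theta'}$, a skewsymmetric extension of $(x,l)\mapsto(x\cdot\eta)(l)$, does \emph{not} vanish on $\widehat{\mathfrak g}\times{\mathfrak l}$, so it does not descend along $\pi$; consequently ``applying $d^{\widehat{\mathfrak g}}$ and pushing through $\pi^*$'' only tells you that $\pi^*(\chi-\chi')=d^{\widehat{\mathfrak g}}\sigma$ is exact \emph{upstairs}, and exactness on $\widehat{\mathfrak g}$ of a cochain of the form $\pi^*\rho$ does not imply exactness of $\rho$ on ${\mathfrak g}$, because $\pi^*$ need not be injective on cohomology (only on cochains). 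The argument is rescued by making a good choice of $\sigma$, which your first part entitles you to do: extend $\eta$ arbitrarily to a linear map $\widetilde{\eta}:\widehat{\mathfrak g}\to{\mathfrak z}$ and set $\sigma:=d^{\widehat{\mathfrak g}}\widetilde{\eta}$. Since ${\mathfrak l}$ acts trivially on ${\mathfrak z}$ and $[x,l]\in{\mathfrak l}$, one checks $(d^{\widehat{\mathfrak g}}\widetilde{\eta})(x,l)=x\cdot\eta(l)-\eta([x,l])=(x\cdot\eta)(l)$, so this $\sigma$ is an admissible skewsymmetric extension of the correction term, and $d^{\widehat{\mathfrak g}}\sigma=0$; hence with this choice $\chi'=\chi$ on the nose, and any other choice alters $\chi'$ only by a ${\mathfrak g}$-coboundary by your first part. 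With that one insertion your proof closes.
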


\pr This is shown in \cite{Nee}.\fin

Let us come back to the formula for the characteristic class of a crossed
module which we saw in the previous section. 

\begin{prop} \label{comparison_prop}
Let $\mu:{\mathfrak m}\to{\mathfrak n}$ be a crossed module. Let
$$0\to V\stackrel{i}{\to} {\mathfrak m}\stackrel{\mu}{\to}{\mathfrak n}
\stackrel{\pi}{\to}{\mathfrak g}\to 0$$
be the associated four term exact sequence and denote  
${\mathfrak l}=\im(\mu)$.

Then the classes $[\gamma]$, where $\gamma\in Z^3({\mathfrak g},V)$ 
is obtained as
in Equation (\ref{def_3_cocycle}) by using sections in the four term exact sequence, and 
$[\chi]$, where $\chi=d^{\hat{\mathfrak g}}\tilde{\theta}$ is defined in 
Lemma \ref{def_3_cocycle_bis} (with ${\mathfrak z}=V$), coincide in 
$H^3({\mathfrak g},V)$.
\end{prop}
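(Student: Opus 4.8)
The plan is to show that the two three-cocycles $\gamma$ and $\chi$ arise from the very same data, once we identify the $\widehat{\mathfrak g}$ of Neeb's framework with ${\mathfrak n}$ and the ideal ${\mathfrak l}$ with $\im(\mu)$. Concretely, one starts with the crossed module $\mu:{\mathfrak m}\to{\mathfrak n}$ and reads off both pieces of Neeb's input: the general extension $0\to{\mathfrak l}\to{\mathfrak n}\to{\mathfrak g}\to 0$ (so $\widehat{\mathfrak g}={\mathfrak n}$), and the central extension $0\to V\to{\mathfrak m}\stackrel{\mu}{\to}{\mathfrak l}\to 0$ with $\widehat{\mathfrak l}={\mathfrak m}$, ${\mathfrak z}=V$. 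Property (b) of Definition~\ref{def_crmod} makes $V$ central in ${\mathfrak m}$ as already noted, so this is genuinely a central extension, and the $\widehat{\mathfrak g}={\mathfrak n}$-action on ${\mathfrak m}$ is exactly the crossed module action. So both constructions genuinely take the same crossed module as input.

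First I would fix compatible choices of sections. Choose a linear section $\rho:{\mathfrak g}\to{\mathfrak n}$ of $\pi$ and a linear section $s:{\mathfrak l}\to{\mathfrak m}$ of $\mu$; by Remark's exact sequence $s$ amounts to a choice of $\sigma$ and the splitting ${\mathfrak m}\cong V\oplus{\mathfrak l}$ used to write $\widehat{\mathfrak l}={\mathfrak z}\oplus_\omega{\mathfrak l}$, where $\omega(l,l')=s([l,l'])-[s(l),s(l')]$ lands in $V$ (up to sign conventions, this is the standard 2-cocycle of the central extension, matching what the excerpt calls $\omega$). With these choices, $\alpha(x_1,x_2)=[\rho(x_1),\rho(x_2)]-\rho([x_1,x_2])\in{\mathfrak l}$, and $\beta=\sigma\circ\alpha=s\circ\alpha$ (identifying $\sigma$ with $s$), so $\beta$ is literally the ${\mathfrak l}$-component description of a lift of $\alpha$ into ${\mathfrak m}$. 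Next I would unwind $\theta$: by (\ref{definition_of_theta}), $\theta(x,l)$ is the $V$-component of $x\cdot s(l)-s([x,l])\in{\mathfrak m}$ for $x\in{\mathfrak n}$, $l\in{\mathfrak l}$, and its skew extension $\widetilde\theta$ to ${\mathfrak n}\times{\mathfrak n}$ agrees, on arguments from ${\mathfrak l}$, with $\omega$, consistently with $\theta|_{{\mathfrak l}\times{\mathfrak l}}=\omega$ noted in the text.

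The heart of the comparison is then a direct cochain computation: I would show that $d^{\mathfrak m}\beta$ — the formal Chevalley–Eilenberg expression appearing in (\ref{def_3_cocycle}), whose $V$-component is $\gamma=\tau\circ d^{\mathfrak m}\beta$ — coincides, after projecting to $V$ and using the $\mathfrak n$-action, with $(d^{\widehat{\mathfrak g}}\widetilde\theta)$ pulled back along $\rho$, i.e. with $\pi^*\chi$ evaluated via $\rho$, which is just $\chi$. The mechanism is that in both cases one is measuring the failure of the chosen lift ($\rho$ composed with $s$, essentially) to respect brackets all the way up in ${\mathfrak m}$; the terms of $d^{\mathfrak m}\beta(x_1,x_2,x_3)$ expand into brackets $[\rho(x_i),\beta(x_j,x_k)]$ inside ${\mathfrak m}$, whose $V$-parts are governed precisely by $\theta$ (that is how $\theta$ was defined as the $\mathfrak z$-valued correction to the action), while the remaining $\beta([x_i,x_j],x_k)=s\circ\alpha([x_i,x_j],x_k)$ terms match the $\widetilde\theta([\rho x_i,\rho x_j]-\rho[x_i,x_j],\ldots)=\widetilde\theta(\alpha(x_i,x_j),\ldots)=\omega(\alpha(\cdots),\cdots)$ contributions to $d^{\widehat{\mathfrak g}}\widetilde\theta$. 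Collecting, $\gamma-\chi$ (as a function on ${\mathfrak g}^3$ with values in $V$) should come out as a Chevalley–Eilenberg coboundary, the correction term being built from the freedom in choosing the lift $\beta$ versus the extension $\widetilde\theta$; by Lemma~\ref{def_3_cocycle_bis} that freedom does not affect $[\chi]$, and the analogous statement for $[\gamma]$ is classical, so at the level of cohomology the two classes agree.

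The main obstacle I expect is bookkeeping rather than conceptual: the cocycle $\gamma$ is defined through a \emph{formal} coboundary operator $d^{\mathfrak m}$ (there is no genuine $\mathfrak g$-action on $\mathfrak m$), whereas $\chi$ is defined through the \emph{honest} action of $\widehat{\mathfrak g}={\mathfrak n}$ on $\widehat{\mathfrak l}={\mathfrak m}$, as the Remark after (\ref{definition_theta_tilde}) explicitly stresses. So one has to be scrupulous about which coboundary operator acts where, track every section-dependent correction term, and verify that the passage "project to $V$, then descend along $\pi$" is carried out compatibly on both sides. Once the dictionary $\rho\leftrightarrow$ section of $\pi$, $s\leftrightarrow\sigma$, $\omega\leftrightarrow$ central 2-cocycle, $\theta\leftrightarrow$ action-correction is set up cleanly, matching the two three-cochains term by term is routine; I would reference \cite{Wag} and \cite{Nee} for the analogous term-by-term identities already done there and only spell out the new cross-identification.
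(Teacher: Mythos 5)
Your proposal is correct and follows essentially the same route as the paper: identify $\widehat{\mathfrak g}={\mathfrak n}$, $\widehat{\mathfrak l}={\mathfrak m}=V\oplus_\omega{\mathfrak l}$, ${\mathfrak z}=V$, fix sections $\rho$ and $\sigma$, write $\beta=(\beta_V,\alpha)$, and compare $d^{\mathfrak m}\beta$ with $\rho^*(d^{\widehat{\mathfrak g}}\tilde\theta)$ term by term, using that the ${\mathfrak l}$-component of $d^{\mathfrak m}\beta$ vanishes and that the $V$-parts of the action terms are governed by $\theta$. The only difference in execution is that the paper uses the freedom in the alternating extension $\tilde\theta$ (harmless by Lemma~\ref{def_3_cocycle_bis}) to normalize $\rho^*\tilde\theta=\beta_V$ up to coboundary, after which the two cocycles agree on the nose rather than up to the residual coboundary you anticipate.
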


\pr
First choose $\omega\in Z^2({\mathfrak l},V)$ with 
${\mathfrak m}=:\hat{\mathfrak l}=
V\oplus_{\omega}{\mathfrak l}$ and a linear section $\rho:{\mathfrak g}\to
{\mathfrak n}$. Associated to $\rho$, we have for $x,y,z\in{\mathfrak g}$
$$\alpha(x,y)=[\rho(x),\rho(y)]-\rho([x,y])\in{\mathfrak n},$$
and $\beta(x,y)\in{\mathfrak m}$ with
$$\mu\beta(x,y)=\alpha(x,y).$$
We therefore get $d^{\mathfrak m}\beta(x,y,z)$ and $\gamma(x,y,z)\in V$ such that
$i\gamma(x,y,z)=d^{\mathfrak m}\beta(x,y,z)$ as before. Now as 
$\beta(x,y)\in\hat{\mathfrak l}=V\oplus_{\omega}{\mathfrak l}$, write 
$$\beta=(\beta_V,\alpha)\quad{\rm with}\quad \beta_V\in C^2({\mathfrak g},V).$$
On the other hand, write the action of ${\mathfrak m}$ on $\hat{\mathfrak l}$ as
$x\cdot(z,l)=(x\cdot z+\theta(x,l),[x,l])$ with a bilinear map 
$\theta:{\mathfrak n}\times{\mathfrak l}\to V$. Then
\begin{eqnarray*}
d^{\mathfrak m}\beta(x,y,z)&=&\sum_{\rm cycl.}\Big(\rho(x)\cdot(\beta_V(y,z),
\alpha(y,z))-(\beta_V([x,y],z),\alpha([x,y],z))\Big) \\
&=&\sum_{\rm cycl.}\Big((x\cdot\beta_V(y,z)+\theta(\rho(x),\alpha(y,z)),
[\rho(x),\alpha(y,z)])\\
&-&(\beta_V([x,y],z),\alpha([x,y],z))\Big). 
\end{eqnarray*}
As $d^{\mathfrak m}\beta(x,y,z)\in\ker(\mu)$, the ${\mathfrak l}$-component of 
this expression must be zero. This may be seen as an abstract Bianchi identity,
cf \cite{Nee}. We thus obtain
$$d^{\mathfrak m}\beta(x,y,z)=d^{V}\beta_V(x,y,z)+\sum_{\rm cycl.}
\theta(\rho(x),\alpha(y,z))=\gamma(x,y,z).$$  
We must now compare this to $\chi(x,y,z)$. For this, let $\tilde{\theta}\in
C^2({\mathfrak m},V)$ be an alternating extension of $\theta$. We may assume
that $\rho^*\tilde{\theta}=\beta_V$ up to a coboundary. We show now that 
$\pi^*(d^{\mathfrak m}\beta)=d^{\mathfrak m}\tilde{\theta}$, so that $[\chi]= 
[d^{\mathfrak m}\beta]\in H^3({\mathfrak g},V)$ as claimed. 
\begin{eqnarray*}
d^{\mathfrak m}\tilde{\theta}(x,y,z)&=&\sum_{\rm cycl.}\Big(x\cdot
\tilde{\theta}(\rho(y),\rho(z))-\tilde{\theta}([\rho(x),\rho(y)],\rho(z))\Big)\\
&=&\sum_{\rm cycl.}\Big(x\cdot\tilde{\theta}(\rho(y),\rho(z))-\tilde{\theta}
(\rho([x,y])+\alpha(x,y),\rho(z))\Big) \\
&=&\sum_{\rm cycl.}\Big(x\cdot\beta_V(y,z)-\beta_V([x,y],z)-\tilde{\theta}
(\alpha(x,y),\rho(z))\Big) \\
&=&\sum_{\rm cycl.}\Big(x\cdot\beta_V(y,z)-\beta_V([x,y],z)+\theta(\rho(x),
\alpha(y,z))\Big) \\
&=&d^{\mathfrak m}\beta_V(x,y,z)+\sum_{\rm cycl.}\theta(\rho(x),\alpha(y,z))\\
&=&d^{\mathfrak m}\beta(x,y,z),
\end{eqnarray*}
where we used the definition of $\alpha$ passing from the first to the second 
line, then the assumption $\rho^*\tilde{\theta}=\beta_V$, then the fact that
$\tilde{\theta}$ reduces to $\theta$ in case one variable is in 
${\mathfrak l}$, and finally the relation between $d^{\mathfrak m}\beta$ and 
$d^{\mathfrak m}\beta_V$ we obtained earlier. 
\fin

\section{The string Lie algebra}

The {\it string group} arises in the following context. Recall that for a
connected, simply connected, finite dimensional, semi-simple Lie group $G$, 
$\pi_3(G)$ counts the number of simple factors of $G$. Starting with an 
arbitrary topological group, the connected component of $1\in K$, denoted
$K_1$, has $\pi_i(K)=\pi_i(K_1)$ for all $i>0$, but $\pi_0(K_1)=0$. 
In the same way (in case $K$ is, e.g. locally 
contractible and connected), the universal covering $\tilde{K}$ of $K$, 
satisfies $\pi_i(\tilde{K})=\pi_i(K)$ for all $i>1$, but $\pi_1(\tilde{K})=
\pi_0(\tilde{K})=0$. All finite dimensional Lie groups $K$ have $\pi_2(K)=0$. 
The string group ${\rm Str}(G)$ of $G$ is defined in the same way as being a 
group $S$ such that $\pi_i(S)=\pi_i(G)$ for all $i>3$, but $\pi_3(S)=\pi_2(S)=
\pi_1(S)=\pi_0(S)=0$. By what we remarked before, the group $S$ cannot be a 
finite dimensional Lie group, because its maximal compact subgroup (which is 
homotopy equivalent to $S$) should contain a simple factor. The string group
${\rm Str}(G)$ for a connected, simply connected, finite dimensional, semi-simple 
Lie group $G$ has many homotopy theoretical realizations. We will focuss here
on the Lie algebraic version given in \cite{Nee} of it. 

Let ${\mathfrak g}$ be a finite dimensional real Lie algebra and denote by
$I$ the closed interval $I=[0,1]$. We consider 
the {\it smooth path algebra} 
$$P({\mathfrak g}):=\{\xi\in C^{\infty}(I,{\mathfrak g})\,|\,\xi(0)=0\}$$
of ${\mathfrak g}$. 
Then evaluation in $1\in I$,
denoted ${\rm ev}_1$, leads to a short exact sequence 
$$0\to{\mathfrak l}\to P({\mathfrak g})\to{\mathfrak g}\to 0,$$
where ${\mathfrak l}:=\ker({\rm ev}_1)$ is the ideal of closed paths in 
$P({\mathfrak g})$  and a linear section $\rho:{\mathfrak g}\to 
P({\mathfrak g})$ is given by $\rho(x)(t):=tx$. Note that ${\mathfrak l}$ 
is larger than the Lie algebra $C^{\infty}(S^1,{\mathfrak g})$ which corresponds
to those elements of ${\mathfrak l}$ for which all derivatives have the same
boundary value in $0$ and $1$. 

Let $\kappa:{\mathfrak g}\times{\mathfrak g}\to{\mathfrak z}$ be an invariant
bilinear form. The invariance condition means that for all $x,y,z\in
{\mathfrak g}$, we have
$$\kappa([x,y],z)=\kappa(x,[y,z]).$$
We consider in the following ${\mathfrak z}$ as a trivial 
$P({\mathfrak g})$-module. Then the Lie algebra ${\mathfrak l}$ has a central
extension $\hat{\mathfrak l}:={\mathfrak z}\times_{\omega}{\mathfrak l}$ where
the cocycle $\omega$ is given by
$$\omega(\xi,\eta):=\int_0^1\kappa(\xi,\eta'):=\int_0^1\kappa(\xi,\eta')(t)
dt.$$
We define $\widetilde{\omega}\in C^2(P({\mathfrak g}),{\mathfrak z})$ by      
\begin{eqnarray*}
\widetilde{\omega}(\xi,\eta)&:=&\frac{1}{2}\int_0^1(\kappa(\xi,\eta')-
\kappa(\eta,\xi'))=\frac{1}{2}\int_0^1(2\kappa(\xi,\eta')-\kappa(\eta,\xi)')
=\\
&=&\int_0^1\kappa(\xi,\eta')-\frac{1}{2}\kappa(\xi,\eta)(1).
\end{eqnarray*}  
We observe that for $(\xi,\eta)\in P({\mathfrak g})\times{\mathfrak l}$, we 
have $\widetilde{\omega}(\xi,\eta)=\theta(\xi,\eta):=\int_0^1
\kappa(\xi,\eta')$. This is the map $\theta$ which we used in Equation 
(\ref{definition_of_theta})
in order to associate a $3$-class to a crossed module. It is therefore 
clear that $\widetilde{\omega}$ 
may serve as $\tilde{\theta}$, the skewsymmetric extension of $\theta$ to  
$P({\mathfrak g})\times P({\mathfrak g})$.

For the following construction of the crossed module related to the string Lie
algebra, we compute:
\begin{eqnarray*}
\sum_{\rm cycl.}\int_0^1\kappa([\xi,\eta],\zeta')&=&\int_0^1\kappa([\xi,\eta],
\zeta')+\kappa([\eta,\zeta],\xi')+\kappa([\zeta,\xi],\eta')=\\
&=& \int_0^1\kappa([\xi,\eta],\zeta')+
\kappa([\xi',\eta],\zeta)+\kappa([\xi,\eta'],\zeta)=\\
&=&\int_0^1\kappa([\xi,\eta],\zeta)'=\kappa([\xi,\eta],\zeta)(1).
\end{eqnarray*}
This implies 
\begin{eqnarray*}
\sum_{\rm cycl.}\int_0^1\kappa([\xi,\eta]',\zeta)&=&\sum_{\rm cycl.}
\int_0^1\kappa([\xi',\eta],\zeta)+\kappa([\xi,\eta'],\zeta)=\\
&=&\sum_{\rm cycl.}
\int_0^1\kappa(\xi',[\eta,\zeta])+\kappa([\zeta,\xi],\eta')=\\
&=&\kappa([\eta,\zeta],\xi)(1)+\kappa([\zeta,\xi],\eta)(1)=\\
&=&2\kappa([\eta,\zeta],\xi)(1)=2\kappa([\xi,\eta],\zeta)(1).
\end{eqnarray*}
Here we used invariance of $\kappa$ and skewsymmetry of $[,]$ to rearrange 
the three terms to a complete derivative. We deduce from this computation
\begin{eqnarray*}
(d^{P({\mathfrak g})}\tilde{\omega})(\xi,\eta,\zeta)&=&\frac{1}{2}\int_0^1
\sum_{\rm cycl.}\Big(\kappa(\xi,[\eta,\zeta]')-\kappa([\eta,\zeta],\xi')\Big)\\
&=&\frac{1}{2}\Big(2\kappa([\eta,\zeta],\xi)(1)-\kappa([\eta,\zeta],\xi)(1)\Big)
=\frac{1}{2}\kappa([\eta,\zeta],\xi)(1),
\end{eqnarray*}
and the expression $d^{P({\mathfrak g})}\tilde{\omega}$ thus vanishes on 
$\Big(P({\mathfrak g})\Big)^2\times {\mathfrak l}$. This is (by restriction 
of the arguments) the cocycle identity for the cochain 
$\theta\in C^1(P({\mathfrak g}),{\mathfrak z})$ and the cochain 
$\theta\in C^0(P({\mathfrak g}),\Hom({\mathfrak l},{\mathfrak z}))=
\Hom({\mathfrak l},{\mathfrak z})$. 

By Lemma \ref{lemma_conditions_on_theta}, the formula
$$x\cdot(z,l)=(\theta(x,l),[x,l])$$
thus defines a representation of $P({\mathfrak g})$ on ${\mathfrak l}$. 
Moreover, the map $\mu:\hat{{\mathfrak l}}\to P({\mathfrak g})$ defines a 
crossed module, which we call the {\it string Lie algebra crossed module}.
By our computations, we have computed the $3$-cocycle $\chi$ defined by
${\rm ev}_1^*\chi=d^{P({\mathfrak g})}\tilde{\omega}$. We have
$$\chi(x,y,z):=\frac{1}{2}\kappa([x,y],z),$$
and $\chi\in Z^3({\mathfrak g},{\mathfrak z})$. Up to the factor $\frac{1}{2}$,
the cocycle $\chi$ is called the {\it Cartan cocycle}. The cocycle $\chi$ 
represents the class in $H^3({\mathfrak g},{\mathfrak z})$ of the crossed 
module 
$\mu:\hat{{\mathfrak l}}\to P({\mathfrak g})$. In case $\kappa\not=0$, the 
class $[\chi]\in H^3({\mathfrak g},{\mathfrak z})$ is non zero.   
For a simple Lie algebra ${\mathfrak g}$, taking $\kappa$ to be the 
{\it Killing form}, the $1$-dimensional cohomology space 
$H^3({\mathfrak g},\R)$ is generated by the class $[\chi]$.   

\begin{rem}
This example should be compared to the example of the string group 
after Baez-Crans-Schreiber-Stevenson in \cite{BCSS}.
The string Lie algebra here is the Lie algebra crossed module corresponding
to the Lie group crossed module given by the string group.
\end{rem}

In the following, our aim is to construct an {\it abelian representative} of the 
equivalence class of the crossed module associated to the string Lie algebra.
This will play a role in a later section for the construction of quasi-invariant 
tensors.    

\section{Lifting $3$-cocycles with values in $\mathbb{C}$}

\subsection{Preliminairies}

We denote by $U\!\mathfrak{g}$ the universal enveloping algebra of a Lie algebra 
$\mathfrak{g}$, and by $U\!\mathfrak{g}^+$ the kernel of the augmentation map
\[
\varepsilon:U\!\mathfrak{g}\to \mathbb{C}
\]
Identifying $\mathbb{C}$ with its dual, this gives rise to a short exact sequence of 
(left) $\mathfrak{g}$-modules:

\begin{equation}
0\to \mathbb{C}\to (U\!\mathfrak{g})^\vee \to (U\!\mathfrak{g}^+)^\vee \to 0
\label{eq:short-exact-sequence-augmentation}
\end{equation}

Recall that $U\!\mathfrak{g}$ is a cocommutative Hopf algebra. In particular, 
its vector space of endomorphisms $\text{\rm End}_\mathbb{C}(U\!\mathfrak{g})$ 
is equipped with the so called {\it convolution product} that we denote $\star$. 
The $n$-fold iterated coproduct of an element $x$ of $U\!\mathfrak{g}$ will be written
\[
\sum_{(x)} x^{(1)}\otimes \cdots \otimes x^{(n)}\quad \in\: U\!\mathfrak{g}^{\otimes n}
\]
in Sweedler's notation.

$\mathbb{C}[t]$ denotes the $\mathbb{C}$-algebra of polynomials in the variable $t$ 
with complex coefficients. If $\lambda$ is a complex number, and $V$ is a vector space, 
we have an evaluation map $\text{\rm ev}_\lambda: V\otimes \mathbb{C}[t]\to V$ which 
sends $v\otimes t^n$ to $\lambda^nv$, for all $v$ in $V$. In particular, every
$\mathbb{C}[t]$-linear map $B:V\otimes \mathbb{C}[t]\to V\otimes \mathbb{C}[t]$
is uniquely determined by its $\mathbb{C}$-linear restriction to $V$: 
$B_t:V\to V\otimes\mathbb{C}[t]$. We will employ the notation $B_{\lambda}$
to denote the composite $\text{\rm ev}_\lambda \circ B_t :V\to V$.

\subsection{Using the contracting homotopy to lift cocycles}

Let $\mathfrak{g}$ be a Lie algebra over $\mathbb{C}$ with Lie bracket $[-,-]$, and 
denote by $\eta:\mathbb{C}\to U\!\mathfrak{g}$ the unit map.
\begin{defi}
 If $x$ is an element of $U\!\mathfrak{g}$ and $g$ is an element of $\mathfrak{g}$, 
then we set
 \begin{enumerate}
 \item[(a)] $\text{\rm pr}(x):=\sum_{k\geq 0}\frac{(-1)^k}{k+1}(\text{\rm Id}-
\eta\epsilon)^{\star k}$, and this defines an endomorphism $\text{\rm pr}$ of 
$U\!\mathfrak{g}$,
 \item[(b)] $\phi_t(x):=\sum_{n\geq 0} \frac{1}{n!}\text{\rm pr}^{\star n}(x)$, which 
defines an element of $U\!\mathfrak{g}\otimes \mathbb{C}[t]$ and thus a 
$\mathbb{C}[t]$-linear endomorphism of $U\!\mathfrak{g}\otimes \mathbb{C}[t]$,
 \item[(c)] $A_t(x,g):=\sum_{(x)}\phi_{-t}(x^{(1)})\phi_t(x^{(2)}g)$.
 \end{enumerate}
\end{defi}

The following proposition summarizes the different properties of $\text{\rm pr}$, 
$\phi_t$ and $A_t$ we will need in the sequel.
\begin{prop}\label{prop-phi-t-A-t-pr}
Let $\Delta$ denote the unique $\mathbb{C}[t]$-linear coproduct of 
$U\!\mathfrak{g}\otimes \mathbb{C}[t]$ extending the comultiplication on 
$U\!\mathfrak{g}$. Then, 
\begin{enumerate}
\item $\text{\rm pr}$ is idempotent ($\text{\rm pr}\circ \text{\rm pr}=\text{\rm pr}$) 
takes its values in $\mathfrak{g}\subset U\!\mathfrak{g}$. More precisely, if 
$g_1$, ..., $g_n$ are elements of $\mathfrak{g}$, then

  \begin{equation}
\text{\rm pr}(g_1\cdots g_n)=\frac{1}{n^2}\sum_{\sigma\in\Sigma_n}(-1)^{d(\sigma)}
\binom{n-1}{d(\sigma)}^{-1}[g_{\sigma(1)},\cdots,g_{\sigma(n)}]
\label{eq:general-formula-eulerian-idem}
\end{equation}
where the notation $[h_1,\cdots,h_n]$ stands for the iterated Lie bracket 
$$[h_1,[h_2,[\cdots,[h_{n-1},h_n]\cdots ]]]$$ 
of elements $h_1$, ..., $h_n$ of 
$\mathfrak{g}$. 
\item $\phi_t$ is an endomorphism of coalgebra i.e.
  \[
  \Delta(\phi_t(x))=(\phi_t\otimes \phi_t)\Delta(x)=\sum_{(x)} \phi_t(x^{(1)})
\otimes \phi_t(x^{(2)})
\]
for all $x$ in $U\!\mathfrak{g}\subset U\!\mathfrak{g}\otimes \mathbb{C}[t]$.
\item $\phi_0=\eta\epsilon$ and $\phi_1=\text{\rm Id}$,
\item $\phi_t\star \phi_{-t}=\eta\epsilon$ on $U\!\mathfrak{g}\subset U\!
\mathfrak{g}\otimes \mathbb{C}[t]$.
\item for all $x$ in $U\!\mathfrak{g}$ and $g$ in $\mathfrak{g}$, $A_t(x,g)$ is an element of $\mathfrak{g}$ and
\[
A_t(x,g)=-\sum_{(x)} \phi_{-t}(x^{(1)}g)\phi_{t}(x^{(2)})
\]
\end{enumerate}
\end{prop}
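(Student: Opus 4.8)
The plan is to establish the five assertions essentially in the order in which the objects are built, since each relies on the previous one. The key point throughout is that $\mathrm{pr}$, $\phi_t$ and $A_t$ are all obtained from the convolution algebra structure on $\End_{\C}(U\mathfrak{g})$, and that $U\mathfrak{g}$ is a cocommutative Hopf algebra, so one can freely manipulate Sweedler sums. I would first recall that $\log^{\star}(\mathrm{Id}) := \sum_{k\ge 0}\frac{(-1)^k}{k+1}(\mathrm{Id}-\eta\epsilon)^{\star(k+1)}$ is the classical first Eulerian idempotent $e^{(1)}$, so that $\mathrm{pr}$ as defined in (a) is (up to the standard shift) exactly $e^{(1)}$; then item~1 is the Patras--Reutenauer formula for $e^{(1)}$ on a product of primitives, which I would either cite or re-derive by expanding $(\mathrm{Id}-\eta\epsilon)^{\star m}(g_1\cdots g_n)$ via the coproduct and counting shuffles. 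Idempotence of $\mathrm{pr}$ and the fact that it lands in $\mathfrak{g}\subset U\mathfrak{g}$ are part of the same classical package: $e^{(1)}$ projects onto the space of primitives, which for $U\mathfrak{g}$ is precisely $\mathfrak{g}$.

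Next, items~2 and~3. For item~3, $\phi_t(x)=\sum_{n\ge 0}\frac{t^n}{n!}\mathrm{pr}^{\star n}(x)$ (reading the ``$t$'' from the $\C[t]$-coefficient), so $\phi_0 = \mathrm{pr}^{\star 0} = \eta\epsilon$ is immediate, and $\phi_1 = \sum_{n\ge 0}\frac{1}{n!}\mathrm{pr}^{\star n} = \exp^{\star}(\mathrm{pr}) = \exp^{\star}(\log^{\star}\mathrm{Id}) = \mathrm{Id}$, again a standard identity in the pro-unipotent convolution group. For item~2, the cleanest route is to observe that $\mathrm{pr}=e^{(1)}$ is a coalgebra map into the primitives, hence $\Delta\circ\mathrm{pr}^{\star n} = (\mathrm{pr}^{\star a}\otimes\mathrm{pr}^{\star b})$-type sums assemble, by the binomial theorem in the convolution algebra of $\End(U\mathfrak{g}\otimes U\mathfrak{g})$, into $(\phi_t\otimes\phi_t)\Delta$; equivalently, $\phi_t=\exp^{\star}(t\,\mathrm{pr})$ and the exponential of a coalgebra-compatible ($\Delta$-``derivation-like'') element is a coalgebra morphism. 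I would spell out the bookkeeping: $\Delta(\mathrm{pr}^{\star n}(x)) = \sum_{p+q=n}\binom{n}{p}\sum_{(x)}\mathrm{pr}^{\star p}(x^{(1)})\otimes\mathrm{pr}^{\star q}(x^{(2)})$, then divide by $n!$ and sum.

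Then item~4 follows formally from items~2 and~3: in the convolution algebra, $\phi_t = \exp^{\star}(t\,\mathrm{pr})$, so $\phi_t\star\phi_{-t} = \exp^{\star}(t\,\mathrm{pr})\star\exp^{\star}(-t\,\mathrm{pr}) = \exp^{\star}(0)=\eta\epsilon$, where commutativity of $t\,\mathrm{pr}$ with $-t\,\mathrm{pr}$ under $\star$ is trivial. If one wants to avoid invoking the exponential formalism, item~4 is exactly the statement $m\circ(\phi_t\otimes\phi_{-t})\circ\Delta = \eta\epsilon$, which one checks degree by degree in $t$ using the coproduct formula for $\mathrm{pr}^{\star n}$ from the previous paragraph. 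Finally, for item~5: that $A_t(x,g)$ lies in $\mathfrak{g}$ should be read off from item~2, since $A_t(x,g)=\sum_{(x)}\phi_{-t}(x^{(1)})\phi_t(x^{(2)}g)$ is (after the evaluation that defines $A$) built from a convolution $\phi_{-t}\star\phi_t$ applied to $xg$ modulo the coproduct of $g$ being primitive, and $\phi_{-t}\star\phi_t$ projects onto primitives by items~2--4 combined with item~1; concretely, expanding $\Delta(xg)=\sum_{(x),(g)}x^{(1)}g^{(1)}\otimes x^{(2)}g^{(2)}$ with $\Delta g = g\otimes 1 + 1\otimes g$ and using cocommutativity gives $A_t(x,g) = (\phi_{-t}\star\phi_t)(xg) - \sum_{(x)}\phi_{-t}(x^{(1)}g)\phi_t(x^{(2)})$; the first term is $(\eta\epsilon)(xg)=0$ unless $x\in\C$, and a short argument shows it vanishes in general, yielding simultaneously the alternative formula $A_t(x,g)=-\sum_{(x)}\phi_{-t}(x^{(1)}g)\phi_t(x^{(2)})$ and, via item~1 applied to the primitivity analysis, membership in $\mathfrak{g}$.

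I expect the main obstacle to be item~1 — the explicit Patras--Reutenauer formula for the Eulerian idempotent on a product of $n$ primitives, with the $\binom{n-1}{d(\sigma)}^{-1}$ weights indexed by the descent number $d(\sigma)$ — which requires a genuine combinatorial argument (expanding the iterated convolution powers of $\mathrm{Id}-\eta\epsilon$, regrouping shuffles of the $g_i$'s, and matching the alternating sum with descents against iterated brackets via a Dynkin-type idempotent identity $e^{(1)} = \tfrac{1}{n}\theta$ on the degree-$n$ part composed with the Dynkin map); everything else is formal Hopf-algebraic bookkeeping with the convolution exponential and logarithm. A secondary mild nuisance is keeping the $t$-grading straight, i.e. consistently interpreting ``$\phi_t = \exp^{\star}(t\,\mathrm{pr})$'' as an element of $(\End_\C U\mathfrak{g})[t]$ and only evaluating at $t=\lambda$ at the end; I would state this normalization once at the outset and then suppress it.
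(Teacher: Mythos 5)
The paper states Proposition \ref{prop-phi-t-A-t-pr} without any proof (the authors implicitly defer to the first author's thesis and to the classical theory of Eulerian idempotents), so there is nothing in the text to compare your argument against: your proposal is in effect supplying the missing proof. The route you take --- identifying $\mathrm{pr}$ with the first Eulerian idempotent $e^{(1)}=\log^{\star}(\mathrm{Id})$ (correctly noting the index shift in the paper's formula), reading $\phi_t$ as $\exp^{\star}(t\,\mathrm{pr})$, and deducing items 2--4 from the binomial and exponential calculus in the convolution algebra together with the fact that $e^{(1)}$ is an idempotent projector onto the primitives $\mathfrak{g}\subset U\mathfrak{g}$ --- is the standard and correct one, and you rightly isolate the Patras--Reutenauer/Dynkin computation behind formula (\ref{eq:general-formula-eulerian-idem}) as the only genuinely combinatorial input.

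Two points in your treatment of item 5 need tightening. First, $(\eta\epsilon)(xg)=\epsilon(x)\epsilon(g)\,1=0$ for \emph{every} $x$, with no exception for $x\in\C$, simply because $g\in\mathfrak{g}$ lies in the augmentation ideal; your hedge ``unless $x\in\C$'' is unnecessary, though the conclusion you draw (and hence the alternative formula for $A_t$) is right. Second, the phrase ``$\phi_{-t}\star\phi_t$ projects onto primitives'' is false as stated: by item 4 this convolution product equals $\eta\epsilon$, which annihilates primitives. The membership $A_t(x,g)\in\mathfrak{g}$ should instead be proved by checking primitivity directly: using item 2, the primitivity of $g$ and cocommutativity one computes $\Delta(A_t(x,g))=\sum_{(x)}A_t(x^{(1)},g)\otimes(\phi_{-t}\star\phi_t)(x^{(2)})+\sum_{(x)}(\phi_{-t}\star\phi_t)(x^{(1)})\otimes A_t(x^{(2)},g)=A_t(x,g)\otimes 1+1\otimes A_t(x,g)$, and the primitives of $U\mathfrak{g}$ are exactly $\mathfrak{g}$. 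A last cosmetic remark: ``$\mathrm{pr}$ is a coalgebra map into the primitives'' should read ``$\mathrm{pr}$ takes primitive values''; only the latter is true, and only the latter is what your binomial expansion of $\Delta\circ\mathrm{pr}^{\star n}$ actually uses. With these repairs the proof is complete.
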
\medskip

Let $f:\Lambda^3\mathfrak{g}\to \mathbb{C}$ be a Chevalley--Eilenberg $3$-cochain 
of $\mathfrak{g}$ with values in $\mathbb{C}$. Using the identification
\[
C^2(\mathfrak{g};(U\!\mathfrak{g})^\vee)\cong C_2(\mathfrak{g};U\!\mathfrak{g})^\vee
\]
we can use $f$ to produce a $2$-cochain $\alpha$ in $C^2(\mathfrak{g};
(U\!\mathfrak{g})^\vee)$ by setting:

\begin{equation}
\tilde{\alpha}(x,g_1,g_2):=\sum_{(x)}\int_0^1 dt\: f\big(\text{\rm pr}(x^{(1)}),
A_t(x^{(2)},g_1),A_t(x^{(3)},g_2)\big)\label{eq:formule-definition-alpha}
\end{equation}
for all $x$ in $U\!\mathfrak{g}$, $g_1$ and $g_2$ in $\mathfrak{g}$. This 
$2$-cochain $\tilde{\alpha}$ induces a $2$-cochain $\alpha$ in $C^2(\mathfrak{g};
(U\!\mathfrak{g}^+)^\vee)$ by restriction to $U\!\mathfrak{g}^+\subset U\!\mathfrak{g}$.

\begin{ex}\label{example-formula-alpha-low-length}
If $x=\lambda$ is an element of $\mathbb{C}\subset U\!\mathfrak{g}$ then formula 
(\ref{eq:formule-definition-alpha}) reads
\[
\tilde{\alpha}(\lambda,g_1,g_2)=0
\]
If $x=g$ is an element of $\mathfrak{g}\subset U\!\mathfrak{g}$, then formula 
(\ref{eq:formule-definition-alpha}) reads
\[
\alpha(g,g_1,g_2)=\int_0^1dt\: f(g,tg_1,tg_2)=\frac{1}{3}f(g,g_1,g_2)
\]
If $x=gh$ is a product of two elements of $\mathfrak{g}$, then formula 
(\ref{eq:formule-definition-alpha}) reads
\begin{align*}
\alpha(gh,g_1,g_2)=&\int_0^1dt\: \big(\frac{1}{2}f([g,h],tg_1,tg_2)+
f(g,\frac{(t^2-t)}{2}[h,g_1],tg_2)\\
&+f(g,tg_1,\frac{(t^2-t)}{2}[h,g_2])+f(h,\frac{(t^2-t)}{2}[g,g_1],tg_2)\\
&+f(h,tg_1,\frac{(t^2-t)}{2}[g,g_2])\big)\\
=&\frac{1}{12}\big(2f([g,h],g_1,g_2)-f(g,[h,g_1],g_2)-f(g,g_1,[h,g_2])\\
&-f(h,[g,g_1],g_2)-f(h,g_1,[g,g_2])\big)
\end{align*}
\end{ex}

\begin{prop}
  If $f$ is a $3$-cocycle with values in $\C$, then the restriction of 
$d_{C\!E}(\tilde{\alpha})$ to $\mathbb{C}\otimes \Lambda^3\mathfrak{g}\subset 
U\!\mathfrak{g}\otimes \Lambda^3\mathfrak{g}$ is $f$. In particular, $\alpha$ 
is a $2$-cocycle in $C^2(\mathfrak{g};U\!\mathfrak{g}^+)$ and $[f]=\partial 
[\alpha]$ in $H^3(\mathfrak{g};\mathbb{C})$, where $\partial$ is the 
connecting homomorphism of the long exact sequence
\begin{equation}
\cdots\to H^2(\mathfrak{g};\mathbb{C})\to H^2(\mathfrak{g};(U\!\mathfrak{g})^\vee)
\to H^2(\mathfrak{g};(U\!\mathfrak{g}^+)^\vee)\overset{\partial}{\to} 
H^3(\mathfrak{g};\mathbb{C})\to H^3(\mathfrak{g};(U\!\mathfrak{g})^\vee)\to  \cdots
\label{eq:long-exact-sequence-augmentation}
\end{equation}
associated to the short exact sequence of coefficients 
(\ref{eq:short-exact-sequence-augmentation}).
\end{prop}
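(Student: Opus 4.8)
The plan is to reduce everything to the single computation stated: that the Chevalley--Eilenberg coboundary $d_{C\!E}(\tilde\alpha)$, evaluated on elements $\lambda\in\mathbb C\subset U\!\mathfrak g$ in the first tensor slot, recovers $f$. Once this is established, the remaining assertions follow formally. Indeed, $d_{C\!E}(\tilde\alpha)$ is a priori an element of $C^3(\mathfrak g;(U\!\mathfrak g)^\vee)$, and restricting the coefficient projection $(U\!\mathfrak g)^\vee\to\mathbb C$ (dual to $\eta:\mathbb C\to U\!\mathfrak g$) is exactly evaluation at $\lambda=1\in\mathbb C\subset U\!\mathfrak g$. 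So the claim ``$d_{C\!E}(\tilde\alpha)|_{\mathbb C\otimes\Lambda^3\mathfrak g}=f$'' says precisely that the image of $d_{C\!E}(\tilde\alpha)$ under $C^3(\mathfrak g;(U\!\mathfrak g)^\vee)\to C^3(\mathfrak g;\mathbb C)$ is the cocycle $f$. Now $d_{C\!E}(\tilde\alpha)$ is automatically a coboundary in $C^3(\mathfrak g;(U\!\mathfrak g)^\vee)$, hence its class in $H^3(\mathfrak g;(U\!\mathfrak g)^\vee)$ vanishes; by exactness of the long exact sequence \eqref{eq:long-exact-sequence-augmentation} the class $[f]\in H^3(\mathfrak g;\mathbb C)$ therefore lies in the image of $\partial$, and chasing the connecting homomorphism through the definition shows $\partial[\alpha]=[f]$, where $\alpha$ is the restriction of $\tilde\alpha$ to $U\!\mathfrak g^+$. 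That $\alpha$ is a genuine $2$-cocycle (not merely a cochain) is then immediate: $d_{C\!E}(\alpha)$ is the restriction of $d_{C\!E}(\tilde\alpha)$ to $U\!\mathfrak g^+$, and since $d_{C\!E}(\tilde\alpha)$ is supported on $\mathbb C$ (i.e.\ factors through the augmentation) by the key computation, it vanishes on $U\!\mathfrak g^+$.

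So the entire content is the identity $d_{C\!E}(\tilde\alpha)(1,X,Y,Z)=f(X,Y,Z)$ for $X,Y,Z\in\mathfrak g$. I would write out $d_{C\!E}(\tilde\alpha)$ using the standard Chevalley--Eilenberg formula: a sum of six terms $\pm X\cdot\tilde\alpha(1,Y,Z)$-type terms (the module action of $\mathfrak g$ on $(U\!\mathfrak g)^\vee$, which is dual to left multiplication, so $(X\cdot\beta)(x)=-\beta(Xx)$ roughly, up to the sign conventions fixed by the identification $C^\bullet(\mathfrak g;(U\!\mathfrak g)^\vee)\cong C_\bullet(\mathfrak g;U\!\mathfrak g)^\vee$) plus three terms $\pm\tilde\alpha(1,[X,Y],Z)$ with the bracket. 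Plugging in $x=1$: for the bracket terms one uses $\tilde\alpha([X,Y]\text{-slot stuff})$ but the first argument stays $1$, and $\tilde\alpha(1,-,-)=\frac13 f(\,\cdot\,,\text{pr-normalized})$ is too naive — one must instead use that $\Delta(1)=1\otimes1\otimes1$, $\operatorname{pr}(1)=0$ (since $\operatorname{pr}$ lands in $\mathfrak g$ and kills scalars), so actually $\tilde\alpha(1,g_1,g_2)=0$ as recorded in Example \ref{example-formula-alpha-low-length}. Hence the three bracket terms vanish outright. The six action terms are where $f$ appears: the action of $X$ moves $1$ to (a multiple of) $X$ inside $U\!\mathfrak g$, and then $\tilde\alpha(X,-,-)=\frac13 f(X,-,-)$ after evaluating the $\operatorname{pr}$ and $A_t$ ingredients on the grouplike/primitive pieces. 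Combining with the $t$-integral $\int_0^1 t^2\,dt=\frac13$ and the combinatorial factor from the six-term alternating sum, everything collapses to $f(X,Y,Z)$.

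The main obstacle is bookkeeping the module-action sign conventions and the Sweedler-coproduct manipulation cleanly: one needs Proposition \ref{prop-phi-t-A-t-pr}, especially that $\phi_t$ is a coalgebra map, that $\phi_0=\eta\epsilon$, $\phi_1=\operatorname{Id}$, and that $A_t$ lands in $\mathfrak g$, in order to evaluate $f(\operatorname{pr}(x^{(1)}),A_t(x^{(2)},g_1),A_t(x^{(3)},g_2))$ when $x$ is taken to be $Xy$ for $y$ a Sweedler piece of $1$ — i.e.\ essentially when $x=X\in\mathfrak g$. Here $\Delta(X)=X\otimes1+1\otimes X$ (primitivity), so the three-fold coproduct has $X$ in exactly one of the three slots; the term with $X$ in the first slot gives $f(\operatorname{pr}(X),A_t(1,g_1),A_t(1,g_2))=f(X,tg_1,tg_2)$ using $A_t(1,g)=\phi_{-t}(1)\phi_t(g)=\phi_t(g)$, and $\phi_t(g)=tg$ for $g\in\mathfrak g$ (read off from $\phi_t=\sum \frac{1}{n!}\operatorname{pr}^{\star n}$ on primitives, which truncates); the terms with $X$ in the second or third slot contribute $f(\operatorname{pr}(1),\dots)=0$. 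This yields $\tilde\alpha(X,g_1,g_2)=\int_0^1 f(X,tg_1,tg_2)\,dt=\frac13 f(X,g_1,g_2)$, matching the Example. Assembling the six action-terms of the coboundary with their signs then gives exactly $f(X,Y,Z)$; I expect this last assembly, and pinning down the precise sign in the $\mathfrak g$-action on $(U\!\mathfrak g)^\vee$ induced by the chain-cochain duality, to be the only genuinely delicate point.
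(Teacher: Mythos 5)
There is a genuine gap, and it sits exactly at the point you declare ``immediate.'' Your reduction is: compute $d_{C\!E}(\tilde\alpha)(1,g_1,g_2,g_3)=f(g_1,g_2,g_3)$, then deduce that $\alpha$ is a cocycle because ``$d_{C\!E}(\tilde\alpha)$ is supported on $\mathbb{C}$, i.e.\ factors through the augmentation, by the key computation.'' But the key computation you describe only evaluates $d_{C\!E}(\tilde\alpha)$ at $x=1$ (equivalently on the subspace $\mathbb{C}\otimes\Lambda^3\mathfrak{g}$), and this says nothing about its values on $U\!\mathfrak{g}^+\otimes\Lambda^3\mathfrak{g}$. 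Since in the sequence (\ref{eq:short-exact-sequence-augmentation}) the quotient map $(U\!\mathfrak{g})^\vee\to(U\!\mathfrak{g}^+)^\vee$ is restriction to $U\!\mathfrak{g}^+$, the assertion ``$\alpha$ is a cocycle'' is precisely the statement that $d_{C\!E}(\tilde\alpha)(x,g_1,g_2,g_3)=0$ for all $x\in U\!\mathfrak{g}^+$; and the identification $\partial[\alpha]=[f]$ via the connecting homomorphism requires knowing that $d_{C\!E}(\tilde\alpha)$ takes values in the submodule $\mathbb{C}\cdot\epsilon\subset(U\!\mathfrak{g})^\vee$, i.e.\ that $d_{C\!E}(\tilde\alpha)(x,g_1,g_2,g_3)=\epsilon(x)f(g_1,g_2,g_3)$ for \emph{all} $x\in U\!\mathfrak{g}$. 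There is no a priori reason a coboundary of an arbitrary lift of $\alpha$ should be supported on the $\mathbb{C}$-component; that is exactly the property the homotopy-theoretic construction of $\tilde\alpha$ is designed to guarantee, and it has to be proved. (Your long-exact-sequence remark has the same circularity: to conclude that the image of $[f]$ in $H^3(\mathfrak{g};(U\!\mathfrak{g})^\vee)$ vanishes because it is $[d_{C\!E}\tilde\alpha]$, you already need the general-$x$ identity. Also note that evaluation at $1$ is $\eta^\vee$, a vector-space splitting of the \emph{inclusion} $\mathbb{C}\hookrightarrow(U\!\mathfrak{g})^\vee$, not a module map appearing in the exact sequence.)

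What is missing is the actual content of the paper's proof: the computation of $d_{C\!E}(\tilde\alpha)(x,g_1,g_2,g_3)$ for a general $x$, carried out via the Hopf-algebra identity
$A_t(yg,h)-A_t(yh,g)=A_t(y,[g,h])-\sum_{(y)}[A_t(y^{(1)},g),A_t(y^{(2)},h)]$,
the cocycle condition on $f$, and the key differential identity
$\text{\rm pr}(yg)-\sum_{(y)}[\text{\rm pr}(y^{(1)}),A_t(y^{(2)},g)]=\frac{d}{dt}A_t(y,g)$,
which together collapse the six-term coboundary into
$\sum_{(x)}\int_0^1\frac{d}{dt}f\big(A_t(x^{(1)},g_1),A_t(x^{(2)},g_2),A_t(x^{(3)},g_3)\big)\,dt
=\epsilon(x)f(g_1,g_2,g_3)$,
using $A_1$ and $A_0$ from point 3 of Proposition \ref{prop-phi-t-A-t-pr}. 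Your evaluations for $x=\lambda$ and $x=g\in\mathfrak{g}$ are correct and match Example \ref{example-formula-alpha-low-length}, and the formal skeleton (connecting homomorphism, why evaluation at $1$ reads off $f$ once the coboundary is known to land in $\mathbb{C}\cdot\epsilon$) is sound; but the step from the $x\in\mathbb{C}$ case to the general case is not formal, and without it the proposition is not proved.
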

\begin{proof}
  This assertion is a consequence of a more general statement asserting that 
the right hand side of formula (\ref{eq:formule-definition-alpha}) is just 
the degree $3$ component of the dual $s_*^\vee$ of certain contracting homotopy 
$s_*$ of $(C_*(\mathfrak{g};U\!\mathfrak{g}),d_{C\!E})$ applied to $f$. However, 
for the sake of completeness, let us reprove this particular case here: For 
$x$ in $U\!\mathfrak{g}$ and $g_1$, $g_2$, $g_3$ in $\mathfrak{g}$, we have
  \begin{align*}
d_{C\!E}(\alpha)(x,g_1,g_2,g_3)=&\alpha(xg_1,g_2,g_3)-\alpha(xg_2,g_1,g_3)+\alpha(xg_3,g_1,g_2)\\
&+\alpha(x,[g_1,g_2],g_3)-\alpha(x,[g_1,g_3],g_2)+\alpha(x,g_1,[g_2,g_3])
  \end{align*}

Using the Hopf relation on $U\!\mathfrak{g}$ and the fact that the $g_i$'s
are primitive, the first part of the preceeding equality reads
\begin{align*}
\alpha(xg_1,g_2,g_3)-\alpha(xg_2,g_1,g_3)+\alpha(xg_3,g_1,g_2)=\sum_{(x)}\int_0^1 dt\:  \\
 f\big(\text{\rm pr}(x^{(1)}g_1),A_t(x^{(2)}\!\!,g_2),A_t(x^{(3)}\!\!,g_3)\big)+
f\big(\text{\rm pr}(x^{(1)}),A_t(x^{(2)}g_1,g_2),A_t(x^{(3)}\!\!,g_3)\big) +\\
+f\big(\text{\rm pr}(x^{(1)}),A_t(x^{(2)}\!\!,g_2),A_t(x^{(3)}g_1,g_3)\big)
- f\big(\text{\rm pr}(x^{(1)}g_2),A_t(x^{(2)}\!\!,g_1),A_t(x^{(3)}\!\!,g_3)\big)-\\
f\big(\text{\rm pr}(x^{(1)}),A_t(x^{(2)}g_2,g_1),A_t(x^{(3)}\!\!,g_3)\big) 
-f\big(\text{\rm pr}(x^{(1)}),A_t(x^{(2)}\!\!,g_1),A_t(x^{(3)}g_2,g_3)\big)\\
+ f\big(\text{\rm pr}(x^{(1)}g_3),A_t(x^{(2)}\!\!,g_1),A_t(x^{(3)}\!\!,g_2)\big)+
f\big(\text{\rm pr}(x^{(1)}),A_t(x^{(2)}g_3,g_1),A_t(x^{(3)}\!\!,g_2)\big) \\
+f\big(\text{\rm pr}(x^{(1)}),A_t(x^{(2)}\!\!,g_1),A_t(x^{(3)}g_3,g_2)\big)
\end{align*}

But for $y$ in $U\!\mathfrak{g}$ and $g$, $h$ in $\mathfrak{g}$, properties 
{\it 4.} and {\it 5.} of $\phi_t$ and $A_t$ listed in Proposition \ref{prop-phi-t-A-t-pr} 
imply that
\begin{align*}
A_t(yg,h)-A_t(yh,g)=&\sum_{(y)}\phi_{-t}(y^{(1)}g)\phi_t(y^{(2)}h)-\phi_{-t}(y^{(1)}h)
\phi_t(y^{(2)}g)+\\
&+\phi_{-t}(y^{(1)})\phi_t(y^{(2)}gh)-\phi_{-t}(y^{(1)})\phi_t(y^{(2)}hg)\\
=&\sum_{(y)}\phi_{-t}(y^{(1)}g)\phi_t(y^{(2)}h)-\phi_{-t}(y^{(1)}h)\phi_t(y^{(2)}g)+A_t(y,[g,h])\\
=&\sum_{(y)}\phi_{-t}(y^{(1)}g)\phi_t(y^{(2)})\phi_{-t}(y^{(3)})\phi_t(y^{(4)}h)-\\
&\phi_{-t}(y^{(1)}h)\phi_t(y^{(2)})\phi_{-t}(y^{(3)})\phi_t(y^{(4)}g)+A_t(y,[g,h])\\
=&A_t(y,[g,h])-\sum_{(y)}[A_{t}(y^{(1)},g),A_t(y^{(2)},h)]\\
\end{align*}
Thus, all terms of the form $f(\cdots, A_t(x^{(i)}g_j,g_k),\cdots)$ appearing in 
$\alpha(xg_1,g_2,g_3)-\alpha(xg_2,g_1,g_3)+\alpha(xg_3,g_1,g_2)$ can be sorted 
by pairs $(j,k)$ to cancel the corresponding term of the form 
$\alpha(\cdots,[g_j,g_k],\cdots)$ in $d_{C\!E}(\alpha)(x,g_1,g_2,g_3)$. 
Hence, we are left with
\begin{align*}
  d_{C\!E}(\alpha)(x,g_1,g_2,g_3)=\sum_{(x)}\int_0^1 dt\:
f\big(\text{\rm pr}(x^{(1)}g_1),A_t(x^{(2)}\!\!,g_2),A_t(x^{(3)}\!\!,g_3)\big)-\\
f\big(\text{\rm pr}(x^{(1)}),[A_t(x^{(2)}\!\!,g_1),A_t(x^{(3)}\!\!,g_2)],
A_t(x^{(4)}\!\!,g_3)\big) 
- f\big(\text{\rm pr}(x^{(1)}g_2),A_t(x^{(2)}\!\!,g_1),
A_t(x^{(3)}\!\!,g_3)\big)\\
+f\big(\text{\rm pr}(x^{(1)}),A_t(x^{(2)}\!\!,g_1),[A_t(x^{(3)},g_2)A_t(x^{(4)},g_3)]\big)
+ f\big(\text{\rm pr}(x^{(1)}g_3),A_t(x^{(2)}\!\!,g_1),A_t(x^{(3)}\!\!,g_2)\big)\\
+f\big(\text{\rm pr}(x^{(1)}),[A_t(x^{(2)}\!\!,g_1),A_t(x^{(3)}\!\!,g_3)],
A_t(x^{(4)}\!\!,g_2)\big)
\end{align*}
Since $f$ is a cocycle, we have that
\[
d_{C\!E}(f)\big(\text{\rm pr}(x^{(1)}),A_t(x^{(2)},g_1),A_t(x^{(3)},g_2),A_t(x^{(4)},g_3)\big)=0
\]
which enables to rewrite the three terms of $d_{C\!E}(\alpha)(x,g_1,g_2,g_3)$ 
involving Lie brackets to obtain
\begin{align*}
  d_{C\!E}(\alpha)(x,g_1,g_2,g_3)=\sum_{(x)}\int_0^1 dt\:
f\big(\text{\rm pr}(x^{(1)}g_1),A_t(x^{(2)}\!\!,g_2),A_t(x^{(3)}\!\!,g_3)\big)-\\
f\big([\text{\rm pr}(x^{(1)}),A_t(x^{(2)}\!\!,g_1)],A_t(x^{(3)}\!\!,g_2),A_t(x^{(4)}\!\!,g_3)\big) 
-f\big(\text{\rm pr}(x^{(1)}g_2),A_t(x^{(2)}\!\!,g_1),A_t(x^{(3)}\!\!,g_3)\big)\\
+f\big([\text{\rm pr}(x^{(1)}),A_t(x^{(2)}\!\!,g_2)],A_t(x^{(3)},g_1),A_t(x^{(4)},g_3)\big)
+ f\big(\text{\rm pr}(x^{(1)}g_3),A_t(x^{(2)}\!\!,g_1),A_t(x^{(3)}\!\!,g_2)\big)-\\
f\big([\text{\rm pr}(x^{(1)}),A_t(x^{(2)}\!\!,g_3)],A_t(x^{(3)}\!\!,g_1),A_t(x^{(4)}\!\!,g_2)\big)
\end{align*}
But one easily checks that for all $y$ in $U\!\mathfrak{g}$ and $g$ in $\mathfrak{g}$
\[
\text{\rm pr}(yg)-\sum_{(y)} [\text{\rm pr}(y^{(1)}),A_t(y^{(2)},g)]=\frac{d}{dt}A_t(y,g)
\]
so that, using point {\it 3.} of Proposition \ref{prop-phi-t-A-t-pr}
\begin{align*}
  d_{C\!E}(\alpha)(x,g_1,g_2,g_3)=\sum_{(x)}\int_0^1 dt\: 
f\big(\frac{d}{dt}A_t(x^{(1)},g_1),A_t(x^{(2)}\!\!,g_2),A_t(x^{(3)}\!\!,g_3)\big)+\\
f\big(A_t(x^{(1)},g_1),\frac{d}{dt}A_t(x^{(2)}\!\!,g_2),A_t(x^{(3)}\!\!,g_3)\big)
+f\big(A_t(x^{(1)},g_1),A_t(x^{(2)}\!\!,g_2),\frac{d}{dt}A_t(x^{(3)}\!\!,g_3)\big)\\
=\sum_{(x)}\int_0^1 dt\: \frac{d}{dt}
f\big(A_t(x^{(1)}\!\!,g_1),A_t(x^{(2)}\!\!,g_2),A_t(x^{(3)}\!\!,g_3)\big)\\
=\sum_{(x)}f\big(A_1(x^{(1)}\!\!,g_1),A_1(x^{(2)}\!\!,g_2),A_1(x^{(3)}\!\!,g_3)\big)-
f\big(A_0(x^{(1)}\!\!,g_1),A_0(x^{(2)}\!\!,g_2),A_0(x^{(3)}\!\!,g_3)\big)\\
= \epsilon(x) f(g_1,g_2,g_3)
\end{align*}
from which the proposition follows immediately.
\end{proof}

 Together with Theorem \ref{construction_thm}, this shows:

\begin{cor}
  If $f$ is a $3$-cocycle, then the Yoneda product
\[
0\to \mathbb{C}\to (U\!\mathfrak{g})^\vee \to (U\!\mathfrak{g}^+)^\vee\oplus_\alpha \mathfrak{g} 
\to \mathfrak{g}\to 0
\]
is a crossed module of Lie algebras whose equivalence class represents $[f]$.
\end{cor}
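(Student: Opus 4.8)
The plan is to combine the preceding Proposition with Theorem~\ref{construction_thm} in the most direct way possible. By the Proposition, $\alpha \in C^2(\mathfrak{g};(U\!\mathfrak{g}^+)^\vee)$ is a genuine $2$-cocycle, so it defines an abelian extension
\[
0\to (U\!\mathfrak{g}^+)^\vee \to (U\!\mathfrak{g}^+)^\vee \oplus_\alpha \mathfrak{g}\to \mathfrak{g}\to 0.
\]
On the other hand, the short exact sequence of $\mathfrak{g}$-modules (\ref{eq:short-exact-sequence-augmentation}) is a short exact sequence of abelian Lie algebras. Thus we are exactly in the setting of Section~2.1, with $V_1 = \mathbb{C}$, $V_2 = (U\!\mathfrak{g})^\vee$, $V_3 = (U\!\mathfrak{g}^+)^\vee$, and the abelian extension given by $\alpha$. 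Theorem~\ref{construction_thm} then immediately tells us that the Yoneda splice
\[
0\to \mathbb{C}\to (U\!\mathfrak{g})^\vee \to (U\!\mathfrak{g}^+)^\vee \oplus_\alpha \mathfrak{g}\to \mathfrak{g}\to 0
\]
is a crossed module of Lie algebras, and that its characteristic $3$-class is $\partial[\alpha]$, the image of $[\alpha]$ under the connecting homomorphism $\partial\colon H^2(\mathfrak{g};(U\!\mathfrak{g}^+)^\vee)\to H^3(\mathfrak{g};\mathbb{C})$ of the long exact sequence (\ref{eq:long-exact-sequence-augmentation}).

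It remains only to identify this class. But the Proposition states precisely that $[f] = \partial[\alpha]$ in $H^3(\mathfrak{g};\mathbb{C})$. Hence the equivalence class of the constructed crossed module, which by Gerstenhaber's theorem is classified by its $3$-class $\partial[\alpha] = [f]$, represents $[f]$. This finishes the argument.

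There is essentially no obstacle here: the corollary is a formal concatenation of the two results it cites. The only points that deserve a word of care are bookkeeping ones: checking that the $\mathfrak{g}$-module structures in (\ref{eq:short-exact-sequence-augmentation}) match the conventions of Theorem~\ref{construction_thm} (i.e. that the sequence, regarded as a sequence of abelian Lie algebras with $\mathfrak{g}$-action, is the one feeding into the Yoneda product), and that the connecting homomorphism $\partial$ appearing in the statement of Theorem~\ref{construction_thm} is literally the same map as the $\partial$ in (\ref{eq:long-exact-sequence-augmentation}) — both are induced by the Bockstein of (\ref{eq:short-exact-sequence-augmentation}), so this is immediate. Thus the proof is just: "Apply the Proposition and Theorem~\ref{construction_thm}."
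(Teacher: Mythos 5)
Your proposal is correct and follows exactly the paper's own (one-line) argument: the paper derives the corollary by combining the preceding proposition, which shows $\alpha$ is a $2$-cocycle with $[f]=\partial[\alpha]$, with Theorem~\ref{construction_thm}, which identifies the class of the spliced crossed module as $\partial[\alpha]$. Your additional bookkeeping remarks about matching the module structures and the connecting homomorphism are sensible but not needed beyond what the paper already asserts.
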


In other words, we have constructed abelian representatives (of the equivalence 
class of crossed modules associated) to each 3-cohomology class given by 
some constant-valued 3-cocycle $f$. 

\subsection{The example of the Cartan cocycle of $\mathfrak{sl}_2(\mathbb{C})$}

In this subsection, $\mathfrak{g}:=\mathfrak{sl}_2(\mathbb{C})$ and $f:\Lambda^3\mathfrak{g}\to\mathbb{C}$ is the
Cartan $3$-cocycle defined by
\[
f(g_1,g_2,g_3):=\kappa(g_1,[g_2,g_3])
\]
for all $g_1$, $g_2$ and $g_3$ in $\mathfrak{g}$, where $\kappa$ is the Killing form
of $\mathfrak{g}=\mathfrak{sl}_2(\mathbb{C})$ (In fact, $f$ is, up to a non-zero scalar, the only non-trivial possible $3$-cochain). Our goal is to study the $2$-cocycle $\alpha$ lifting $f$ in order to get a concrete description of the crossed module 
encoding the Cartan cocycle. More precisely, we want to determine the values of
\[
\alpha(x,g_1,g_2):=\int_0^1dt\: \kappa(\text{\rm pr}(x^{(1)}),[A_t(x^{(2)},g_1),A_t(x^{(3)},g_2)])
\]
when $g_1$, $g_2$ run over a given basis of $\mathfrak{g}$ and $x$ runs over the 
associated PBW basis of $U\!\mathfrak{g}^+$.
Denote by $(X,Y,H)$ the standard ordered basis of $\mathfrak{sl}_2(\mathbb{C})$ 
which satisfies the relations
\[
[X,Y]=H\:,\qquad [H,X]=2X\:,\qquad [H,Y]=-2Y.
\]
In this basis, the problem amounts to determine the values of the following complex numbers
\[
B_{XY}(a,b,c):=\alpha(X^aY^bH^c,X,Y)
\]
and
\[
B_{XH}(a,b,c):=\alpha(X^aY^bH^c,X,H)
\]
and
\[
B_{YH}(a,b,c):=\alpha(X^aY^bH^c,Y,H)
\]
for all natural numbers $a$, $b$ and $c$.

If obtaining a general formula for the values of $B_{XY}$, $B_{XH}$ and $B_{YH}$ seems out of reach for the moment, in particular because the combinatorics appearing in the expression (\ref{eq:general-formula-eulerian-idem}) of $\text{\rm pr}(x)$, for a word $x$ of $U\!\mathfrak{g}$, get more and more complicated as the length of $x$ increases, we can at least show that these values are ``often'' $0$.

\begin{prop}\label{proposition-vanishing-cartan-sl2}Let $a$, $b$ and $c$ be three natural numbers.
  \begin{enumerate}
  \item If $a\neq b$, then
\[
B_{XY}(a,b,c)=0
\]
  \item If $a\neq b-1$, then
\[
B_{XH}(a,b,c)=0
\]
  \item If $a\neq b+1$, then
\[
B_{YH}(a,b,c)=0
\]
  \end{enumerate}
\end{prop}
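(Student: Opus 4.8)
The plan is to exploit the weight grading on $U\mathfrak{sl}_2(\mathbb{C})$ coming from the adjoint action of $H$. Assign to $X$ the weight $+2$, to $Y$ the weight $-2$ and to $H$ the weight $0$, so that the PBW monomial $X^aY^bH^c$ has weight $2(a-b)$. The Killing form $\kappa$ pairs weight $w$ with weight $-w$ nontrivially only when the two weights are opposite, the bracket $[-,-]$ adds weights, and the maps $\text{\rm pr}$, $\phi_t$, $A_t(-,g)$ are all built from the (iterated) coproduct, the product and $\eta\epsilon$, hence each is homogeneous of weight $0$ on the first slot and shifts the total weight only by the weight of the inserted generator $g$. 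Concretely, $A_t(x^{(i)},g)$ has weight $(\text{weight of }x^{(i)}) + (\text{weight of }g)$, and since $\Delta$ is weight-preserving, $\sum_{(x)}$ distributes the weight of $x$ among $x^{(1)},x^{(2)},x^{(3)}$ additively. Therefore each summand of $\alpha(x,g_1,g_2)=\int_0^1 dt\,\kappa(\text{\rm pr}(x^{(1)}),[A_t(x^{(2)},g_1),A_t(x^{(3)},g_2)])$ is a pairing $\kappa(u,[v,w])$ where $\text{wt}(u)+\text{wt}(v)+\text{wt}(w) = \text{wt}(x)+\text{wt}(g_1)+\text{wt}(g_2)$, and this pairing vanishes unless that total weight is $0$.

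The three cases are then immediate. For (1), $g_1=X$, $g_2=Y$ contribute weights $+2$ and $-2$, summing to $0$, so the whole expression vanishes unless $\text{wt}(X^aY^bH^c)=2(a-b)=0$, i.e. unless $a=b$; equivalently $B_{XY}(a,b,c)=0$ whenever $a\neq b$. For (2), $g_1=X$, $g_2=H$ contribute $+2$ and $0$, so we need $2(a-b)+2=0$, i.e. $a=b-1$; hence $B_{XH}(a,b,c)=0$ unless $a=b-1$. For (3), $g_1=Y$, $g_2=H$ contribute $-2$ and $0$, forcing $2(a-b)-2=0$, i.e. $a=b+1$; hence $B_{YH}(a,b,c)=0$ unless $a=b+1$.

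The only genuine point to nail down carefully is the homogeneity claim: that $\text{\rm pr}$, $\phi_t$ and $A_t(-,g)$ respect the weight grading in the stated way. I would verify this by noting that $U\mathfrak{g}$ is a $\mathbb{Z}$-graded algebra and coalgebra under the $H$-weight (the grading is by the eigenvalues of $\text{\rm ad}(H)$ extended as a derivation, and the coproduct is graded because $H$ is primitive), that $\eta\epsilon$ is the projection onto weight $0$ hence graded, and that convolution powers and products of graded maps are graded; since $\text{\rm pr}$ and $\phi_t$ are (convergent on each PBW word, as they are given by finite sums there) built entirely from $\text{\rm Id}$, $\eta\epsilon$ and $\star$, they are weight $0$, and $A_t(x,g)=\sum_{(x)}\phi_{-t}(x^{(1)})\phi_t(x^{(2)}g)$ then has weight $\text{wt}(x)+\text{wt}(g)$. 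The remaining ingredients — that $\kappa$ is supported on opposite weights and that $[-,-]$ is additive in weight — are the defining properties of the root space decomposition of $\mathfrak{sl}_2(\mathbb{C})$. I expect no real obstacle beyond bookkeeping; the argument is essentially a one-line weight count once the grading is set up.
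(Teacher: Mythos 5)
Your proof is correct, and it reaches the conclusion by a genuinely cleaner route than the paper. The paper first proves a lemma classifying non-vanishing iterated brackets $[X_1,\dots,X_n]$ in $\mathfrak{sl}_2(\C)$ by the counts $a,b$ of $X$'s and $Y$'s (nonzero only if $|a-b|\le 1$, and then proportional to $H$, $X$ or $Y$ accordingly); it then reads off which basis vector each of $\text{\rm pr}(x^{(1)})$, $A_t(x^{(2)},g_1)$, $A_t(x^{(3)},g_2)$ must be proportional to, invokes the skew-symmetry of $f$ to force the three arguments to be pairwise distinct, and finishes with a six-case enumeration of the admissible triples $(a_i-b_i)$. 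You replace the bracket lemma, the skew-symmetry step and the case analysis with a single weight count: the $\mathrm{ad}(H)$-grading makes $\eta\epsilon$, $\text{\rm pr}$, $\phi_t$ homogeneous of degree zero and $A_t(-,g)$ homogeneous of degree $\mathrm{wt}(g)$, while the invariance of $\kappa$ forces $f=\kappa(-,[-,-])$ to vanish off total weight zero. Both arguments ultimately rest on the same root-space structure (the paper's bracket lemma is the weight statement in disguise), but yours is shorter, avoids the typo-prone enumeration, and generalizes immediately to any Lie algebra graded by a Cartan subalgebra with an invariant form, whereas the paper's lemma is specific to $\mathfrak{sl}_2$. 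The one point you rightly flag as needing care --- that $\eta\epsilon$ is graded of degree zero (because the weight-$w$ component of $U\mathfrak{g}$ for $w\neq 0$ lies in $U\mathfrak{g}^+$ and is killed by $\epsilon$), hence so are all convolution series in $\mathrm{Id}$ and $\eta\epsilon$ --- is indeed the only thing to check, and your sketch of it is sound.
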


Before entering the proof of the proposition, let us establish the following lemma which shows that non-trivial iterated brackets in $\mathfrak{sl}_2(\mathbb{C})$ must have a very constrained form.
\begin{lem}
  Let $(X_1, ...,X_n)$ be a sequence of basis vectors in $\{X,Y,H\}^n$ and denote by $a:=\text{\rm card}\{ i\:/X_i=X\}$ and $b:=\text{\rm card}\{ i\:/X_i=Y\}$ respectively the number of $X$'s and the number of $Y$'s appearing in this sequence. If the iterated bracket
\[
[X_1,\cdots,X_n]:=[X_1,[X_2,[\cdots,[X_{n-1},X_n]\cdots]]]
\]
is not zero, then
\[
|a-b|\leq 1
\]
In this case, there exists a non-zero complex number $\lambda$ such that
\[
[X_1,\cdots,X_n]=\left\{
\begin{array}{cl}
  \lambda H&\text{if $a=b$,}\\
 \lambda X &\text{if $a=b+1$,}\\
 \lambda Y &\text{if $a=b-1$.}
\end{array}
\right.
\]
\end{lem}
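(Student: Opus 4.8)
The plan is to exploit the weight grading of $\mathfrak{sl}_2(\mathbb{C})$ with respect to $\mathrm{ad}_H$. First I would record that $X$, $H$, $Y$ are eigenvectors of $\mathrm{ad}_H$ with eigenvalues $+2$, $0$, $-2$ respectively, so that $\mathfrak{sl}_2(\mathbb{C})=\mathfrak{g}_{2}\oplus\mathfrak{g}_0\oplus\mathfrak{g}_{-2}$ becomes a $\mathbb{Z}$-graded Lie algebra, where we place $X$ in degree $+2$, $H$ in degree $0$ and $Y$ in degree $-2$, and each homogeneous component is one-dimensional. The key structural facts to isolate are that the bracket respects this grading, $[\mathfrak{g}_p,\mathfrak{g}_q]\subseteq\mathfrak{g}_{p+q}$, and that $\mathfrak{g}_p=0$ for every $p\notin\{-2,0,2\}$.

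Next I would show, by a trivial induction on $n$ using $[\mathfrak{g}_p,\mathfrak{g}_q]\subseteq\mathfrak{g}_{p+q}$, that the iterated bracket $[X_1,\cdots,X_n]$ lies in $\mathfrak{g}_d$ with $d=\sum_{i=1}^n\deg X_i=2a-2b$ (the $H$'s contributing $0$ to the sum). If $|a-b|\geq 2$ then $d\notin\{-2,0,2\}$, hence $\mathfrak{g}_d=0$ and the iterated bracket vanishes; contrapositively, non-vanishing forces $|a-b|\leq 1$. In the three remaining cases $d\in\{0,2,-2\}$ the bracket lies in the one-dimensional space $\mathfrak{g}_d$, which is respectively $\mathbb{C}H$, $\mathbb{C}X$, $\mathbb{C}Y$; since by hypothesis it is nonzero, it equals $\lambda H$, $\lambda X$ or $\lambda Y$ for a nonzero scalar $\lambda$, which is exactly the claimed trichotomy (and the case distinction $a=b$, $a=b+1$, $a=b-1$ corresponds to $d=0$, $d=2$, $d=-2$).

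I do not anticipate any real obstacle here: the statement is a direct consequence of the weight decomposition of $\mathfrak{sl}_2(\mathbb{C})$ together with the one-dimensionality of its weight spaces. The only two points deserving an explicit word are the bookkeeping that the degree of an iterated bracket is the sum of the degrees of its entries (immediate by induction on $n$), and the elementary remark that a nonzero element of a one-dimensional vector space is a nonzero scalar multiple of any chosen basis vector, which is what guarantees $\lambda\neq 0$.
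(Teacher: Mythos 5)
Your proof is correct, and it takes a genuinely different route from the paper's. You use the $\mathbb{Z}$-grading of $\mathfrak{sl}_2(\mathbb{C})$ by $\mathrm{ad}_H$-weight, observe that an iterated bracket of homogeneous elements is homogeneous of degree $\sum_i \deg X_i = 2(a-b)$, and conclude from the fact that the only nonzero weight spaces are $\mathfrak{g}_{-2}=\mathbb{C}Y$, $\mathfrak{g}_0=\mathbb{C}H$, $\mathfrak{g}_2=\mathbb{C}X$, each one-dimensional. The paper instead argues by induction on $n$: it first reduces to sequences without $H$ by claiming that inserting an $H$ multiplies the iterated bracket by $2$ (in fact the correct factor is $\pm 2$ depending on position, or $0$, a sloppiness your argument avoids entirely), and then runs a case analysis on whether the inner bracket $[X_3,\dots,X_n]$ is proportional to $H$, $X$ or $Y$, checking which pairs $(X_1,X_2)$ can produce a nonzero outer bracket. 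Your grading argument is shorter, cleaner, and generalizes immediately to the root space decomposition of any semisimple Lie algebra; the paper's induction yields slightly more combinatorial information about which leading pairs can occur, but at the cost of length and of the imprecise ``erase the $H$'' step. Both establish the lemma; yours is the more conceptual proof.
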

{\bf{Proof of the lemma.}\quad} 
Let $(X_1, ...,X_n)$ be a sequence of basis vectors such that $[X_1,\cdots,X_n]\neq 0$.
We can assume that none of the $X_i$'s is equal to $H$ since the value of an iterated bracket containing an $H$ is equal to twice the value of the iterated bracket obtained by erasing this $H$ i.e.
\[
[Z_1,\cdots,Z_i,H,Z_{i+1},\cdots, Z_m]=2[Z_1,\cdots,Z_m]
\]
for all $Z_1$, ..., $Z_m$ in $\{X,Y,H\}$. We now proceed by induction on the length $n$ of the sequence $(X_1,\cdots,X_n)$. Assume that the proposition holds for all values of $k$ strictly lower than a given $n$, in particular for $n=n-2$. Then the non-zero iterated bracket we are interested in takes the form
\[
[X_1,[X_2,[X_3,\cdots,X_n]]]\neq 0
\]
If we denote by $a'$ (resp. $b'$) the number of $X$'s (resp. the number of $Y$'s) in the sequence $(X_3,\cdots,X_N)$, applying the conclusion of the proposition (for $k=n-2$) to the necessarily non-zero bracket $[X_3,\cdots,X_n]$ allows to distinguish $3$ cases
\begin{itemize}
  \item $[X_3,\cdots,X_n]=\lambda' H$ for some non zero compex number $\lambda'$. This implies that $a'=b'$. In addition, we see that in order to have 
\[
0\neq [X_1,[X_2,[X_3,\cdots,X_n]]]=\lambda' [X_1,[X_2,H]]=2\lambda' \pm [X_1,X_2]
\]
$X_1$ and $X_2$ must be different. Hence $a=a'+1=b'+1=b$ i.e. $a=b$ and $[X_1,[X_2,[X_3,\cdots,X_n]]]=\lambda H$ with $\lambda=\pm2\lambda'$.
  \item $[X_3,\cdots,X_n]=\lambda' X$ for some non zero compex number $\lambda'$. This corresponds to the case $a'=b'+1$. In this case, we see that the only possible choice of $X_1$ and $X_2$ leading to a non zero value of $[X_1,[X_2,[X_3,\cdots,X_n]]]$ is given by taking $(X_1,X_2)=(X,Y)$, which leads to 
\[
[X_1, ...,X_n]=2\lambda' X\quad\text{and}\quad a=a'+1=b'+2=b+1
\]
or taking $(X_1,X_2)=(Y,Y)$ for which
\[
[X_1, ...,X_n]=-2\lambda' Y\quad\text{and}\quad a=a'=b'+1=b-1
\]
In both cases, the conclusion of the lemma holds.
\item $[X_3,\cdots,X_n]=\lambda' Y$ for some non zero compex number $\lambda'$. In this case, $a'=b'-1$ and the only possible choices for $(X_1,X_2)$ are $(X_1,X_2)=(Y,X)$, which leads to
\[
[X_1, ...,X_n]=2\lambda' Y\quad\text{and}\quad a=b-1
\]
and $(X_1,X_2)=(X,X)$, for which
\[
[X_1, ...,X_n]=-2\lambda' X\quad\text{and}\quad a=b+1
\]
Again, we see that theses values of $a$, $b$ and $[X_1,\cdots,X_n]$ satisfy the conclusion of the lemma.
\end{itemize}
The initialization of the induction is left to the reader.
\fin

{\bf Proof of proposition \ref{proposition-vanishing-cartan-sl2}.}
In view of formula (\ref{eq:formule-definition-alpha}) defining $\alpha$ and of the definition of the coproduct on $U\!\mathfrak{g}$, it is clear that $B_{XY}(a,b,c):=\alpha(X^aY^bH^c,X,Y)$ can be written as a sum of the form
\[
B_{XY}(a,b,c)=\sum_{\bar{a},\bar{b},\bar{c}} \lambda_{\bar{a},\bar{b},\bar{c}} f(\text{\rm pr}(X^{a_1}Y^{b_1}H^{c_1}),A_t(X^{a_2}Y^{b_2}H^{c_2},X),A_t(X^{a_3}Y^{b_3}H^{c_3},Y))
\]
where the summation runs over all triples of integers $\bar{a}:=(a_1,a_2,a_3)$, $\bar{b}:=(b_1,b_2,b_3)$ and $\bar{c}:=(c_1,c_2,c_3)$ such that $a_1+a_2+a_3=a$,  $b_1+b_2+b_3=b$ and  $c_1+c_2+c_3=c$ and where the $\lambda_{\bar{a},\bar{b},\bar{c}}$'s are complex numbers. Our aim is to determine which terms of this sum have a chance to be non-zero. For each choice of triples $\bar{a}$, $\bar{b}$ and $\bar{c}$, formula (\ref{eq:general-formula-eulerian-idem}) and the lemma imply that $\text{\rm pr}(X^{a_1}Y^{b_1}H^{c_1})$ can be non-zero only when $|a_1-b_1|\leq 1$ and in this case

\begin{equation}
\text{\rm pr}(X^{a_1}Y^{b_1}H^{c_1})=\left\{
  \begin{array}{cl}
 \lambda H&\text{if $a_1=b_1$,}\\
 \lambda X &\text{if $a_1=b_1+1$,}\\
 \lambda Y &\text{if $a_1=b_1-1$.}
   \end{array}\right.\label{eq:PR-formule-cartan-SL2}
\end{equation}
for some complex number $\lambda$.
Moreover, one can check that by definition of the operator $A_t$, $A_t(X^{a_2}Y^{b_2}H^{c_2},X)$ is a linear combination of words of length $a_1+a_2+c_2+1$ containing exactly $a_2+1$ copies of $X$, $b_2$ copies of $Y$ and $c_2$ copies of $H$. Since it is in $\mathfrak{g}$ by proposition \ref{prop-phi-t-A-t-pr}, it is equal to its own image under $\text{\rm pr}$ i.e.
\[
A_t(X^{a_2}Y^{b_2}H^{c_2},X)=\text{\rm pr}(A_t(X^{a_2}Y^{b_2}H^{c_2},X))
\]
Thus, the lemma implies that $A_t(X^{a_2}Y^{b_2}H^{c_2},X)$ can be non zero only when $|a_2+1-b_2|\leq 1$ and in this case
\begin{equation}
 A_t(X^{a_2}Y^{b_2}H^{c_2},X)=\left\{
\begin{array}{cl}
 \beta H&\text{if $a_2=b_2-1$,}\\
 \beta X &\text{if $a_2=b_2$,}\\
 \beta Y &\text{if $a_2=b_2-2$.}
\end{array}\right.\label{eq:ATX-formule-cartan-SL2}
\end{equation}
for some complex number $\beta$. Similarly, one can prove that the only non-zero possible values of $A_t(X^{a_3}Y^{b_3}H^{c_3},Y)$ are given by
\begin{equation}
 A_t(X^{a_3}Y^{b_3}H^{c_3},Y)=\left\{
\begin{array}{cl}
 \mu H&\text{if $a_3=b_3+1$,}\\
 \mu X &\text{if $a_3=b_3+2$,}\\
 \mu Y &\text{if $a_3=b_3$.}
\end{array}\right.\label{eq:ATY-formule-cartan-SL2}
\end{equation}
for some complex number $\mu$. Since $f$ is skew-symmetric, the term
\[
f(\text{\rm pr}(X^{a_1}Y^{b_1}H^{c_1}),A_t(X^{a_2}Y^{b_2}H^{c_2},X),A_t(X^{a_3}Y^{b_3}H^{c_3},Y))
\]
can be non-zero only when its three arguments are all different. Equations (\ref{eq:PR-formule-cartan-SL2}), (\ref{eq:ATX-formule-cartan-SL2}) and (\ref{eq:ATY-formule-cartan-SL2}) show that this can happen only if
\[
  \begin{array}{cl}
& (a_1,a_2,a_3)=(b_1,b_2,b_3)\\
\text{or}&(a_1,a_2,a_3)=(b_1,b_2-2,b-3+2)\\
\text{or}&(a_1,a_2,a_3)=(b_1+1,b_2-1,b_3)\\
\text{or}&(a_1,a_2,a_3)=(b_1+1,b_2-2,b_3+1)\\
\text{or}&(a_1,a_2,a_3)=(b_1-1,b_2-1,b_3+2)\\
\text{or}&(a_1,a_2,a_3)=(b_1-1,b_2,b_3+1)\\
  \end{array}
\]
Each of these six cases implies that $a_1+a_2+a_3=b_1+b_2+b_3$ i.e. $a=b$, as wanted to prove point $1.$ of the proposition. The two other points can be established exactly in the same fashion.
\fin

\begin{ex}\label{example2-calcul-alphaXY-X-Y-SL2}
 Using the formula for $\alpha(gh,g_1,g_2)$ given at the end of example \ref{example-formula-alpha-low-length}, we see that if $(g_1,g_2)=(X,Y)$, then the only non trivial value $\alpha(gh,g_1,g_2)$ is given by
\[
\alpha(XY,X,Y)=\frac{1}{12}(2f(H,X,Y)+f(X,H,Y)-f(Y,X,H))=\frac{1}{6}f(X,Y,H)=\frac{4}{3}
\]
\end{ex}

\section{Application}

Here we apply the construction of the abelian representative of the equivalence class
of the crossed module associated with a 3-cohomology class with values in the trivial
module to the construction of quasi-invariant tensors, see \cite{CirMar}. 

\subsection{Preliminairies}

Let $\mu:{\mathfrak m}\to{\mathfrak n}$ be a crossed module. The diagonal map 
$U{\mathfrak n}\to U({\mathfrak n}^{\oplus n})$, $x\mapsto x\otimes 1\otimes\ldots\otimes 1+\ldots
+1\otimes\ldots\otimes1\otimes x$ will be denoted by $\triangle^n$. Denote by ${\mathfrak e}$ 
the semi-direct product Lie algebra ${\mathfrak m}\rtimes{\mathfrak n}$. 

The Lie algebra morphism $\beta:{\mathfrak e}\to{\mathfrak n}$ given for $X\in{\mathfrak n}$ and 
$v\in{\mathfrak m}$ by
$$\beta(v,X)\,=\,\mu(v)+X,$$
entends to an algebra morphism $\beta:U{\mathfrak e}\to U{\mathfrak n}$ and further to 
$\beta:U({\mathfrak e}^{\oplus n})\to U({\mathfrak n}^{\oplus n})$. 

Let $A_n$ be the smallest vector subspace of $U({\mathfrak e}^{\oplus n})$ containing all 
elements of the form $xvy$ with $x,y\in U({\mathfrak n}^{\oplus n})$ and 
$v\in{\mathfrak m}^{\oplus n}$. 

\begin{defi}
The vector space ${\mathcal U}^{(n)}$ is the quotient of $A_n$ with respect to the relations
$$x\mu(u)yvz\,=\,xuy\mu(v)z$$
for all $x,y,z\in U({\mathfrak n}^{\oplus n})$ and all $u,v\in{\mathfrak m}^{\oplus n}$.
\end{defi}
We suggest that the reader reads some examples in Section 4.1 of \cite{CirMar} in order
to better understand this notion. The upshot of the vector space ${\mathcal U}^{(n)}$ is 
the following Lemma:

\begin{lem}[Lemma 27 in \cite{CirMar}]   \label{(66)}
For all $a,b\in{\mathcal U}^{(n)}$, we have 
$$\beta(a)b\,=\,a\beta(b).$$
\end{lem}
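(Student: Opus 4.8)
\textbf{Proof plan for Lemma \ref{(66)}.}

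The plan is to reduce the identity $\beta(a)b = a\beta(b)$ to a statement about generators of ${\mathcal U}^{(n)}$, exploiting bilinearity in $a$ and $b$. By definition, ${\mathcal U}^{(n)}$ is spanned by classes of elements $xvy$ with $x,y\in U({\mathfrak n}^{\oplus n})$ and $v\in{\mathfrak m}^{\oplus n}$, so it suffices to prove the identity when $a=xvy$ and $b=x'v'y'$ are such generators (one should also handle, as a degenerate case, elements of $U({\mathfrak n}^{\oplus n})$ itself, on which $\beta$ is the canonical algebra map and where the identity is just associativity in $U({\mathfrak n}^{\oplus n})$). The key structural input is the defining relation $x\mu(u)yvz = xuy\mu(v)z$ of ${\mathcal U}^{(n)}$: it lets one ``move a $\mu$ across'' from one ${\mathfrak m}$-slot to another, and more usefully, taking $u$ or $v$ to range over ${\mathfrak m}^{\oplus n}$, it says that inside ${\mathcal U}^{(n)}$ the element $\mu(u)\in U({\mathfrak n}^{\oplus n})$ acts the same whether we write it on the $U{\mathfrak n}$-side or absorb it as $u$ on the ${\mathfrak m}$-side adjacent to another ${\mathfrak m}$-factor.

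The core computation is then the following. Write $a = xvy$ with $v\in{\mathfrak m}^{\oplus n}$; then $\beta(a) = \beta(x)\mu(v)\beta(y)$ since $\beta$ is an algebra morphism restricting to $\mu$ on ${\mathfrak m}^{\oplus n}$ and to the identity on $U({\mathfrak n}^{\oplus n})$ (more precisely, $\beta$ on $U({\mathfrak n}^{\oplus n})\subset U({\mathfrak e}^{\oplus n})$ is the identity). Similarly for $b = x'v'y'$ we have $\beta(b) = \beta(x')\mu(v')\beta(y')$. Now compute both sides: $\beta(a)\,b = \beta(x)\,\mu(v)\,\beta(y)\cdot x'v'y'$, which is an element with a single ${\mathfrak m}$-factor $v'$ and a $U{\mathfrak n}$-element $\mu(v)$ sitting to its left; by the defining relation of ${\mathcal U}^{(n)}$ this equals $\beta(x)\,v\,\beta(y)x'\,\mu(v')\,y' = x v y \cdot \beta(x')\mu(v')\beta(y') = a\,\beta(b)$. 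The bookkeeping is: $\beta$ is the identity on the $U({\mathfrak n}^{\oplus n})$-factors, so $\beta(x)=x$, $\beta(y)=y$ inside ${\mathcal U}^{(n)}$, and the relation $x\mu(u)yvz = xuy\mu(v)z$ is applied once with the roles $(u,v)\leftrightarrow(v,v')$ and $(x,y,z)\leftrightarrow(\beta(x),\beta(y)x',y')$ to transfer the $\mu$ from the left of $v'$ onto $v$.

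The main obstacle I anticipate is not conceptual but organizational: one must be careful that a general element of ${\mathcal U}^{(n)}$ is a \emph{sum} of generators $xvy$ (there is in general no way to write it with a single ${\mathfrak m}$-factor, and $A_n$ is defined as the span of such words), so the argument should first be run for single generators and then extended by bilinearity; and one must check that the manipulation is well-defined modulo the relations, i.e. that replacing $a$ or $b$ by a relation-equivalent representative does not change either side, which follows because both $\beta(-)b$ and $a\beta(-)$ are built from the algebra structure of $U({\mathfrak n}^{\oplus n})$ together with exactly the relation that is being quotiented out. A secondary point to verify is the compatibility of $\beta$ with the passage from $A_n$ to ${\mathcal U}^{(n)}$ — that is, that $\beta$ descends to ${\mathcal U}^{(n)}$, or rather that the expressions $\beta(a)b$ and $a\beta(b)$ only depend on the class of $a$ — which again is immediate from $\beta$ intertwining $\mu$ on ${\mathfrak m}^{\oplus n}$ with the inclusion of $\mu({\mathfrak m}^{\oplus n})$ into $U({\mathfrak n}^{\oplus n})$. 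Once these well-definedness issues are dispatched, the identity itself is the one-line application of the defining relation described above, and is essentially Lemma 27 of \cite{CirMar}, whose proof we are reproducing.
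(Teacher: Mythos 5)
Your proposal is correct and is essentially the intended argument: the paper itself gives no proof of this lemma (it simply imports it as Lemma 27 of \cite{CirMar}), and your reduction to generators $xvy$, using that $\beta$ restricts to the identity on $U({\mathfrak n}^{\oplus n})$ and to $\mu$ on ${\mathfrak m}^{\oplus n}$, followed by a single application of the defining relation $x\mu(u)yvz=xuy\mu(v)z$, is exactly the standard proof. The only (harmless) quibble is that your ``degenerate case'' of elements of $U({\mathfrak n}^{\oplus n})$ does not actually arise, since every element of $A_n$ is by definition a sum of words containing exactly one ${\mathfrak m}^{\oplus n}$-factor.
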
       

Finally, let us introduce an action of ${\mathfrak m}$ on ${\mathfrak n}\otimes{\mathfrak n}$:
For all $a\in{\mathfrak m}$ and all $r=\sum_is_i\otimes t_i\in{\mathfrak n}\otimes{\mathfrak n}$,
we set
$$a\cdot r\,=\,-\sum_i (s_i\cdot a)\otimes t_i +s_i\otimes(t_i\cdot a),$$
where $s_i\cdot a$ is the action of ${\mathfrak n}$ on ${\mathfrak m}$ stemming from the 
crossed module $\mu:{\mathfrak m}\to{\mathfrak n}$. This definition is consistent with
seeing all elements as elements of the semi-direct product and acting by the bracket.   

Now we are ready for the main definition:

\begin{defi}  \label{quasiinvariant_tensor}
A quasi-invariant tensor in $\mu:{\mathfrak m}\to{\mathfrak n}$ is a triple $(r,c,\xi)$
where:
\begin{enumerate}
\item[(a)] $r=\sum_qs_q\otimes t_q$ is a symmetric tensor (i.e. 
$\sum_qs_q\otimes t_q=\sum_qt_q\otimes s_q$);
\item[(b)] $\xi:{\mathfrak n}\to({\mathfrak n}\otimes{\mathfrak m})\oplus
({\mathfrak m}\otimes{\mathfrak n})$ is a linear map, whose image is symmetric, i.e.
$$\xi(X)\,=\,\sum_a\xi_a(X)\mu_a'\otimes\mu_a''+\sum_a\xi_a(X)\mu_a''\otimes\mu_a'$$
with $\mu_a'\in{\mathfrak n}$ and $\mu_a''\in{\mathfrak m}$. 
\item[(c)] $c$ is an ${\mathfrak n}$-invariant element in $\ker(\mu)\subset{\mathfrak m}$. 
\end{enumerate}
These data is supposed to satisfy the following conditions:
\begin{enumerate}
\item[(a)] $X\cdot r\,=\,\beta(\xi(X))$ for all $X\in{\mathfrak n}$,
\item[(b)] $u\cdot r\,=\,\xi(\mu(u))$ for all $u\in {\mathfrak m}$,
\item[(c)] $\xi([X,Y])\,=\,X\cdot\xi(Y)-Y\cdot\xi(X)$ for all $X,Y\in{\mathfrak n}$.
\end{enumerate}
\end{defi}

Cirio and Martins show in \cite{CirMar}, Theorem 31, how to construct from 
a quasi-invariant tensor a 
totally symmetric infinitesimal 2-braiding in some strict linear 2-category ${\mathcal C}_{\mu}$ 
associated with the crossed module $\mu:{\mathfrak m}\to{\mathfrak n}$.      
This is seen as a categorification of the infinitesimal braiding in the category of 
${\mathfrak n}$-modules which is associated to an r-matrix for ${\mathfrak n}$, i.e. a 
symmetric ${\mathfrak n}$-invariant tensor $r\in{\mathfrak n}\otimes{\mathfrak n}$.

Cirio and Martins go on constructing a quasi-invariant tensor for a certain crossed module
corresponding to the Cartan cocycle of ${\mathfrak s}{\mathfrak l}_2(\C)$. 
Let us show here how one may construct in general a quasi-invariant tensor using the 
abelian representatives of the equivalence class of the crossed module 
$\mu:{\mathfrak m}\to{\mathfrak n}$ which we have constructed earlier. 

\subsection{Construction of quasi-invariant tensors} \label{construction}

Suppose given a crossed module of the form $\mu:V_2\to V_3\times_{\alpha}{\mathfrak g}$, i.e. 
it is spliced together from a short exact sequence of ${\mathfrak g}$-modules 
\begin{equation}   \label{***}
0\to V=V_1\stackrel{i}{\to} V_2 \stackrel{d}{\to} V_3\to 0,
\end{equation}
and an abelian extension $V_3\times_{\alpha}{\mathfrak g}$ of ${\mathfrak g}$ by $V_3$ 
via the cocycle 
$\alpha\in Z^2({\mathfrak g},V_3)$, see Theorem \ref{construction_thm}. 
We will always suppose that $V=V_1=\C$ is the trivial 1-dimensional ${\mathfrak g}$-module.
Observe that this means that there is an element $1_{V_2}\in V_2$ which generates the trivial 
${\mathfrak g}$-submodule $i(\C)$ and the kernel of $d$. 
We denote by $Q:V_3\to V_2$ a linear section of the quotient map $d$. 
We write 
$\overline{X}=(0,X)\in V_3\times_{\alpha}{\mathfrak g}$ for all $X\in{\mathfrak g}$ and   
$\overline{h}=(h,0)\in V_3\times_{\alpha}{\mathfrak g}$ for all $h\in V_3$.
With these notations, we have for example $\mu(v)=\overline{d(v)}$ for all $v\in V_2$. 

Introduce further the cochains showing up when computing the image of $\alpha$ under the 
connecting homomorphism associated to the short exact sequence (\ref{***}). There is the 
2-cochain $\omega:\Lambda^2{\mathfrak g}\to V_2$ given for all $X,Y\in{\mathfrak g}$ 
by $Q(\alpha(X,Y))=w(X,Y)$, and further $\Phi:\Lambda^3{\mathfrak g}\to V_2$ given 
by $\Phi=d_{CE}\omega$. By definition of the connecting homomorphism (and Theorem 
\ref{construction_thm}), the 3-class $[\gamma]=\partial[\alpha]\in H^3({\mathfrak g};V)$ 
is represented by $\gamma$ such that for all $X,Y,Z\in{\mathfrak g}$, 
$i(\gamma(X,Y,Z))=\Phi(X,Y,Z)$. All this can be visualized in the following diagram:

\vspace{.3cm}
\hspace{2cm}\xymatrix{
 & \omega=Q\alpha \ar@{|-}[d] \ar@{|->}[r]^{d} & \alpha \ar@{|-}[d]
 \\
C^2(\mathfrak{g},V_1) \ar@{ >->}[r]^{i} \ar[d]^{d_{CE}} & C^2(\mathfrak{g},
V_2) \ar@{->>}[r]^{d} \ar@{-}[d]^{d_{CE}} & C^2(\mathfrak{g},V_3) 
\ar@{-}[d]^{d_{CE}} \\
C^3(\mathfrak{g},V_1) \ar@{ >->}[r]^{i}  & C^3(\mathfrak{g},V_2) 
\ar[d]\ar@{->>}[r]^{d}  & C^3(\mathfrak{g},V_3) \ar[d]\\
\partial\alpha=\gamma \ar@{|->}[r]^{i} & d_{CE}\omega=\Phi 
\ar@{|->}[r]^{d} & 0=d_{CE}\alpha}\vspace{.5cm}

The computation in Equation (80) in \cite{CirMar} goes through in this more general framework
to show that in ${\mathcal U}^{(2)}$:
$$1_{V_2}\otimes [\overline{X},\overline{Y}]\,=\,1_{V_2}\otimes\overline{[X,Y]}.$$

Consider now an r-matrix for ${\mathfrak g}$, i.e. a ${\mathfrak g}$-invariant, symmetric 
tensor $r\in{\mathfrak g}\otimes{\mathfrak g}$. We write its components as
$$r\,=\,\sum_is_i\otimes t_i.$$ 

Let us define the following {\it compatibility condition}:
We have for all $X,Y\in{\mathfrak g}$
\begin{equation}                \label{compatibility}
\sum_i\Phi(s_i,X,Y)\otimes t_i\,=\,1_{V_2}\otimes[X,Y],\,\,\,\,{\rm and}\,\,\,\,     
\sum_is_i\otimes\Phi(t_i,X,Y)\,=\,[X,Y]\otimes 1_{V_2}.
\end{equation}

Now in order to construct a quasi-invariant tensor, we consider the following objects:

\begin{enumerate}
\item[(a)] $\overline{r}=\sum_i\overline{s}_i\otimes\overline{t}_i$ the lift of $r$ to 
$V_3\times_{\alpha}{\mathfrak g}=:E$;
\item[(b)] the map $\xi:E\to (E\otimes V_2)\oplus(V_2\otimes E)$ defined as $\xi=-\xi_0-C$
where 
\begin{enumerate}
\item[($\alpha)$] $\xi_0(\overline{X})\,=\,\sum_i\omega(s_i,X)\otimes \overline{t}_i+
\overline{s}_i\otimes \omega(t_i,X)$,
\item[($\beta$)] $\xi_0(\overline{h})\,=\,\sum_i(s_i\cdot Q(h))\otimes\overline{t}_i
+\overline{s}_i\otimes(t_i\cdot Q(h))$, and 
\item[($\gamma$)] $C(h,X)\,=\,1_{V_2}\otimes\overline{X}+\overline{X}\otimes 1_{V_2}$.
\end{enumerate}
\item[(c)] $c\in\ker(\mu)=\C\subset V_2$ (via the embedding $i$).
\end{enumerate}

\begin{theo}\label{theorem-compat-implies-quasi-invariance}
Suppose that in the above situation the compatibility condition
(\ref{compatibility}) is satisfied. 

Then the triple $(\overline{r},\xi,c)$ is a quasi-invariant tensor for the crossed module 
$\mu:V_2\to V_3\times_{\alpha}{\mathfrak g}$.
\end{theo}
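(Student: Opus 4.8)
The strategy is to verify the three axioms of Definition \ref{quasiinvariant_tensor} one by one for the triple $(\overline{r},\xi,c)$, reducing each to the defining data of the crossed module $\mu:V_2\to V_3\times_{\alpha}{\mathfrak g}$ together with the compatibility condition (\ref{compatibility}). First one checks the easy structural hypotheses: $\overline{r}$ is symmetric because $r$ is, and the image of $\xi=-\xi_0-C$ is symmetric by inspection of formulas $(\alpha)$, $(\beta)$, $(\gamma)$, since each of the three pieces is manifestly symmetric in its two tensor slots; and $c\in\ker(\mu)=\C$ is $\mathfrak g$-invariant because $\ker(\mu)=V_1=\C$ is the trivial module, so there is in fact nothing to prove for item (c) of the hypotheses and the three conditions (a),(b),(c) on the triple remain.

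For condition (a), $X\cdot\overline{r}=\beta(\xi(\overline{X}))$ for $X\in E$, one splits $X$ into its $\overline{\mathfrak g}$-part and its $\overline{V_3}$-part (using $\overline{X}=(0,X)$, $\overline{h}=(h,0)$). On the $\overline{\mathfrak g}$-part, I would compute the left side $X\cdot\overline{r}=-\sum_i(\overline{s}_i\cdot X)\otimes\overline{t}_i+\overline{s}_i\otimes(\overline{t}_i\cdot X)$ using the crossed-module action of $E$ on $V_2$; here the key point is that this action, restricted appropriately, is governed by $\omega=Q\alpha$ and by the $\mathfrak g$-action, matching the definition of $\xi_0(\overline{X})$ after applying $\beta$ (recall $\beta\circ\mu=d$, $\beta|_{\mathfrak n}=\mathrm{id}$), while the extra term $C$ contributes the correction $1_{V_2}\otimes\overline X+\overline X\otimes 1_{V_2}$ whose image under $\beta$ vanishes since $\beta(1_{V_2})=\beta(i(1))=0$. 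On the $\overline{V_3}$-part one similarly matches against $\xi_0(\overline h)$. Condition (b), $u\cdot\overline{r}=\xi(\mu(u))$ for $u\in V_2$, is handled by the same bookkeeping: write $u=1_{V_2}\lambda$-component plus a complement, use $\mu(u)=\overline{d(u)}$, and observe that the $1_{V_2}$-part acts trivially on $\overline r$ (as $1_{V_2}\in\ker\mu$ acts via $\mu(1_{V_2})=0$) which is exactly compensated by the fact that $C$ is pulled back through $d$ trivially.

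The crux is condition (c), the cocycle identity $\xi([X,Y])=X\cdot\xi(Y)-Y\cdot\xi(X)$ for $X,Y\in E$. Expanding $\xi=-\xi_0-C$, the $\xi_0$-terms reassemble, via the Chevalley--Eilenberg differential, into an expression involving $\Phi=d_{CE}\omega$ evaluated on $(s_i,X,Y)$ and $(t_i,X,Y)$ — this is where the identity $d_{CE}\omega=\Phi$ and the $\mathfrak g$-equivariance $\omega(s_i\cdot(-),-)$ type manipulations enter, together with the $\mathfrak g$-invariance of $r$ (so that $\sum_i s_i\cdot a\otimes t_i+s_i\otimes t_i\cdot a=0$ after suitable reindexing). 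The leftover obstruction terms are precisely $\sum_i\Phi(s_i,X,Y)\otimes t_i$ and $\sum_i s_i\otimes\Phi(t_i,X,Y)$, and the compatibility condition (\ref{compatibility}) rewrites them as $1_{V_2}\otimes[X,Y]$ and $[X,Y]\otimes 1_{V_2}$, which is exactly what is needed to cancel against the $C$-terms $C([X,Y])$ versus $X\cdot C(Y)-Y\cdot C(X)$, using the identity $1_{V_2}\otimes[\overline X,\overline Y]=1_{V_2}\otimes\overline{[X,Y]}$ in ${\mathcal U}^{(2)}$ quoted from Equation (80) of \cite{CirMar}. I expect the main obstacle to be organizing this last cancellation cleanly: one must carry the computation in ${\mathcal U}^{(2)}$ rather than in a naive tensor product, invoke Lemma \ref{(66)} ($\beta(a)b=a\beta(b)$) to move $\beta$'s across, and track the placement of $1_{V_2}$ carefully so that the $\mathfrak g$-action on $1_{V_2}\otimes\overline{[X,Y]}$ and the Chevalley--Eilenberg terms align; the compatibility condition is exactly the hypothesis that makes the two halves of condition (c) — the $E\otimes V_2$ half and the $V_2\otimes E$ half — close up.
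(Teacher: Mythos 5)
Your plan follows the paper's own proof essentially step for step: verify conditions (a)--(c) of Definition \ref{quasiinvariant_tensor} directly, with (a) and (b) reducing to $\mu(1_{V_2})=0$ and the $\mathfrak{g}$-invariance of $\ker(d)=\C$, and with (c) organized as a cancellation between the $\xi_0$-part (which, after the $\alpha$-terms are killed via Lemma \ref{(66)} and the remaining terms via invariance of $r$, leaves exactly the $\Phi$-terms that the compatibility condition (\ref{compatibility}) converts into $1_{V_2}\otimes\overline{[X,Y]}+\overline{[X,Y]}\otimes 1_{V_2}$) and the $C$-part. The only point left implicit in your sketch that the paper treats separately is condition (c) on mixed brackets $[\overline{X},\overline{h}]$, which goes through by the same mechanism.
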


The proof of the theorem is a simply adaptation of the proof 
of Theorem 35 in \cite{CirMar}: 

\pr We need to check conditions {\it (a)-(c)} in Definition \ref{quasiinvariant_tensor}. 
We start with condition {\it (a)}:
\begin{eqnarray*}
\overline{X}\cdot\overline{r} &=& \sum_i\big((\alpha(X,s_i),[X,s_i])\big)\otimes \overline{t}_i+
\sum_i\overline{s}_i\otimes\big((\alpha(X,t_i),[X,t_i])\big) \\
&=& \sum_i\overline{\alpha(X,s_i)}\otimes\overline{t}_i+ \sum_i\overline{s}_i\otimes
\overline{\alpha(X,t_i)} \\
&=& -\beta(\xi_0(\overline{X}))\,=\,\beta(\xi(\overline{X})),
\end{eqnarray*}
where we have used in this order the definition of the bracket in the abelian 
extension, the invariance of $r$, the relation of $\alpha$ to $\omega$ (more precisely, 
$$\beta(\omega(s_i,X)\otimes \overline{t}_i)=\mu(\omega(s_i,X))\otimes\overline{t}_i=
-\overline{\alpha(X,s_i)}\otimes\overline{t}_i\hspace{.5cm}),$$ 
and finally that $\mu(1_{V_2})=0$. 

\begin{eqnarray*}
\overline{h}\cdot\overline{r} &=& - \sum_i\big( \overline{s_i\cdot h}\otimes \overline{t}_i+
\overline{s}_i\otimes\overline{t_i\cdot h} \big) \\
&=& - \sum_i\beta\big(  (s_i\cdot Q(h))\otimes \overline{t}_i+
\overline{s}_i\otimes (t_i\cdot Q(h))\big)\,=\,-\beta(\xi_0(\overline{h}))\,=\,
\beta(\xi(\overline{h})), 
\end{eqnarray*}
where we have used in this order the fact that the bracket in the semi-direct product is here 
just given by the action, then using axiom {\it (a)} of a crossed module
$$\beta((s_i\cdot Q(h)\otimes \overline{t}_i)\,=\,\mu(s_i\cdot Q(h))\otimes \overline{t}_i
\,=\,[s_i,h]\otimes \overline{t}_i\,=\,(s_i\cdot h)\otimes \overline{t}_i,$$
 and finally the definitions of $\xi_0$ and $\xi$.

Now let us verify condition {\it (b)}: For all $a\in V_2$, we have 
$$a\cdot\overline{r}-\xi(\mu(a))\,=\,\sum_i (s_i\cdot(Q(d(a))-a))\otimes \overline{t}_i+
\overline{s}_i\otimes (t_i\cdot(Q(d(a))-a)),$$
and this vanishes since $Q(d(a))-a$ is in $\ker(d)=\C$ which is ${\mathfrak g}$-invariant. 

Finally, we verify condition {\it (c)}: For this, we start by computing:

\begin{eqnarray*}
\xi_0([\overline{X},\overline{Y}])&=&\xi_0(\overline{[X,Y]})-\xi_0(\overline{\alpha(X,Y)}) \\
&=&\sum_i\big(\omega(s_i,[X,Y])\otimes\overline{t}_i+\overline{s}_i\otimes \omega(t_i,[X,Y]) + \\
&+&(s_i\cdot Q\alpha(X,Y))\otimes \overline{t}_i+\overline{s}_i\otimes(t_i\cdot Q\alpha(X,Y))\big)
\end{eqnarray*}

$$\overline{X}\cdot\xi_0(\overline{Y})\,=\,\sum_i(X\cdot \omega(s_i,Y))\otimes \overline{t}_i
+\omega(s_i,Y)\otimes[\overline{X},\overline{t}_i]+[\overline{X},\overline{s}_i]\otimes \omega(t_i,Y)
+\overline{s}_i\otimes(X\cdot \omega(t_i,Y))$$

$$\overline{Y}\cdot\xi_0(\overline{X})\,=\,\sum_i(Y\cdot \omega(s_i,X))\otimes \overline{t}_i
+\omega(s_i,X)\otimes[\overline{Y},\overline{t}_i]+[\overline{Y},\overline{s}_i]\otimes \omega(t_i,X)
+\overline{s}_i\otimes(Y\cdot \omega(t_i,X))$$

We now claim that 
\begin{eqnarray*}
\xi_0([\overline{X},\overline{Y}])-\overline{X}\cdot\xi_0(\overline{Y})+
\overline{Y}\cdot\xi_0(\overline{X})&=&\sum_i\Phi(s_i,X,Y)\otimes\overline{t}_i+
\overline{s}_i\otimes\Phi(t_i,X,Y) \\
&=& \overline{[X,Y]}\otimes 1_{V_2}+1_{V_2}\otimes\overline{[X,Y]}.
\end{eqnarray*}
where we used the compatibility condition (\ref{compatibility}) to rewrite the last expression.

Reminding that $\Phi=d_{CE}\omega$, we obtain
\begin{eqnarray*}
\xi_0([\overline{X},\overline{Y}])-\overline{X}\cdot\xi_0(\overline{Y})+
\overline{Y}\cdot\xi_0(\overline{X}) -\sum_i\Phi(s_i,X,Y)\otimes\overline{t}_i+
\overline{s}_i\otimes\Phi(t_i,X,Y) = \\
= \sum_i -\omega(s_i,Y)\otimes[\overline{X},\overline{t}_i]-[\overline{X},\overline{s}_i]\otimes
\omega(t_i,Y)+\omega(s_i,X)\otimes[\overline{Y},\overline{t}_i]+[\overline{Y},\overline{s}_i]\otimes 
\omega(t_i,X)+\\
-\omega(X,[Y,s_i])\otimes\overline{t}_i-\omega(Y,[s_i,X])\otimes\overline{t}_i-
\overline{s}_i\otimes \omega(X,[Y,t_i])-\overline{s}_i\otimes \omega(Y,[t_i,X]).
\end{eqnarray*}

We now expand each term of the form $[\overline{X},\overline{t}_i]$ into
$\overline{\alpha(X,t_i)}+\overline{[X,t_i]}$. The terms involving the cocycle $\alpha$ are:
$$\sum_i -\omega(s_i,Y)\otimes\overline{\alpha(X,t_i)}-\overline{\alpha(X,s_i)}\otimes \omega(t_i,Y)
+\omega(s_i,X)\otimes\overline{\alpha(Y,t_i)}+ \overline{\alpha(Y,s_i)}\otimes \omega(t_i,X)$$
These terms cancel pairwise using Lemma \ref{(66)} and the definition of $\omega$. For example:
$$\omega(s_i,Y)\otimes\overline{\alpha(X,t_i)}\,=\,\omega(s_i,Y)\otimes\mu(Q(\alpha(X,t_i)))\,=\,
\mu(\omega(s_i,Y))\otimes \omega(X,t_i)\,=\,\overline{\alpha(s_i,Y)}\otimes \omega(X,t_i).$$

The remaining eight terms not containing $\alpha$ cancel pairwise thanks to 
the invariance property of $r$. Namely, applying the linear map 
$\omega(Y,-)\otimes{\rm id}$ to the invariance relation
$$s_i\otimes\overline{[X,t_i]}+[X,s_i]\otimes\overline{t}_i,$$
we obtain (using the anti-symmetry of $\omega$)
$$-\omega(s_i,Y)\otimes\overline{[X,t_i]}-\omega(Y,[s_i,X])\otimes\overline{t}_i.$$
This proves the claim. We go on with:
\begin{eqnarray*}
C([\overline{X},\overline{Y}])-\overline{X}\cdot C(\overline{Y})+
Y\cdot C(\overline{X})\,=\,C(\alpha(X,Y),\overline{[X,Y]})-
\overline{X}\cdot C(\overline{Y})+Y\cdot C(\overline{X})\,=\,\\
\,=\,-1_{V_2}\otimes\overline{[X,Y]}-\overline{[X,Y]}\otimes 1_{V_2}.
\end{eqnarray*}
Thus summing over all contributions proves condition {\it (c)} for 
$\xi([\overline{X},\overline{Y}])$.  

Now we show condition {\it (c)} for $\xi([\overline{X},\overline{h}])$. We start by
computing:

$$\xi_0([\overline{X},\overline{h}])\,=\,\xi_0(\overline{X\cdot h})\,=\,
\sum_i (s_i\cdot Q(\overline{X\cdot h}))\otimes \overline{t}_i+\overline{s}_i\otimes
(t_i\cdot Q(\overline{X\cdot h})),$$

$$\overline{X}\cdot\xi_0(\overline{h})\,=\,\sum_i(\overline{X}\cdot(s_i\cdot Q(h)))\otimes 
\overline{t}_i+(s_i\cdot Q(h))\otimes[\overline{X},\overline{t}_i]+
[\overline{X},\overline{s}_i]\otimes(t_i\cdot Q(h))
+\overline{s}_i\otimes(X\cdot (t_i\cdot Q(h))),$$

$$\overline{h}\cdot\xi_0(\overline{X})\,=\,\sum_i-\omega(s_i,X)\otimes(\overline{t_i\cdot h})
-(\overline{s_i\cdot h})\otimes \omega(t_i,X).$$

Observe that $X\cdot(Y\cdot Q(h))\,=\,X\cdot(Q(Y\cdot h))$; i.e. $Q$ is 
${\mathfrak g}$-equivariant when subject to a ${\mathfrak g}$-action. This 
follows from
$$\mu(Y\cdot Q(h)-Q(Y\cdot h))\,=\,Y\cdot(\mu(Q(h)))-Y\cdot h \,=\,0,$$
because $\ker[\mu)$ is ${\mathfrak g}$-invariant.  

All this is used to write

\begin{eqnarray*}
\xi_0([\overline{X},\overline{h}]-\overline{X}\cdot\xi_0(\overline{h})+
\overline{h}\cdot\xi_0(\overline{X})\,=\,   \\
\,=\,\sum_i -([X,s_i]\cdot Q(h))\otimes\overline{t}_i-\overline{s}_i\otimes([X,t_i]\cdot Q(h))
-(s_i\cdot Q(h))\otimes[\overline{X},\overline{t}_i]- \\
\,\,[ \overline{X},\overline{s}_i ] 
\otimes (t_i \cdot Q(h)) - \omega(s_i,X)\otimes(\overline{t_i\cdot h})
-(\overline{s_i\cdot h})\otimes \omega(t_i,X) \\
\,=\,\sum_i\big(-([X,s_i]\cdot Q(h))\otimes\overline{t}_i+(s_i\cdot Q(h))\otimes
\overline{[X,t_i]}\big) + \\
- \big( \overline{s}_i\otimes([X,t_i]\cdot Q(h))+
\overline{[X,s_i]}\otimes(t_i\cdot Q(h))\big)+ \\
\,-\,(s_i\cdot Q(h))\otimes\overline{\alpha(X,t_i)}
-\overline{\alpha(X,s_i)}\otimes(t_i\cdot Q(h))-
\omega(s_i,X)\otimes\overline{t_i\cdot h}-\overline{s_i\cdot h}\otimes \omega(t_i,X).
\end{eqnarray*}

In the two big parentheses, we use as before the invariance of $r$, and therefore 
they vanish. The four terms in the last line cancel pairwise, using once again Lemma 
\ref{(66)}. This completes the proof.\fin 

At last, let us address the question for which crossed modules 
$\mu:V_2\to V_3\times_{\alpha}{\mathfrak g}$ and which r-matrices $r$, the compatibility 
condition is fulfilled. Observe that one example 
(i.e. ${\mathfrak g}={\mathfrak s}{\mathfrak l}_2(\C)$)
is certainly the one given in \cite{CirMar}. This holds more generally for finite 
dimensional metric Lie algebras, thanks to the following

\begin{prop}\label{sec:constr-quasi-invar-proposition-metric-lie-compat-cond}
Let $(\mathfrak{g},\kappa)$ be a finite dimensional metric Lie algebra of dimension $n$,
$x_*:=(x_i)_{1\leq i\leq n}$ be a basis of $\mathfrak{g}$ and denote by $x^*:=(x^i)_{1\leq i\leq n}$
the $\kappa$-orthonormal basis to $x_*$, which is uniquely defined by requiring that
\[
\kappa(x_i,x^j)\,=\,\delta_{i,j}\quad,\: 1\leq i,j\leq n,
\] 
where $\delta_{i,j}$ denotes the Kronecker symbol.
Then, the $2$-tensor $r(x_*)$ defined by
\[
r(x_*):=\frac{1}{2}\sum_{i=1}^n (x_i\otimes x^i+x^i\otimes x_i) \:\in \mathfrak{g}\otimes \mathfrak{g}
\]
is an $r$-matrix for $\mathfrak{g}$ which satisfies the compatibility condition 
(\ref{compatibility}) of Cirio-Martins with respect to the Cartan cocycle 
$\kappa([-,-],-)$. Moreover, $r(x_*)$ does not depend on the choice of the basis 
$x_*$ and will be written $r_{\kappa}$ in the sequel.
\end{prop}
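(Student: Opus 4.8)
The plan is to reduce the whole statement to two standard facts about a metric Lie algebra $(\mathfrak{g},\kappa)$: the invariance of the quadratic Casimir, and the reconstruction identities $v=\sum_k\kappa(v,x^k)x_k=\sum_k\kappa(v,x_k)x^k$, valid for every $v\in\mathfrak{g}$.

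First I would dispatch the ``$r$-matrix'' part. Symmetry of $r(x_*)$ is immediate from the definition. For basis-independence, observe that $(x_j)$ is precisely the $\kappa$-dual basis of $(x^j)$, whence $\sum_k x_k\otimes x^k=\sum_k x^k\otimes x_k$ and therefore $r(x_*)=\sum_k x_k\otimes x^k$; this is the image of $\kappa^{-1}$ under the isomorphism $\mathfrak{g}^*\otimes\mathfrak{g}^*\cong\mathfrak{g}\otimes\mathfrak{g}$ induced by $\kappa$ (equivalently, a one-line change-of-basis computation gives $\sum_k y_k\otimes y^k=\sum_k x_k\otimes x^k$ for any basis $y_*$), so it depends only on $\kappa$. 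For $\mathfrak{g}$-invariance, expand $[X,x_k]=\sum_j\kappa([X,x_k],x^j)x_j$ and $[X,x^k]=\sum_j\kappa([X,x^k],x_j)x^j$; the coefficient of $x_j\otimes x^k$ in $X\cdot r(x_*)=\sum_k\big([X,x_k]\otimes x^k+x_k\otimes[X,x^k]\big)$ is then $\kappa([X,x_k],x^j)+\kappa([X,x^j],x_k)$, which vanishes by ad-invariance of $\kappa$.

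Next I would pin down the cochain $\Phi$. Since $\Phi=d_{CE}\omega$ with $d\omega=\alpha$ and $\alpha\in Z^2(\mathfrak{g},V_3)$, we get $d\Phi=d_{CE}\alpha=0$, so $\Phi$ lands in $\ker(d)=i(\C)=\C\cdot 1_{V_2}$, and by the construction of the connecting homomorphism, $i(\gamma(X,Y,Z))=\Phi(X,Y,Z)$. Taking (as the statement prescribes) an abelian representative whose associated $3$-cocycle is exactly equal to the Cartan cocycle --- for instance the one produced in Section 4.2 --- this becomes
$$\Phi(X,Y,Z)\,=\,\kappa([X,Y],Z)\,1_{V_2}\qquad(X,Y,Z\in\mathfrak{g}),$$
an expression that is totally skew in $X,Y,Z$ by invariance of $\kappa$.

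Finally I would check (\ref{compatibility}) by direct substitution. Using the decomposition $r_\kappa=\sum_k x_k\otimes x^k$,
$$\sum_k\Phi(x_k,X,Y)\otimes x^k\,=\,1_{V_2}\otimes\Big(\sum_k\kappa([x_k,X],Y)\,x^k\Big)\,=\,1_{V_2}\otimes\Big(\sum_k\kappa(x_k,[X,Y])\,x^k\Big)\,=\,1_{V_2}\otimes[X,Y],$$
the middle step being ad-invariance of $\kappa$ and the last step the reconstruction identity. Reading $r_\kappa$ instead as $\sum_k x^k\otimes x_k$ and contracting on the other tensor factor yields the second identity of (\ref{compatibility}) in the same way, and Theorem \ref{theorem-compat-implies-quasi-invariance} then finishes the proof. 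I do not expect a genuine obstacle here: the only points needing a moment's care are that the symmetrized and unsymmetrized forms of $r_\kappa$ agree (so one may freely use whichever decomposition $r_\kappa=\sum s_i\otimes t_i$ is convenient), and that $\Phi$ really is the Cartan cocycle times $1_{V_2}$, which depends on having fixed the abelian representative compatibly --- the substantive work, namely setting up $\Phi$ and its relation to the $3$-cocycle, being already available from the earlier sections.
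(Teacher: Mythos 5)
Your proposal is correct and follows essentially the same route as the paper: the reconstruction identities $v=\sum_k\kappa(v,x^k)x_k=\sum_k\kappa(v,x_k)x^k$ together with ad-invariance of $\kappa$ give both the invariance of $r(x_*)$ and the compatibility condition (the paper averages the two identities where you invoke the equality of the symmetrized and unsymmetrized tensors, and it likewise reduces $\Phi$ to $\kappa([-,-],-)\cdot 1_{V_2}$ via $i(\gamma)=\Phi$), and basis-independence is the same change-of-basis computation on $\sum_k x_k\otimes x^k$ followed by symmetrization.
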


\pr
Let us first prove that $r(x_*)$ is an $r$-matrix. It is obviousely symmetric by 
definition, so it remains to show that it is invariant. This is the content of 
Proposition XVII.1.3 of \cite{Kas} in the case when $\kappa$ is the Killing form 
of a semi-simple Lie algebra, and the proof given there, even if it still works 
for an arbitrary metric Lie algebra, relies on the fact that the Casimir element 
metric Lie algebra is central in its universal enveloping algebra, which we don't 
need here (even if it also remains true for an arbitrary metric Lie algebra). 
Thus we propose here an alternative direct proof: First, observe that for any 
$z$ in $\mathfrak{g}$, the fact that $x_*$ is $\kappa$-orthonormal to $x^*$ implies that

\begin{equation}
z=\sum_{i=1}^n \kappa(z,x^i)x_i\label{eq:dec-orth-s}
\end{equation}
Moreover, since $\kappa$ is symmetric, we also have that

\begin{equation}
z=\sum_{i=1}^n \kappa(z,x_i)x^i\label{eq:dec-orth-t}
\end{equation}
Now let $g$ be an element of $\mathfrak{g}$. By definition of the diagonal action 
of $\mathfrak{g}$ on $\mathfrak{g}\otimes \mathfrak{g}$:
\[
g\cdot \left(\sum_{i=1}^nx_i\otimes x^i \right):=\sum_{i=1}^n \big([g,x_i]\otimes x^i+x_i\otimes [g,x^i]\big)
\]
Applying equations (\ref{eq:dec-orth-s}) with $z:=[g,x_i]$ and (\ref{eq:dec-orth-t}) 
with $z=[g,x^i]$ respectively to the first summand involving $[g,x_i]$'s  and the 
second summand involving $[g,x^i]$'s, we can rewrite the preceeding equation as 
\[
g\cdot \left(\sum_{i=1}^nx_i\otimes x^i \right)=\sum_{i=1}^n\sum_{j=1}^n 
\big(\kappa([g,x_i],x^j)x_j\otimes x^i+x_i\otimes \kappa([g,x^i],x_j)x^j\big)
\]
Using the invariance of $\kappa$ in the first summand and the fact that it 
is symmetric after having permuted $i$ with $j$ in the second, we get that
\[
g\cdot \left(\sum_{i=1}^nx_i\otimes x^i \right)=\sum_{i=1}^n\sum_{j=1}^n -
\big(\kappa(x_i,[g,x^j])x_j\otimes x^i+x_j\otimes \kappa(x_i,[g,x^j])x^i\big)\,=\,0
\]
since values of $\kappa$ can be moved from one side of the tensor product to 
the other as the tenor product is taken over $\mathbb{C}$. The same arguments apply 
to prove that
\[
g\cdot \left(\sum_{i=1}^nx^i\otimes x_i \right)=0
\]
Thus, $r(x_*)$ appears as the sum of two invariant terms i.e.
\[
g\cdot r(x_*)=g\cdot \left(\frac{1}{2}\sum_{i=1}^nx_i\otimes x^i +x^i\otimes x_i \right)=0+0=0
\]
for all $g$ in $\mathfrak{g}$, which shows that $r(x_*)$ is indeed $\mathfrak{g}$-invariant.\medskip

To establish that $r(x_*)$ fulfills the compatibility condition (\ref{compatibility}) 
with respect to the Cartan cocycle $\kappa([-,-],-)$, observe that for any crossed module \[
0\to \mathbb{C}\overset{i}{\to} \mathfrak{m}\to \mathfrak{n} \to \mathfrak{g}\to 0 
\]
representing the Cartan cocyle, the associated cochain $\Phi:\Lambda^3{\mathfrak g}\to \mathfrak{m}$ 
satisfies $\Phi(X,Y,Z)\,=\,i(\gamma(X,Y,Z))$. So, up to an identification of $\C$ with the 
subspace of $\mathfrak{m}$ generated by $1_{\mathfrak{m}}:=i(1_{\C})$, $\Phi$ is equal to the 
Cartan cocycle. Therefore, applying equation (\ref{eq:dec-orth-s}) to $z:=[X,Y]$ and the 
invariance and symmetry of $\kappa$, we get
\[
1_\mathfrak{m}\otimes [X,Y]=\sum_{i=1}^n 1_\mathfrak{m}\otimes \kappa([X,Y],x_i)x^i=\sum_{i=1}^n
\kappa([x_i,X],Y)\otimes x^i=\sum_{i=1}^n \Phi(x_i,X,Y)\otimes x^i.
\]
Similarly, using equation (\ref{eq:dec-orth-t}) instead of equation (\ref{eq:dec-orth-s}) leads to
\[
 1_{\mathfrak{m}}\otimes [X,Y] =\sum_{i=1}^n x_i\otimes \Phi(x^i,X,Y),
\]
which, averaged with the previous equation, gives
\[
1_\mathfrak{m}\otimes [X,Y]=\frac{1}{2}\sum_{i=1}^n\big(x_i\otimes \Phi(x^i,X,Y)+ x^i\otimes \Phi(x_i,X,Y)\big),
\]
 showing that $r(x_*)=\frac{1}{2}\sum_{i=1}^n\big(x_i\otimes x^i+x^i\otimes x_i\big)$ satisfies 
the left equation of the comptibility condition (\ref{compatibility}). The right 
equation can be obtained exactly in the same fashion.\medskip

The fact that $r_\kappa:=r(x_*)$ does not depend on the choice of $x_*$ is 
standard in the literature. The proof we propose here in the sake of self-containdeness 
is a straightforward adaptation to the arbitrary metric Lie algebra case of the one of 
Proposition XVII.1.2 of \cite{Kas}, in which it is shown that, a posteriori, the Casimir 
element of a semi-simple Lie algebra does not depend on the choice of the basis used to 
define it.

 If $y_*:=(y_i)_{1\leq i\leq n}$ is another basis of $\mathfrak{g}$ with orthonormal basis 
$y^*:=(y^i)_{1\leq i\leq n}$ we have, using equations (\ref{eq:dec-orth-s}) and 
(\ref{eq:dec-orth-t}) to express the $x_i$'s and the $x^i$'s respectively in the 
basis $y_*$ and $y^*$:
\begin{align*}
\sum_{i=1}^n x_i\otimes x^i=&\sum_{i=1}^n \sum_{1\leq j,k\leq n}\kappa(x_i,y^j)y_j\otimes \kappa(x^i,y_k)y^k\\
=&\sum_{1\leq j,k\leq n}\left(\sum_{i=1}^n\kappa(x_i,y^j) \kappa(x^i,y_k) \right)y_j\otimes y^k\\
\end{align*}
But for all $j$ and $k$ in $\{1,\cdots,n\}$, equation (\ref{eq:dec-orth-t}) applied to $z=y^j$ gives
\[
\sum_{i=1}^n\kappa(x_i,y^j) \kappa(x^i,y_k)=\kappa(\sum_{i=1}^n\kappa(x_i,y^j) x^i,y_k)=\kappa(y^j,y_k)=\delta_{jk}
\]
which shows that the previous equality can be rewritten as
\[
\sum_{i=1}^n x_i\otimes x^i=\sum_{1\leq j,k\leq n}\delta_{j,k} \:y_j\otimes y^k=\sum_{j=1}^n y_j\otimes y^j
\]
Symmetrizing both sides, we get 
\[
r(x_*)=r(y_*)
\]
so that the standard $r$-matrix of a metric algebra indeed does not depend on the choice 
of the basis used to define it.
\fin

\begin{cor}
Let ${\mathfrak g}$ be a semi-simple complex Lie algebra with Killing form $\kappa$.
Then the standard $r$-matrix of $\mathfrak{g}$ is the unique $r$-matrix of $\mathfrak{g}$ 
which both lifts its Casimir element and fulfills the compatibility condition 
(\ref{compatibility}) with respect to the Cartan cocycle.  
\end{cor}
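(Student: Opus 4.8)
The plan is to deduce existence directly from Proposition~\ref{sec:constr-quasi-invar-proposition-metric-lie-compat-cond} and to obtain uniqueness from the decomposition of $\mathfrak g$ into simple ideals together with Schur's lemma. For existence, Proposition~\ref{sec:constr-quasi-invar-proposition-metric-lie-compat-cond} already tells us that the standard $r$-matrix $r_\kappa=\frac12\sum_i(x_i\otimes x^i+x^i\otimes x_i)$ is a $\mathfrak g$-invariant symmetric tensor satisfying the compatibility condition~(\ref{compatibility}) with respect to the Cartan cocycle $\kappa([-,-],-)$; moreover its image under the multiplication map $U\!\mathfrak g\otimes U\!\mathfrak g\to U\!\mathfrak g$ is $\sum_i x_ix^i=:\Om$, the Casimir element of $(\mathfrak g,\kappa)$ (that the symmetrization does not change this value follows from $\sum_i[x_i,x^i]=0$, which in turn comes from $\mathfrak g$-equivariance of the bracket map and $\mathfrak g^{\mathfrak g}=0$). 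So $r_\kappa$ does have both required properties.

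For uniqueness, write $\mathfrak g=\mathfrak g_1\oplus\cdots\oplus\mathfrak g_k$ with the $\mathfrak g_j$ simple ideals. The Killing form of $\mathfrak g$ is block diagonal for this decomposition and restricts on each block to the Killing form of $\mathfrak g_j$, so a $\kappa$-orthonormal basis of $\mathfrak g$ may be taken as the union of $\kappa_{\mathfrak g_j}$-orthonormal bases of the $\mathfrak g_j$; accordingly $\Om=\sum_j\Om_j$ and $r_\kappa=\sum_j r_{\kappa_{\mathfrak g_j}}$, the summands living in $U\!\mathfrak g_j$ and in $\mathfrak g_j\otimes\mathfrak g_j$ respectively. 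Now let $r$ be an arbitrary $r$-matrix of $\mathfrak g$. Projecting $r$ onto the summands $\mathfrak g_i\otimes\mathfrak g_j$ of $\mathfrak g\otimes\mathfrak g$ and using that for $i\neq j$ the $\mathfrak g$-invariants of $\mathfrak g_i\otimes\mathfrak g_j$ vanish (they equal $\mathfrak g_i^{\mathfrak g_i}\otimes\mathfrak g_j^{\mathfrak g_j}=0$), while for $i=j$ Schur's lemma applied to the irreducible $\mathfrak g_j$-module $\mathfrak g_j$ (self-dual via $\kappa_{\mathfrak g_j}$) shows the invariants of $\mathfrak g_j\otimes\mathfrak g_j$ to be one-dimensional and spanned by $r_{\kappa_{\mathfrak g_j}}$, one concludes that $r=\sum_j c_j\,r_{\kappa_{\mathfrak g_j}}$ for some scalars $c_j\in\C$.

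It then remains to see that the hypotheses force $c_j=1$ for every $j$. Since $r$ lifts the Casimir element, the multiplication map sends $r$ to $\sum_j c_j\Om_j$, which must equal $\Om=\sum_j\Om_j$; as $U\!\mathfrak g=U\!\mathfrak g_1\otimes\cdots\otimes U\!\mathfrak g_k$ the $\Om_j$ are linearly independent (project onto the factor $U\!\mathfrak g_j$), so $c_j=1$ for all $j$ and $r=r_\kappa$. Alternatively, and independently, the compatibility condition already pins down the $c_j$: evaluating the left identity of~(\ref{compatibility}) for $r=\sum_j c_j r_{\kappa_{\mathfrak g_j}}$ on a pair $X,Y\in\mathfrak g_j$ with $[X,Y]\neq 0$ gives $c_j\cdot 1_{V_2}\otimes[X,Y]=1_{V_2}\otimes[X,Y]$, whence $c_j=1$. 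Either way $r=r_\kappa$, which proves the corollary. The only step that is more than bookkeeping is the identification of the space of $r$-matrices of $\mathfrak g$ with $\bigoplus_j\C\,r_{\kappa_{\mathfrak g_j}}$, i.e.\ the reduction to simple ideals and the Schur-lemma count; beyond that I do not expect any genuine obstacle.
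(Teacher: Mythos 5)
Your argument is correct, but your uniqueness part takes a genuinely different route from the paper's. The paper proves uniqueness in one line, without ever invoking invariance or the compatibility condition: since the ideal presenting $U\!\mathfrak{g}$ as a quotient of the tensor algebra is non-homogeneous, PBW gives that the multiplication map is injective on symmetric $2$-tensors (the paper asserts injectivity on all of $\mathfrak{g}^{\otimes 2}$, which actually fails on the antisymmetric part whenever $\dim\Lambda^2\mathfrak{g}>\dim\mathfrak{g}$, but the symmetric part is all that is used since $r$-matrices are symmetric), so a symmetric lift of $\Omega$ to $\mathfrak{g}\otimes\mathfrak{g}$ is unique. You instead classify all $r$-matrices: decomposing $\mathfrak{g}$ into simple ideals and applying Schur's lemma to the adjoint representations, you identify $(\mathfrak{g}\otimes\mathfrak{g})^{\mathfrak{g}}=\bigoplus_j\mathbb{C}\,r_{\kappa_{\mathfrak{g}_j}}$, and then either hypothesis of the corollary forces every coefficient to equal $1$. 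Your route is longer and leans harder on semi-simplicity, but it buys a sharper statement: it shows that each of the two hypotheses is separately redundant --- the compatibility condition (\ref{compatibility}) alone already characterizes $r_\kappa$ among $r$-matrices, and so does the Casimir-lifting condition alone (the latter being exactly what the paper's PBW argument gives, even without invariance). Your existence half coincides with the paper's (both cite Proposition \ref{sec:constr-quasi-invar-proposition-metric-lie-compat-cond}), and your remark that $\sum_i[x_i,x^i]=0$, so that symmetrizing does not change the image in $U\!\mathfrak{g}$, is a correct detail the paper leaves implicit.
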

\pr
By definition, the Casimir element $\Omega$ of $\mathfrak{g}$ is the image in 
$U\!\mathfrak{g}$ of its standard $r$-matrix $r_\kappa\in \mathfrak{g}\otimes\mathfrak{g}$ 
under the multiplication map $U\!\mathfrak{g}^{\otimes 2}\to U\!\mathfrak{g}$ restricted to 
$\mathfrak{g}^{\otimes 2}$, i.e.
\[
\Omega=\frac{1}{2}\sum_{i=1}^n\big(x_ix^i+x_ix^i\big) \in U\!\mathfrak{g}
\]
for any basis $(x_i)_{1\leq i\leq n}$ with $\kappa$-orthonormal basis $(y_i)_{1\leq i\leq n}$. 
Thus, $r_\kappa$ is certainly a symmetric lift of $\Omega$ to $\mathfrak{g}^{\otimes 2}$ 
and Proposition \ref{sec:constr-quasi-invar-proposition-metric-lie-compat-cond} shows 
that it is invariant and fulfills the compatibility condition (\ref{compatibility}) 
with respect to the Cartan cocycle. Since the associative algebra $U\!\mathfrak{g}$ 
is the quotient of the free associative algebra $T\!\mathfrak{g}$ by a non homogenous 
ideal, the restriction of its multiplication to $\mathfrak{g}^{\otimes 2}$ is injective, 
which proves the unicity part of the statement.
\fin

\begin{cor}
 Let ${\mathfrak g}$ be a semi-simple complex Lie algebra and let the crossed module
$\mu:V_2\to V_3\times_{\alpha}{\mathfrak g}$ be any abelian representative for the 
(cohomology class represented by) Cartan cocycle $\kappa([-,-],)$. Then the triple 
$(\bar{r}_{\kappa},\xi,c)$ defined by dots $(a)$, $(b)$ and $(c)$ of section 
\ref{construction} is a quasi-invariant tensor for $\mu$.
\end{cor}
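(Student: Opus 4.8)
The plan is to obtain this Corollary as a straightforward combination of Theorem~\ref{theorem-compat-implies-quasi-invariance} with Proposition~\ref{sec:constr-quasi-invar-proposition-metric-lie-compat-cond}, so that the only work is to check that the hypotheses of Theorem~\ref{theorem-compat-implies-quasi-invariance} hold for the data at hand. First I would record that a semi-simple complex Lie algebra $\mathfrak g$ is in particular a metric Lie algebra, its Killing form $\kappa$ being non-degenerate; hence the standard $r$-matrix $r_\kappa=\frac12\sum_i(x_i\otimes x^i+x^i\otimes x_i)$ of Proposition~\ref{sec:constr-quasi-invar-proposition-metric-lie-compat-cond} is defined, is a symmetric $\mathfrak g$-invariant tensor, and is independent of the chosen basis. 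This is the $r$-matrix $r=\sum_is_i\otimes t_i$ to be fed into the construction of Section~\ref{construction}, producing $\overline{r}_\kappa$, $\xi$ and $c$ by points $(a)$, $(b)$ and $(c)$ there.

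The heart of the argument is the verification of the compatibility condition~(\ref{compatibility}) for the crossed module $\mu:V_2\to V_3\times_\alpha\mathfrak g$ with $r=r_\kappa$. By Section~3 the cohomology class attached to the string Lie algebra crossed module is exactly the class of the Cartan cocycle $\kappa([-,-],-)$, and by hypothesis $\mu$ is an abelian representative of this class; thus the cochain $\Phi=d_{C\!E}\omega$ with $\omega=Q\alpha$ satisfies $i\circ\gamma=\Phi$ for a $3$-cocycle $\gamma$ of $\mathfrak g$ with values in $\C$ representing the Cartan class, and under the identification of $\C$ with $\C\cdot 1_{V_2}=i(\C)\subset V_2$ the cochain $\Phi$ plays the role of $\kappa([-,-],-)$. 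Inserting this into the computation carried out in the proof of Proposition~\ref{sec:constr-quasi-invar-proposition-metric-lie-compat-cond} --- which only uses the $\mathfrak g$-invariance and symmetry of $r_\kappa$ together with the orthonormal-basis expansions~(\ref{eq:dec-orth-s}) and~(\ref{eq:dec-orth-t}) --- yields the two identities of~(\ref{compatibility}). Once these are known, Theorem~\ref{theorem-compat-implies-quasi-invariance} applies verbatim and asserts that $(\overline{r}_\kappa,\xi,c)$ is a quasi-invariant tensor for $\mu$, which is exactly the claim.

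The subtle point, and the one I expect to be the main obstacle, is that~(\ref{compatibility}) is a statement about the actual cochain $\Phi$ rather than about its cohomology class, while $\Phi$ depends on the auxiliary section $Q$ of $d\colon V_2\to V_3$ and a priori $\gamma$ only represents the Cartan class up to a coboundary. I would deal with this by noting that $\gamma$ may always be normalized to be the Cartan cocycle itself: replacing $Q$ by $Q+i\lambda$ for a linear map $\lambda\colon V_3\to\C$ changes $\gamma$ into $\gamma+d_{C\!E}(\lambda\alpha)$, and one uses this freedom --- exactly as in the concrete model of Section~4, where the abelian representative built from the Eulerian idempotent already has $\gamma$ equal to $\kappa([-,-],-)$ on the nose --- to arrange $\gamma=\kappa([-,-],-)$, after which no leftover term survives in the computation of the previous paragraph. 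With that normalization in place, the Corollary follows directly from Proposition~\ref{sec:constr-quasi-invar-proposition-metric-lie-compat-cond} and Theorem~\ref{theorem-compat-implies-quasi-invariance}.
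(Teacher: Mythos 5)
Your proposal follows the same route the paper intends: the corollary is an immediate combination of Proposition~\ref{sec:constr-quasi-invar-proposition-metric-lie-compat-cond} (the standard $r$-matrix $r_\kappa$ satisfies the compatibility condition~(\ref{compatibility}) with respect to the Cartan cocycle) with Theorem~\ref{theorem-compat-implies-quasi-invariance}, and your first two paragraphs reproduce exactly that argument correctly. The one place where you go beyond the paper is your third paragraph, and there your fix is asserted rather than proved: replacing $Q$ by $Q+i\circ\lambda$ only shifts $\gamma$ by coboundaries of the special form $d_{C\!E}(\lambda\circ\alpha)$, and you give no reason why an arbitrary discrepancy $\gamma-\kappa([-,-],-)\in B^3(\mathfrak g,\C)$ should lie in the image of the map $\lambda\mapsto d_{C\!E}(\lambda\circ\alpha)$; so ``one uses this freedom to arrange $\gamma=\kappa([-,-],-)$'' is not yet a proof. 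That said, you have correctly isolated a genuine imprecision: the compatibility condition~(\ref{compatibility}) is a statement about the cochain $\Phi$, not its class, and the paper itself silently assumes in the proof of Proposition~\ref{sec:constr-quasi-invar-proposition-metric-lie-compat-cond} that the $\gamma$ attached to the given abelian representative \emph{is} the Cartan cocycle on the nose rather than merely cohomologous to it. Under that reading of the hypothesis (which is how the corollary must be understood), your normalization paragraph is unnecessary and the rest of your argument is complete and matches the paper's.
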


\begin{rem}
\begin{enumerate}
\item[(a)] Observe that the short exact sequence considered earlier
$$0\to \C \to (U{\mathfrak g})^{\vee} \to (U{\mathfrak g}^+)^{\vee}\to 0$$
gives rise to an abelian representative of the crossed module corresponding to 
the Cartan cocycle of the form
\[
0\to \C\to (U{\mathfrak g})^{\vee} \to (U{\mathfrak g}^+)^{\vee}
\times_{\alpha}\mathfrak{g}\to\mathfrak{g}\to 0
\]
where $\alpha$ is the lift defined by formula (\ref{eq:formule-definition-alpha}). 
Recall from section \ref{construction} that the $\xi_0$ part of the quasi-invariant 
tensor associated to the standard $r$-matrix $r_\kappa=\sum_i s_i\otimes t_i$ of 
$\mathfrak{g}$ satisfies
\[
\xi_0(g)\,=\,\sum_i\big(\alpha(-,s_i,g)\otimes t_i+
s_i\otimes \alpha(-,t_i,g)\big)\:\:\in (U{\mathfrak g})^{\vee}\otimes 
\mathfrak{g}\:\oplus\: \mathfrak{g}\otimes (U{\mathfrak g})^{\vee}
\]
for all $g$ in $\mathfrak{g}\subset (U{\mathfrak g}^+)^{\vee}\oplus_\alpha\mathfrak{g}$. 
Thus $\xi_0(g)$ can be thought of as the (direct) sum $\xi_0^1(g)\oplus \xi_0^2(g)$ of 
two elements of $\text{\rm Hom}_\mathbb{C}(U\!\mathfrak{g},\mathfrak{g})$. Choosing 
$\mathfrak{g}:=\mathfrak{sl}_2(\mathbb{C})$ with the standard basis $\{X,Y,H\}$ 
defined above and $g:=Y$, the computation of $\alpha(XY,-,-)$ given in 
Example \ref{example2-calcul-alphaXY-X-Y-SL2} shows that
\[
\xi_0^1(Y)(XY)=\sum_i\alpha(XY,s_i,Y) t_i=\frac{1}{4}\alpha(XY,X,Y) Y=\frac{1}{3}Y
\]
and similarly
\[
\xi_0^2(Y)(XY)=\frac{1}{3}Y
\]  
where we have used that the standard $r$-matrix of $\mathfrak{sl}_2(\mathbb{C})$ 
is given by $r=\frac{1}{4}(X\otimes Y+Y\otimes X +\frac{1}{2}H\otimes H)$.
\item[(b)] Observe that the compatibility relation is also trivially satisfied 
for all abelian Lie algebras. Moreover, for abelian Lie algebras, the formula
(\ref{eq:formule-definition-alpha}) can be evaluated explicitly.  
 \end{enumerate}
\end{rem}

\end{document}